\newtheorem{theorem}{Theorem}[section]
\newtheorem{definition}{Definition}[section]
\newtheorem{lemma}{Lemma}[section]
\newtheorem{proposition}{Proposition}[section]
\newtheorem{remark}{Remark}[section]
\newtheorem{example}{Example}[section]
\def\C{\mathbb C}
\def\R{\mathbb R}
\def\F{\mathbb F}
\def\N{\mathbb N}
\def\Z{\mathbb Z}
\def\T{\mathbb T}
\def\D{\mathbb D}
\def\bff{{\boldsymbol f}}
\def\({{\rm (}}
\def\){{\rm )}}
\title{\bf Superoscillating sequences and
supershifts for families of generalized functions}
\author{F. Colombo\footnote{Politecnico di
Milano, Dipartimento di Matematica, Via E. Bonardi, 9 20133 Milano,
Italy},\  I. Sabadini$^*$,\ D.C. Struppa\footnote{The Donald Bren Presidential Chair in Mathematics, Chapman University, Orange, USA},
 A. Yger\footnote{IMB, Universit\'e de Bordeaux, 33405, Talence, France}}
\begin{document}
\numberwithin{equation}{section}
\maketitle

\begin{abstract}
We construct in this paper a large class of superoscillating sequences, more generally of $\mathscr F$-supershifts, where $\mathscr F$ is a family of smooth functions (resp. distributions, hyperfunctions) indexed by a real parameter $\lambda\in \R$.
The key model we introduce in order to generate such families is the evolution through a Schr\"odinger equation $(i\partial/\partial t - \mathscr H(x))(\psi)=0$ with a suitable hamiltonian $\mathscr H$, in particular a suitable potential $V$ when $\mathscr H(x) = -(\partial^2/\partial x^2)/2 + V(x)$. The family $\mathscr F$ is in this case $\mathscr F= \{(t,x) \mapsto \varphi_\lambda(t,x)\,;\, \lambda \in \R\}$, where $\varphi_\lambda$ is evolved from the initial datum $x\mapsto e^{i\lambda x}$. Then $\mathscr F$-supershifts will be of the form $\{\sum_{j=0}^N C_j(N,a) \varphi_{1-2j/N}\}_{N\geq 1}$ for $a\in \R\setminus [-1,1]$, taking $C_j(N,a) =\binom{N}{j}(1+a)^{N-j}(1-a)^j/2^N$. We prove the locally uniform convergence of derivatives of the supershift towards corresponding derivatives of its limit.
We analyse in particular the case of the quantum harmonic oscillator, which forces us, in order to take into account singularities of the evolved datum, to enlarge the notion of supershifts for families of functions to a similar notion for families of hyperfunctions, thus beyond the frame of distributions.
\end{abstract}

\noindent{\bf MSC numbers}: 42A16, 30D15, 46F15.

\section{Introduction}
The Aharonov-Berry superoscillations are  band-limited functions that  can oscillate faster than
their fastest Fourier component.
These functions (or sequences) appear in the study of Aharonov weak measurements, \cite{aav,abook,av,b5}.
The literature related to superoscillations is very large; without claiming completeness, we mention
 \cite{berry,berry-noise-2013,berry2,b1,b4,kempf2,leeferreira,kempf1,lindberg}.
Quite recently, this class of functions has been investigated from the mathematical point of view, see \cite{acsst4,acsst3,acsst1,acsst6,JFAA, AOKI,QS2, BCS,
harmonic,CSY,CSSY} and the monograph \cite{acsst5}. Their theory is now very well
developed, even though  there are still open problems associated with
 superoscillatory functions, in particular as it concerns their longevity, when evolved according to a wide class of partial differential equations.
\vskip 2mm
\noindent
Let $a>1$ be a real number. The archetypical superoscillatory sequence
is the sequence of complex valued functions $\{x\mapsto F_N(x,a)\}_{N\geq 1}$ defined on $\mathbb{R}$ by
\begin{equation}\label{sect1-eq1}
F_N(x,a)=\Big(\cos\Big(\frac{x}{N}\Big)+ia\sin \Big(\frac{x}{N}\Big) \Big)^N
=\sum_{j=0}^N C_j(N,a)e^{i(1-2j/N){x}}
\end{equation}
where
\begin{equation}\label{sect1-eq2}
C_j(N,a)={N\choose j}\left(\frac{1+a}{2}\right)^{N-j}\left(\frac{1-a}{2}\right)^j,
\end{equation}
and ${N\choose j}$ denotes the binomial coefficient.
The first thing one notices is that if we fix $x \in \mathbb{R}$, and we let $N$ go to infinity, we immediately obtain that
$$
\lim_{N \to \infty} F_N(x,a)=e^{iax}.
$$
This new representation, together with the computation of the limit of $x\mapsto F_N(x,a)$ when $N$ goes to infinity,
explains why such a sequence $\{x\mapsto F_N(x,a)\}_{N\geq 1}$ is called {\it superoscillatory}. Even though, for every $N$,
the frequencies $(1-2j/N)$ that appear in the Fourier representation of $F_N$ are bounded by one, the limit function is $x\mapsto e^{iax}$, where $a$ can be an arbitrarily large real number. If one considers the map
$\lambda \in \R \mapsto \varphi_\lambda$, where $\varphi_\lambda~: x\in \R \mapsto e^{i\lambda x}$, one says that
$\lambda \mapsto \{x \mapsto F_N(x,\lambda)\}_{N\geq 1}$ realizes a
{\it supershift} for $\lambda \mapsto \varphi_\lambda$, or also that
$\lambda \mapsto \varphi_\lambda$ admits $\lambda \mapsto
\{x\mapsto F_N(x,\lambda)\}_{N\geq 1}$ as a {\it supershift}.
Such a notion will be made precise in Definition \ref{sect5-def1}.
\vskip 2mm
\noindent
Let $t\in \R$ or $t\in \R^+$ be a real parameter and $P= \gamma_0 + \gamma_1 X  + \cdots + \gamma_d X^d \in \C[X]$ with $\gamma_d\not=0$. For any $\lambda \in \R$ and $N\in \N^*$, let
$$
\psi_{P,N}(t,x,\lambda) = \sum\limits_{j=0}^N C_j(N,\lambda)\, e^{i(1-2j/N)x} e^{it P(1-2j/N)},
$$
so that
\begin{equation*}
\Big[i\, \frac{\partial}{\partial t} +
\Big(\sum\limits_{\kappa =0}^d  \gamma_\kappa \Big(-i \frac{\partial}{\partial x}\Big)^\kappa\Big)\Big] (\psi_{P,N}(t,x,\lambda)) \equiv 0\quad {\rm on}\ \R^2_{t,x},
\end{equation*}
together with the initial condition
\begin{equation*}
\big[\psi_{P,N}(t,x,\lambda)\big]_{t=0}  = F_N(x,\lambda).
\end{equation*}
In the particular case where $P$ is even with real coefficients, namely $P = \sum_{\kappa'=0}^{d'} \gamma_{2\kappa'} X^{2\kappa'}$ with
$\gamma_{2\kappa'}\in \R$ for $\kappa'=0,...,d'$, $\gamma_{2d'}\neq 0$ and $\check P= \sum_{\kappa'=0}^{d'} (-1)^{\kappa'+1}\gamma_{2\kappa'} X^{2\kappa'}$, the function $(t,x)\in \R^2_{t,x}\longmapsto \psi_{P,N}(a,t,x)$ is the  global solution of the Schr\"odinger type partial differential equation
$\big(i\partial/\partial t - \check P(\partial/\partial x)\big)(\psi)\equiv 0$ in $\R^2_{t,x}$  evolved from the initial datum
$x\mapsto F_N(x,\lambda)$ on the line $\{t=0\}$ in $\R^2_{t,x}$.
\vskip 2mm
\noindent
Let $D_x := \partial/\partial x$ and $\odot$ denote the composition law between differential operators in the variable $x$ with coefficients depending on the parameter $t$. Observe then that one can rewrite formally for any $N\in \N^*$
\begin{multline*}
\psi_{P,N}(t,x,\lambda) = \sum\limits_{j=0}^N C_j(N,\lambda)\,
e^{ix(1-2j/N)}\, \prod_{\kappa =0}^d
\Big(\sum\limits_{\ell =0}^\infty
\frac{(i^{1-\kappa}\, t \gamma_\kappa)^\ell}{\ell!}
(i(1-2j/N))^{\kappa \ell}\Big) \\
= \sum\limits_{j=0}^N
C_j(N,\lambda) \, \Big(\bigodot_{\kappa = 0}^d\big(
\sum\limits_{\ell =0}^\infty
\frac{(i^{1-\kappa}\, t \gamma_\kappa)^\ell}{\ell!}\, D_x^{\kappa\,\ell}\big)\Big)\, (e^{ix(1-2j/N)}) \\
=
\Big(\bigodot_{\kappa = 0}^d
\big(\sum\limits_{\ell =0}^\infty
\frac{(i^{1-\kappa}\, t \gamma_\kappa)^\ell}{\ell!}\, D_x^{\kappa\,\ell}\big)\Big) \big(\sum\limits_{j=0}^N C_j(N,\lambda)\, e^{ix(1-2j/N)}\big).
\end{multline*}
We will justify such a rewriting in section \ref{sect3} and exploit it
in order to prove (theorem \ref{sect3-thm3}) that for any
$(\mu,\nu)\in \N^2$
$$
\frac{\partial^{\mu+\nu}}{\partial t^\mu
\partial x^\nu}
(\psi_{P,N}(t,x,\lambda)) \stackrel{N\longrightarrow \infty}{\longrightarrow}
\frac{\partial^{\mu+\nu}}{\partial t^\mu
\partial x^\nu} \, (e^{itP(\lambda)}\, e^{i\lambda x})
$$
Let $a\in \R \setminus [-1,1]$. We deduce in this way from the original ``superoscillating'' sequence
$\{x \mapsto F_N(x,a)\}_{N\geq 1}$ a large class of superoscillating sequences which are of the form $\{x\mapsto \psi_{P,N}(t,x,a)\}_{N\geq 1}$ where $t\in \R$ is interpreted as a real parameter, the superoscillating convergence being uniform with respect to the parameter $t$~; moreover the superoscillating convergence property propagates through any differential operator in $t,x$, the convergence being uniform on any compact subset of $\R^2_{t,x}$.
\vskip 2mm
\noindent
As we already mentioned, the class of superoscillating sequences
$\{x\mapsto \psi_{P,N}(t,x,a)\}_{N\geq 1}$ introduced previously includes examples where $(t,x)\mapsto \psi_{P,N}(t,x,a)$ is realized through the evolution in $t$ (from the initial value $t=0$) of the solution of a Cauchy problem (with initial datum on $\{t=0\}$) attached to a Schr\"odinger operator $i\partial/\partial t - \check P(\partial/\partial x)$, where $P$ is a real even differential operator with constant coefficients (the case where $\check P(\partial/\partial x) = - \partial^2/\partial x^2$ corresponds for example to the classical case of Schr\"odinger equation for a free particle).
Indeed if, for $\lambda\in \R$, one denotes
 the response (at the time $t$) to the input datum $x\mapsto e^{i\lambda x}$ (when $t=0$) with the function $(t,x)\longmapsto \varphi_\lambda(t,x)$, then for any $a\in \R$ the function $(t,x)\mapsto \psi_{P,N}(t,x,a)$ can be expressed as
$$
\psi_{P,N}(t,x,a) = \sum\limits_{j=0}^N C_j(N,a)\, \varphi_{1-2j/N}(t,x).
$$
Since, when $\lambda>1$ is arbitrarily large and $N\in \N^*$, the function $(t,x)\mapsto \psi_{P,N}(t,x,\lambda)$ arises from {\it shifted} functions
$(t,x)\mapsto \varphi_\lambda(t,x)$ with $|\lambda|\leq 1$, one can still say that $\lambda \in \R \mapsto \varphi_\lambda$ (considered from $\R$ into the space of functions of the variables $(t,x)$ in the phase domain, here $\R^2_{t,x}$)
admits $\lambda \mapsto \big\{(t,x)\mapsto \psi_{P,N}(t,x,\lambda)\big\}_{N\geq 1}$ as a {\it supershift}.
\vskip 2mm
\noindent
Given a Schr\"odinger operator
$$
i\frac{\partial}{\partial t} + \frac{1}{2} \frac{\partial^2}{\partial x^2} + V(x)
$$
with a suitable real potential $V$ and Green function $G_V~: (t,x,x') \mapsto
G_V(t,x,0,x')$ such that
$$
\varphi_\lambda (t,x) = \int_\R G_V(t,x,0,x')\, e^{i\lambda x'}\, dx' \\\quad {\rm or}
\int_{\R^+} G_V(t,x,0,x')\, e^{i\lambda x'}\, dx'
$$
for $\lambda \in \R$ and $(t,x)$ in the phase domain (as a regularized integral on $\R$ or $\R^+$ in a sense that will be precised in section \ref{sect4}), we will settle sufficient conditions that ensure in particular that the integral operator
\begin{multline*}
T(x'\mapsto f(x'))(t,x)  = \int_{\R} G_V(t,x,0,x')\, (x'\mapsto f(x'))\, dx'\\{\rm or}\quad
\int_{\R^+} G_V(t,x,0,x')\, (x'\mapsto f(x'))\, dx'
\end{multline*}
is such that for any $\lambda \in \R$,
\begin{multline*}
T \Big(x' \mapsto \sum\limits_{j=0}^N C_j(N,\lambda)\, e^{i(1-2j/N) x'}\Big)(t,x)
= \sum\limits_{j=0}^N
C_j(N,\lambda)\, \varphi_{1-2j/N}(t,x)\\
 \stackrel{N\longrightarrow \infty}{\longrightarrow} \, T(x'\mapsto e^{i\lambda x'})(t,x) =
 \varphi_\lambda(t,x)
\end{multline*}
locally uniformly in some open subset $\mathscr U$ of the phase space (in $\R^2_{t,x}$) on which $V$ is smooth and which
is entirely determined by the explicit expression of the Green function $G_V$ (theorem \ref{sect5-thm1}).
In such a situation the map $\lambda \in \R \mapsto (\varphi_\lambda)_{|\mathscr U}$ admits then the sequence
\begin{equation*}
\big\{\lambda \mapsto \big(\sum\limits_{j=0}^N C_j(N,a)\, T(
x'\mapsto e^{i(1-2j/N)x'})\big)_{|\mathscr U}\big\}_{N\geq 1} =
\big\{\lambda \mapsto \big(\sum\limits_{j=0}^N C_j(N,a)\, \varphi_{1-2j/N}\big)_{|\mathscr U}\big\}_{N\geq 1}
\end{equation*}
(where $a\in \R\setminus [-1,1]$)
as a supershift. Moreover, for any $\lambda \in \R$,
$\varphi_\lambda \in \mathscr C^\infty(\mathscr U,\C)$ and for any
$(\mu,\nu)\in \N^2$,
\begin{equation*}
\frac{\partial^{\mu+\nu}}{\partial t^\mu
\partial x^\nu} \Big(
\sum\limits_{j=0}^N C_j(N,a) \, \varphi_{1-2j/N}\Big)
\stackrel{N\longrightarrow \infty}{\longrightarrow}
\frac{\partial^{\mu+\nu}}{\partial t^\mu
\partial x^\nu} \, (\varphi_a)
\end{equation*}
locally uniformly in $\mathscr U$.
\vskip 2mm
\noindent
Interesting situations occur when $\lambda\in \R \mapsto
\varphi_\lambda$ makes sense as a continuous map from
$\R$ into $\mathscr D'(\mathscr U',\C)$ for some open subset $\mathscr U'
\supsetneq \mathscr U$ in the phase space. Such is the case in the example of the quantum harmonic oscillator, where
$V(x)=x^2/2$, the phase space is $\R^{+*} \times \R$ and
$$
\mathscr U = (\R^+ \times \R) \setminus \{\big(k\, \pi/2,x\big)\,;\,
k\in \N,\ x\in \R\} \subset \mathscr U' = \R^{+*} \times \R.
$$
In such case, given $k'\in \N$ and $x_0\in \R$, it is impossible to interpret
\begin{equation}\label{sect1-eq3}
\big\{\lambda \mapsto \big(\sum\limits_{j=0}^N C_j(N,a)\, \varphi_{1-2j/N}\big)_{{\rm about}\ ((2k'+1)\pi/2,x_0)}\big\}_{N\geq 1}
\end{equation}
(when $a\in \R \setminus [-1,1]$) as a supershift for $\lambda \longmapsto (\varphi_\lambda)_{|{\rm about}\, ((2k'+1)\pi/2,x_0)}$ (all maps being considered here as
distribu\-tion-valued about $((2k'+1)\pi/2,x_0)$), while it is possible to do so about a point $(k''\pi,x_0)$, where $k''\in \N^*$.
In order to interpret \eqref{sect1-eq3} as a supershift for $\lambda \mapsto (\varphi_\lambda)_{{\rm about}\ ((2k'+1)\pi/2,x_0)}$,
one needs to consider $(\varphi_\lambda)_{|{\rm about}\ ((2k'+1)\pi/2,x_0)}$ as a hyperfunction (in $t$) times a distribution (in $x$) instead of distribution in $(t,x)$. We will discuss such questions in section \ref{sect6}.

The plan of the paper is the following: the paper contains five sections, besides this introduction. In section 2 we introduce the spaces $A_p(\mathbb C)$, $A_{p,0}(\mathbb C)$, and we define some infinite order differential operators with nonconstant coefficients which will play a crucial role to prove our main results. In section 3  we recall the definition of generalized Fourier sequence and (complex) superoscillating sequence in one and several variables together with some examples; we then study two Cauchy-Kowalevski problems (one of which of Schr\"odinger type) and we show that superoscillations persist in time. In section 4 we address the problem of explaining the process of regularization of formal Fresnel-type integrals which is a necessary step to obtain further results in the paper. Fresnel-type integrals are shown to be continuous on $A_1(\mathbb C)$ in section 5, in which we also treat a Cauchy problem for the Schr\"odinger equation with centrifugal potential and also for the quantum harmonic oscillator. Finally, in section 6, we investigate the evolution of superoscillating initial data with
respect to the notion of supershift for the quantum harmonic oscillator, and we focus
 on singularities. It is interesting to note that in this case one needs to extend the concept of supershift in the case of hyperfunctions.
\vskip 1mm
\noindent
{\bf Notations.} We use the notations with capital letters $Z,d/dZ,W,d/dW,\check Z$ in the expressions of formal differential operators, besides the usual notation $z$ for the complex variable and $t$ (time)  $x,x'$ (space) real variables.

\section{On continuity of some convolution operators}

Let $f$  be a non-constant entire function of a complex variable $z$. We define
$$
M_f(r)=\max_{|z|=r}|f(z)|,\ \ \ \text{ for}\ \ r\geq 0.
$$
The non-negative real number $\rho$ defined by
$$
\rho=\limsup_{r\to\infty}\frac{\ln\ln M_f(r)}{\ln r}
$$
is called the {\it order} of $f$. If $\rho$ is finite then $f$ is said to be of {\it finite order} and if $\rho=\infty$ the function $f$ is said to be of {\it infinite order}.
\vskip 2mm
\noindent
In the case $f$ is of finite order we introduce the non-negative real number
$$
\sigma=\limsup_{r\to\infty}\frac{\ln M_f(r)}{ r^\rho},
$$
and call it the {\it type} of $f$. If $\sigma\in (0,\infty)$ we say
$f$ is {\it of normal type}, while we say it is {\it of minimal type} if $\sigma=0$ and {\it of maximal type} if $\sigma=\infty$.
Constant entire functions are considered of minimal type and order zero. In the sequel we will extensively make use of {\it weighted spaces}
$A_p(\C)$ or $A_{p,0}(\C)$ of entire functions whose definition follows~; such spaces are classical, see e.g. \cite{BG_book,Taylor}. 

\begin{definition}\label{sect2-def1}
Let $p$ be a strictly positive number.
We define the space $A_p(\C)$ as the $\C$-algebra of entire functions such that there exists $B>0$ such that
$$
\sup\limits_{z\in \C} \big(|f(z)|\, \exp(-B|z|^p\big)  <+\infty.
$$
The space $A_{p,0}(\C)$ consists of those entire functions such that
$$
\forall\, \varepsilon>0,\ \sup\limits_{z\in \C}
\big(|f(z)|\, \exp(-\varepsilon|z|^p) < +\infty.
$$
\end{definition}
\vskip 2mm
\noindent
To define a topology in these spaces we follow \cite[Section 2.1]{BG_book}. For $p>0$, $B>0$ and for any entire function $f$, we set
\[
\|f\|_B:=\sup_{z\in{\mathbb C}}\{|f(z)|\exp(-B|z|^p)\}.
\]
Let $A_p^B(\C)$ denote the $\C$-vector space of entire functions satisfying $\|f\|_B<\infty$. Then $\|\cdot\|_B$ defines a norm on
$A_p^B(\C)$ so that $(A_p^B(\C),\|\ \|_B)$ is a Banach space and the natural inclusion mapping $A_p^B\hookrightarrow A_p^{B'}$
(when $0<B\leq B'$) is a compact operator from $(A_p^B(\C),\|\ \|_B)$ into
$(A_p^B(\C),\|\ \|_{B'})$.
For any sequence $\{B_n\}_{n\geq 1}$ of positive numbers, strictly increasing to infinity, we can introduce an LF-topology on $A_p(\C)$ given by the inductive limit
\[
A_p(\C):=\lim_{\longrightarrow} A_p^{B_n}(\C) .
\]
Since this topology is stronger than the topology of the pointwise convergence, it is independent of the choice of the sequence $\{B_n\}_{n\geq 1}$. Thus, in this inductive limit topology,
given $f$ and a sequence $\{f_N\}_{N\geq 1}$ in $A_p(\C)$, we say that  $f_N\to f$ in $A_p(\C)$ if and only if there exists $n\in \N^*$ such  that $f, f_N\in A_p^{B_n}(\C)$ for all $N\in \N^*$,  and $\|f_N-f\|_{B_n}\to 0$ for $N\to\infty$. The topology on $A_{p,0}(\C)$ is given as the projective limit
\[
A_{p,0}(\C):=\lim_{\longleftarrow} A_p^{\varepsilon_n}(\C)
\]
where $\{\varepsilon_n\}_{n\geq 1}$ is a strictly decreasing sequence of positive numbers converging to $0$.
It can be proved, see \cite[Section 6.1]{BG_book}, that $A_p(\C)$  and $A_{p,0}(\C)$ are respectively a DFS space and an FS space.
When $p>1$, $A_{p,0}(\C)$ is the strong dual of $A_{p'}(\C)$
(where $1/p+1/p'=1$), the duality being realized as
$$
\mu \in (A_{p'}(\C))' \longmapsto
\Big[{\rm Fourier-Borel\ Transform\ of}\ \mu~: w\in \C \longmapsto \mu_z(e^{-zw})\Big] \in A_{p,0}(\C).
$$
In the extreme case $p=1$, $A_1(\C)$ (also denoted as ${\rm Exp}(\C)$) is isomorphic to the space $\widehat{H(\C)}$ of analytic functionals, the duality being realized as
$$
T \in \widehat{H(\C)} \longmapsto \Big[
{\rm Fourier-Borel\ Transform\ of}\ T~: w\in \C \mapsto
T_z(e^{-zw})\Big] \in A_1(\C).
$$
Here $H(\C)$ is equipped with its usual topology of uniform convergence on any compact subset.
\vskip 2mm
\noindent
The following result is an immediate consequence of the definition of the topology in the spaces $A_p(\C)$ for $p>0$.

\begin{proposition}\label{sect2-prop1}
Let $\bff = \{f_N\}_{N\geq 1}$ be a sequence of elements in $A_p(\C)$.
The two following assertions are equivalent:
\begin{itemize}
\item the sequence $\bff$ converges towards $0$ in
$A_p(\C)$~;
\item the sequence $\bff$ converges towards $0$ in
$H(\C)$ and there exists $A_{\bff}\geq 0$ and $B_\bff\geq 0$ such that
\begin{equation}\label{sect2-eq1}
\forall\, N\in \N^*,\quad \forall\, z\in \C,\
|f_N(z)| \leq A_{\bff}\, e^{B_{\bff} |z|^p
}.
\end{equation}
\end{itemize}
\end{proposition}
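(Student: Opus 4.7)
The plan is to prove the two implications separately by working directly with the definition of the LF-topology on $A_p(\C)$.

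For the forward direction, assume $\bff = \{f_N\}_{N\geq 1}$ converges to $0$ in $A_p(\C)$. By definition of the inductive limit topology, there exists $n\in \N^*$ such that $f_N \in A_p^{B_n}(\C)$ for every $N$ and $\|f_N\|_{B_n}\to 0$. Setting $B_{\bff} := B_n$, the sequence $(\|f_N\|_{B_n})_N$ is bounded by some constant $A_{\bff}\geq 0$, and the estimate $|f_N(z)|\leq \|f_N\|_{B_n}\, e^{B_n|z|^p}\leq A_{\bff}\,e^{B_{\bff}|z|^p}$ holds for all $N$ and all $z\in\C$, giving \eqref{sect2-eq1}. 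Moreover, on any compact $K\subset \C$ one has $\sup_{z\in K}|f_N(z)|\leq \|f_N\|_{B_n}\, \sup_{z\in K} e^{B_n|z|^p}\to 0$, which is precisely convergence of $\bff$ towards $0$ in $H(\C)$.

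For the converse, suppose $\bff$ converges to $0$ in $H(\C)$ and satisfies \eqref{sect2-eq1} with constants $A_{\bff}, B_{\bff}$. Pick any index $n$ with $B_n > B_{\bff}$, which is possible since $B_n\nearrow\infty$; I then need to show that $\|f_N\|_{B_n}\to 0$, which will imply convergence in $A_p^{B_n}(\C)$ and hence, via the continuous inclusion $A_p^{B_n}(\C)\hookrightarrow A_p(\C)$, convergence in the LF-space $A_p(\C)$. Fix $\varepsilon>0$. The key step is to split the supremum defining $\|f_N\|_{B_n}$ according to a radius $R>0$: on $\{|z|>R\}$ the bound \eqref{sect2-eq1} yields
\[
|f_N(z)|\, e^{-B_n|z|^p}\leq A_{\bff}\, e^{(B_{\bff}-B_n)|z|^p}\leq A_{\bff}\, e^{(B_{\bff}-B_n)R^p},
\]
which can be made smaller than $\varepsilon/2$ by choosing $R$ large enough, uniformly in $N$. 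For this fixed $R$, the closed disk $\{|z|\leq R\}$ is compact, so by the assumed convergence in $H(\C)$ there exists $N_0$ such that $\sup_{|z|\leq R}|f_N(z)|<\varepsilon/2$ for $N\geq N_0$, and consequently $\sup_{|z|\leq R}|f_N(z)|\, e^{-B_n|z|^p}<\varepsilon/2$ as well. Combining the two regions gives $\|f_N\|_{B_n}<\varepsilon$ for $N\geq N_0$.

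The argument is essentially a compactness-plus-uniform-decay splitting and has no genuine obstacle; the only mildly delicate point is to remember that the inductive limit definition of convergence requires choosing a \emph{single} index $n$ that works for the whole tail of the sequence, which is precisely why the uniform bound \eqref{sect2-eq1} (with constants independent of $N$) is indispensable in the reverse implication.
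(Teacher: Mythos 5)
Your proof is correct and follows essentially the same argument as the paper: for the forward direction, unwind the LF-limit definition to get a single norm index $B_n$ and use boundedness of the convergent sequence of norms; for the converse, take $B_n>B_{\bff}$ and split the supremum at a radius $R$ so that the uniform tail bound from \eqref{sect2-eq1} handles $|z|>R$ while convergence in $H(\C)$ handles the compact disk $|z|\leq R$. The only cosmetic difference is that the paper bounds the norms of the first $N_1$ terms explicitly and takes a max, whereas you simply invoke boundedness of a convergent real sequence; the content is identical.
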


\begin{proof}
The first assertion means that there exists $B>0$ with $\lim_{N\rightarrow \infty} \|f_N\|_{B} =0$ (in particular $\|f_N\|_B \leq 1$
for $N\geq N_1$), which implies that
the sequence $\bff$ converges to $0$ in $H(\C)$ and that $|f_N(z)|
\leq A e^{B |z|^p}$ with $B$ and $A=\sup(\tilde A_1,...,\tilde A_{N_1},1)$
independent of $N$ ($\tilde A_j=\sup_\C (|f_j(z)|\, e^{-B|z|^p})$ for $j=1,...,N_1$). Conversely, assume that the second assertion holds and take $B>B_\bff$, so that, given $\varepsilon>0$, there exists
$R_\varepsilon >0$ such that
$$
\forall\, N\in \N^*,\
\sup\limits_{|z|\geq R_\varepsilon} |f_N(z)|\, e^{-B |z|^p}
\leq A_{\bff}\, e^{(B_\bff -B) R_\varepsilon^p} < \varepsilon.
$$
On the other hand, since $\bff$ converges to $0$ uniformly on any compact subset of $\C$, in particular on $\overline{D(0,R_\varepsilon)}$, there exists $N_\varepsilon \in \N^*$ such that
$$
N\geq N_\varepsilon  \Longrightarrow
\sup\limits_{|z|\leq R_\varepsilon}
|f_N(z)|\, e^{-B |z|^p} \leq \sup\limits_{|z|\leq R_\varepsilon}
|f_N(z)| < \varepsilon.
$$
Therefore $\sup_{N\geq N_\varepsilon} \|f_N\|_B <\varepsilon$ and the sequence $\bff$ converges to $0$ in $A_p(\C)$.
\end{proof}
\vskip 2mm
\noindent
To prove our main results we need an important lemma that characterizes entire functions in $A_p(\C)$ in terms of the behaviour of their Taylor development, see Lemma 2.2 in \cite{AOKI}.

\begin{lemma}\label{sect2-lem1}
The entire function $f: z\mapsto \sum_{j=0}^\infty f_j z^j$
belongs to $A_p(\C)$ if and only if there exists $C =C_f >0$ and $b= b_f >0$ such that $f\in A^p_{C,b}(\C)$, where 
\begin{equation}\label{sect2-eq2}
A^p_{C,b}(\C) = 
\Big\{\sum_{j=0}^\infty  f_j z^j \in A_p(\C)\,; \forall\, j\in \N,\  |f_j| \leq C\, \frac{b^j}{\Gamma (j/p +1)}\Big\}.
\end{equation}
\end{lemma}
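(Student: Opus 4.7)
The plan is to prove the two implications separately. For the forward direction, assume $f\in A_p(\C)$, so there exist $A, B>0$ with $|f(z)| \leq A\exp(B|z|^p)$ for all $z \in \C$. By Cauchy's inequalities on the circle $\{|z|=r\}$,
\[
|f_j| \leq \frac{A\exp(B r^p)}{r^j} \quad \text{for every } r > 0.
\]
Minimize in $r$: differentiating $B r^p - j\log r$ yields the critical radius $r_j = (j/(Bp))^{1/p}$, at which $B r_j^p = j/p$, giving $|f_j| \leq A(eBp/j)^{j/p}$. Stirling's formula $\Gamma(j/p+1) \sim \sqrt{2\pi j/p}\,(j/(pe))^{j/p}$ then implies $|f_j|\,\Gamma(j/p+1) \leq A'\sqrt{j}\,B^{j/p}$. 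Finally, since for any fixed $\theta>1$ one has $\sqrt{j} \leq K(\theta)\theta^j$ uniformly in $j\in \N$, we obtain $|f_j| \leq C_f\, b^j/\Gamma(j/p+1)$ with $b := \theta B^{1/p}$ and $C_f := A' K(\theta)$.

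For the converse, suppose $|f_j| \leq C b^j/\Gamma(j/p+1)$ for all $j$. Termwise estimation gives
\[
|f(z)| \leq C \sum_{j=0}^\infty \frac{(b|z|)^j}{\Gamma(j/p+1)} = C\, E_{1/p}(b|z|),
\]
where $E_\alpha$ denotes the Mittag-Leffler function. Its classical real-axis asymptotics $E_{1/p}(s) \sim p\exp(s^p)$ as $s\to +\infty$ yield $|f(z)| \leq C'\exp(b^p|z|^p)$, hence $f\in A_p(\C)$. Alternatively, one may avoid invoking Mittag-Leffler by grouping the indices $j$ into blocks $[pk, p(k+1))$: each block contains at most $\lfloor p\rfloor + 1$ integers, $\Gamma(j/p+1)\geq k!$ once $k$ is past the minimum of $\Gamma$, and $(b|z|)^j \leq \max(1, (b|z|)^p)(b|z|)^{pk}$; summing the dominant series $\sum_k (b|z|)^{pk}/k!$ produces the same exponential bound, up to adjusting the constants.

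The main obstacle is the bookkeeping in the forward direction: Stirling produces a $\sqrt{j}$ prefactor that must be absorbed into the exponential base, and the resulting inequality has to hold for \emph{every} $j\geq 0$, not merely asymptotically. Replacing $B^{1/p}$ by $b=\theta B^{1/p}$ with $\theta>1$ handles this, since $\theta^j$ dominates any polynomial in $j$, while the finitely many low-order coefficients are then swept into $C_f$. No other step presents genuine analytical difficulty: both directions reduce to standard manipulations involving Cauchy's estimates, the Stirling asymptotic and the Mittag-Leffler (or direct exponential-series) bound.
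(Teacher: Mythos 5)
The paper does not actually prove Lemma \ref{sect2-lem1}: it simply cites Lemma~2.2 of \cite{AOKI}. Your proof is the standard one and is correct in both directions; a couple of small points deserve attention. In the forward direction, the Cauchy optimization and the Stirling comparison are done correctly (the $j^{j/p}$ and $(pe)^{j/p}$ factors cancel exactly, leaving $A'\sqrt{j}\,B^{j/p}$), and you correctly flag that Stirling only gives an asymptotic, so the estimate must be extended to all $j\ge 0$ by enlarging the geometric base from $B^{1/p}$ to $\theta B^{1/p}$ and absorbing finitely many small $j$ into $C_f$. Be a little careful at $j=0$: the optimization $r_j=(j/(Bp))^{1/p}$ degenerates, but $|f_0|=|f(0)|\le A$ and $\Gamma(1)=1$, so that index is trivially covered. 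In the converse direction, the Mittag-Leffler asymptotic $E_{1/p}(s)\sim p\,e^{s^p}$ (valid for $1/p\le 1$, i.e.\ $p\ge 1$; for $0<p<1$ one uses the same upper bound, which holds for all $0<\alpha<2$) indeed gives $f\in A_p(\C)$, and your alternative block-summation argument gives a self-contained version of the same bound. All in all this is the same route one finds in \cite{AOKI}; nothing in your write-up is incorrect.
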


\noindent
The following lemmas are refinements of results previously stated in \cite {AOKI}, except that we need here some extra dependency with respect to auxiliary parameters. They will be of crucial importance in order to prove the main results in the next sections.

\begin{lemma}\label{sect2-lem2} Let $\mathscr T$ be a set of parameters and $\tau \in \mathscr T \mapsto \D(\tau)$ be a differential operator-valued map
$$
\tau \in \mathscr T \longmapsto \D(\tau) = \sum\limits_{j=0}^\infty b_j(\tau) \Big(\frac{d}{dW}\Big)^j
$$
(with $b_j~: \mathscr T \rightarrow \C$ for $j\in \N$)
whose formal symbol
$$
\mathbb F~: (\tau,W) \in \mathscr T \times \C \longmapsto
\sum\limits_{j=0}^\infty b_j(\tau) W^j
$$
realizes for each $\tau\in \mathscr T$ an entire function of $W$ such that
\begin{equation}\label{sect2-eq2bis}
\sup\limits_{\tau \in \mathscr T,W\in \C}
\big(|\mathbb F(\tau,W)|\, e^{-B\, |W|^p}\big) = A < +\infty
\end{equation}
for some $p\geq 1$ and $B\geq 0$.
Then $\D(\tau)$ acts as a continuous operator from $A_1(\C)$ into itself uniformly with respect to the parameter $\tau \in \mathscr T$.
\end{lemma}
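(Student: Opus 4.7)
The plan is to reduce the claimed continuity to a quantitative operator-norm bound on each Banach building block $A_1^s(\C)$ of the inductive limit $A_1(\C) = \lim_{\longrightarrow} A_1^{B_n}(\C)$. Concretely, I shall show that for every $s > 0$ there is a constant $K(s) = K(A, B, p, s)$, independent of $\tau \in \mathscr T$, such that
\[
\D(\tau)(A_1^s(\C)) \subset A_1^s(\C), \qquad \|\D(\tau)(f)\|_s \leq K(s)\, \|f\|_s
\]
for all $\tau \in \mathscr T$ and $f \in A_1^s(\C)$. Granted this, Proposition \ref{sect2-prop1} finishes the job: if $f_N \to 0$ in $A_1(\C)$, then some fixed $A_1^{B_n}(\C)$ in the exhaustion contains all the $f_N$ with $\|f_N\|_{B_n} \to 0$, whence $\|\D(\tau)(f_N)\|_{B_n} \to 0$ uniformly in $\tau$, which is exactly the asserted equicontinuity.

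The first ingredient is a uniform-in-$\tau$ control of the formal coefficients $b_j(\tau)$. Hypothesis \eqref{sect2-eq2bis} says precisely that the family $\{W \mapsto \mathbb F(\tau, W)\}_{\tau \in \mathscr T}$ is a bounded subset of $A_p^B(\C)$. Cauchy's estimates on the circle of radius $r > 0$, optimised at $r = (j/(pB))^{1/p}$, combined with Stirling's equivalent $\Gamma(j/p + 1) \sim \sqrt{2\pi j/p}\,(j/(pe))^{j/p}$, produce the coefficientwise bound
\[
|b_j(\tau)| \leq C_0\, \frac{b_0^j}{\Gamma(j/p + 1)}, \qquad j \in \N,\ \tau \in \mathscr T,
\]
with $C_0, b_0 > 0$ depending only on $A$, $B$, $p$. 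This is a uniform-in-$\tau$ incarnation of Lemma \ref{sect2-lem1}.

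The second ingredient bounds the successive derivatives of an $f \in A_1^s(\C)$ with $\|f\|_s = M$. Applying Cauchy's formula on the circle $|w - z| = j/s$, using the inequality $|f(w)| \leq M e^{s|w|}$, and invoking Stirling once more yields a universal estimate of the shape $|f^{(j)}(z)| \leq c\, (1 + \sqrt{j})\, M\, s^j\, e^{s|z|}$ valid for all $j \in \N$. Term-by-term majorisation of the defining series $\D(\tau)(f) = \sum_j b_j(\tau)\, f^{(j)}$ then produces
\[
|\D(\tau)(f)(z)| \leq c\, C_0\, M\, e^{s|z|} \sum_{j = 0}^\infty (1 + \sqrt{j})\, \frac{(b_0 s)^j}{\Gamma(j/p + 1)},
\]
and all that remains is the observation that $\Gamma(j/p + 1)^{1/j} \to \infty$ for every finite $p$, so the series converges to a finite quantity $\Lambda(s)$. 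This both legitimises the termwise rearrangement — showing in passing that $\D(\tau)(f)$ indeed belongs to $A_1^s(\C)$ — and delivers the desired bound with $K(s) = c\, C_0\, \Lambda(s)$ independent of $\tau$.

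The main obstacle, and the point where the argument could a priori fail, is precisely this last convergence: the derivatives of an $A_1$-element grow essentially like $s^j$ (up to polynomial corrections), so this polynomial-exponential growth must be absorbed by $1/\Gamma(j/p + 1)$. The finiteness of $p$ (subsumed under $p \geq 1$) is exactly what makes the absorption possible; a symbol $\mathbb F$ of infinite order would leave no room for the majorisation.
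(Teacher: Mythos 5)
Your proof is correct and follows essentially the same strategy as the paper's: the uniform coefficient bound $|b_j(\tau)|\lesssim b_0^j/\Gamma(j/p+1)$ extracted from \eqref{sect2-eq2bis} (this is exactly the content of Lemma~\ref{sect2-lem1}), followed by termwise majorisation against a Mittag-Leffler--type series $\sum_j x^j/\Gamma(j/p+1)$, is the core of both arguments. The only noteworthy differences are cosmetic — you estimate $f^{(j)}$ directly by Cauchy's formula where the paper manipulates Taylor coefficients — and your concluding step, a uniform-in-$\tau$ operator-norm bound on each Banach step $A_1^s(\C)$ read off against the definition of the inductive-limit topology, is a somewhat cleaner way to finish than the paper's two-part $\varepsilon$--$R_\varepsilon$ argument verifying the two clauses of Proposition~\ref{sect2-prop1} separately.
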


\begin{proof}
It follows from Lemma \ref{sect2-lem1} that the coefficient functions
$\tau \longmapsto b_j(\tau)$ satisfy then uniform estimates
$$
\forall\, j\in \N,\ \forall\, \tau \in \mathscr T,\
|b_j(\tau)| \leq C \frac{b^j}{\Gamma(j/p+1)}
$$
for some positive constants $C=C(\D)$ and $b=b(\D)$ depending only on
the finite quantity $A$ in \eqref{sect2-eq2bis} and $B$.
Let $f: W \mapsto \sum_{\ell =0}^\infty a_\ell W^\ell \in A_1(\C)$. There are then (see again Lemma \ref{sect2-lem1}) positive constants $\gamma$ and $\beta$ such that
$|a_\ell|\leq (\gamma /\ell!)\, \beta^\ell$ for any $\ell\in \N$. Consider the action of $\D$ on such $f$. One has (for the moment formally)
\begin{multline}\label{sect2-eq3}
\forall\, \tau \in \mathscr T,\
\D(\tau)(f) = \sum\limits_{j=0}^\infty
b_j(\tau) (d/dW)^j(f)= \sum\limits_{j=0}^\infty
b_j(\tau) \Big(\sum\limits_{\ell = 0}^\infty \frac{(j+\ell)!}{\ell!}\, a_{\ell+j}\, W^\ell\Big) \\
= \sum\limits_{\ell =0}^\infty \Big( \sum\limits_{j=0}^\infty
\frac{(j+\ell)!}{\ell!} b_j(\tau) a_{\ell +j}\Big)\, W^\ell
\end{multline}
with
\begin{equation}\label{sect2-eq4}
\sum\limits_{j=0}^\infty
\frac{(j+\ell)!}{\ell!} |b_j(\tau)| \, |a_{\ell +j}|
\leq \gamma\, C \frac{\beta^\ell}{\ell!}\, \sum\limits_{j=0}^\infty \frac{(b\, \beta)^j}{\Gamma(j/p+1)} = K(b, C, \beta,\gamma)\,
\frac{\beta^\ell}{\ell!}.
\end{equation}
Therefore the formal identity \eqref{sect2-eq3} is in fact a true one for any $W\in \C$, which shows that $\D(\tau)[f]\in A_1(\C)$ for any $\tau \in \mathscr T$, with
$$
\forall\, \tau \in \mathscr T,\ \forall\, W\in \C,\quad
|\D(\tau)(f)| \leq K(b,C,\beta,\gamma)\, e^{\beta |W|}.
$$
Let $\EuFrak f = \{f_N\}_{N\geq 1}$ be a sequence converging towards $0$ in $A_1(\C)$ which is equivalent to say that $\sup (b_{f_N} + C_{f_N})<+\infty$ and that $\EuFrak f$ converges towards $0$ in $H(\C)$, see Proposition \ref{sect2-prop1}. Then the sequence $\{\D(\tau)(f_N)\}_{N\geq 1}= \D(\tau)(\EuFrak f)$ is such that
$$
\forall\, N\in \N^*,\  \forall\, \tau \in \mathscr T,\ \forall\, W\in \C,\quad |\D(\tau)(f_N)(W)|\leq A_{\EuFrak f}\, e^{B_{\EuFrak f} |W|}
$$
for some positive constants $A_{\EuFrak f}$ and $B_{\EuFrak f}$ depending only on $\D$ and $\EuFrak f$. Let $\EuFrak B > B_{\EuFrak f}$ and $\varepsilon >0$. Let $R=R_\varepsilon$ large enough such that
$$
\forall\, \tau \in \mathscr T,\ \forall\, N\in \N^*,\ \forall\, W \in
\C\ {\rm with}\ |W|> R,\ |\D(\tau)(f_N)(W)| e^{- \EuFrak B |W|} \leq \varepsilon.
$$
Since $\D(\tau)(f_N)(W) = \sum_{\ell =0}^\infty a_{N,\ell}(\tau) W^\ell$
with $|a_{N,\ell}(\tau)| \leq (C_{\EuFrak f}/\ell!)\, b_{\EuFrak f}^\ell$ for some constants
$C_{\EuFrak f}$ and $b_{\EuFrak f}$ independent on $\tau\in \mathscr T$ and on $N$ (see \eqref{sect2-eq4}) and the sequence $\EuFrak f$ converges to $0$ in $H(\C)$, one can find $N=N_\varepsilon$ such that
$$
\forall\, N\geq N_\varepsilon,
\ \forall\, \tau \in \mathscr T,\ \forall\,
W\in \C\  {\rm with}\ |W|\leq R,\ |\D(\tau)(f_N)(W)| \leq \varepsilon.
$$
Hence the sequence $\D(\tau)(\EuFrak f)$ converges towards $0$ in
$A_1(\C)$, uniformly with respect to the parameter $\tau$.
\end{proof}

\vskip 2mm
\noindent
Since the next lemma involves as set of parameters $\mathscr T$ the set which is now given as split in the form $\mathscr T = \EuFrak T \times \C_Z$, where $\C_Z$ is already a copy of the complex plane, one needs to duplicate $\C_Z$ into an extra copy of $\C$ denoted as $\C_W$.

\begin{lemma}\label{sect2-lem3} Let $\EuFrak T$ be a set of parameters and $\EuFrak t \in \EuFrak T\mapsto \D(\EuFrak t,Z)$ be a differential operator-valued map
$$
\EuFrak t \in \EuFrak T \longmapsto \D(\EuFrak t,Z) = \sum\limits_{j=0}^\infty b_j(\EuFrak t,Z) \Big(\frac{d}{dZ}\Big)^j
$$
(with $b_j~: \EuFrak T\times \C \rightarrow \C$, holomorphic in $Z$ for $j\in \N$) such that
\begin{equation}\label{sect2-eq4bis}
\forall\, \varepsilon >0,\quad
\sup\limits_{\EuFrak t\in \EuFrak T, (Z,W)\in \C^2}
\Big(\Big(\sum\limits_{j=0}^\infty |b_j(\EuFrak t,Z)|\, |W|^j\Big)\,
\exp (- \varepsilon\, |Z|^{\check p} - B\, |W|^{p})\Big) = A^{(\varepsilon)} < +\infty
\end{equation}
for some $\check p>1$, $p \geq 1$ and $B\geq 0$.
Then $\D(\EuFrak t,Z)$ acts as a continuous operator from $A_1(\C)$ into $A_{\check p,0}(\C)$ uniformly with respect to the parameter $\EuFrak t\in \EuFrak T$.
\end{lemma}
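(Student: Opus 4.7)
The plan is to follow the strategy of Lemma \ref{sect2-lem2}, but accounting both for the $Z$-dependence of the coefficients $b_j(\EuFrak t,Z)$ and for the fact that the target is now the FS space $A_{\check p,0}(\C)$ of \emph{minimal} type $\check p$ rather than $A_1(\C)$. First I would fix $f(Z)=\sum_{\ell\geq 0}a_\ell Z^\ell\in A_1(\C)$ and invoke Lemma \ref{sect2-lem1} to get $\gamma,\beta>0$ with $|a_\ell|\leq \gamma\beta^\ell/\ell!$, so that term-by-term differentiation yields the crude bound $|f^{(j)}(Z)|\leq \gamma\beta^j e^{\beta|Z|}$. Evaluating the hypothesis \eqref{sect2-eq4bis} at $|W|=\beta$ gives, for each prescribed $\varepsilon>0$,
$$
\sum_{j=0}^\infty |b_j(\EuFrak t,Z)|\,\beta^j \leq A^{(\varepsilon)}\,e^{\varepsilon|Z|^{\check p}+B\beta^p}\qquad(\EuFrak t\in \EuFrak T,\ Z\in\C),
$$
so the series $\D(\EuFrak t,Z)[f](Z)=\sum_j b_j(\EuFrak t,Z)f^{(j)}(Z)$ converges absolutely and satisfies
$$
|\D(\EuFrak t,Z)[f](Z)|\leq \gamma A^{(\varepsilon)}e^{B\beta^p}\,e^{\beta|Z|+\varepsilon|Z|^{\check p}}.
$$
Since $\check p>1$, Young's inequality gives $\beta|Z|\leq \delta|Z|^{\check p}+C_{\delta,\beta,\check p}$ for any $\delta>0$; so for an arbitrary target $\varepsilon'>0$ I choose $\varepsilon,\delta$ with $\varepsilon+\delta\leq\varepsilon'$ and absorb the linear term to conclude $|\D(\EuFrak t,Z)[f](Z)|\leq K_{\varepsilon'}(f)\,e^{\varepsilon'|Z|^{\check p}}$, with $K_{\varepsilon'}(f)$ independent of $\EuFrak t$. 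This already places $\D(\EuFrak t,Z)[f]$ in $A_{\check p,0}(\C)$.

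For continuity it suffices, by linearity, to treat a sequence $\EuFrak f=\{f_N\}_{N\geq 1}$ converging to $0$ in $A_1(\C)$. Proposition \ref{sect2-prop1} together with Lemma \ref{sect2-lem1} provides constants $C_{\EuFrak f},b_{\EuFrak f}>0$ with $|a_{N,\ell}|\leq C_{\EuFrak f}\,b_{\EuFrak f}^\ell/\ell!$ uniformly in $N$, and $f_N\to 0$ (hence each $f_N^{(j)}\to 0$) uniformly on every compact of $\C$. Fix $\varepsilon'>0$ and $\eta>0$. Replaying the previous estimate with $\beta=b_{\EuFrak f}$ and a choice of $\varepsilon<\varepsilon'$ yields
$$
|\D(\EuFrak t,Z)[f_N](Z)|\,e^{-\varepsilon'|Z|^{\check p}} \leq K\,e^{b_{\EuFrak f}|Z|-(\varepsilon'-\varepsilon)|Z|^{\check p}}
$$
uniformly in $(\EuFrak t,N)$; since $\check p>1$, the right-hand side is $<\eta/2$ as soon as $|Z|>R_\eta$, with $R_\eta$ depending only on $\eta,\varepsilon',\varepsilon,b_{\EuFrak f}$. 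On the compact disc $\overline{D(0,R_\eta)}$ I would split $\D(\EuFrak t,Z)[f_N]=\sum_{j\leq J_0}+\sum_{j>J_0}$. Plugging $|W|=2b_{\EuFrak f}$ into \eqref{sect2-eq4bis} gives $|b_j(\EuFrak t,Z)|\,b_{\EuFrak f}^{\,j}\leq K'\,2^{-j}$ uniformly in $(\EuFrak t,Z)\in\EuFrak T\times\overline{D(0,R_\eta)}$, which combined with $|f_N^{(j)}(Z)|\leq C_{\EuFrak f}\,b_{\EuFrak f}^{\,j}\,e^{b_{\EuFrak f}R_\eta}$ makes the $j$-tail a geometric remainder bounded by $K''\,2^{-J_0}$, hence $<\eta/4$ for $J_0$ large enough, uniformly in $\EuFrak t$ and $N$. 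The remaining \emph{finite} head involves only finitely many $b_j(\EuFrak t,Z)$'s, each uniformly bounded on $\EuFrak T\times\overline{D(0,R_\eta)}$ (again by \eqref{sect2-eq4bis}), while $f_N^{(j)}\to 0$ uniformly on $\overline{D(0,R_\eta)}$ as $N\to\infty$, so the head is $<\eta/4$ uniformly in $\EuFrak t$ for $N$ large. Combining both regions yields $\sup_{\EuFrak t\in\EuFrak T}\|\D(\EuFrak t,Z)[f_N]\|_{\varepsilon'}<\eta$, proving the asserted uniform continuity.

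The main obstacle, compared with the constant-coefficient situation of Lemma \ref{sect2-lem2}, is that the $Z$-dependence of the $b_j$'s rules out a direct Taylor-coefficient argument; one must instead exploit the freedom of choosing $|W|$ in the joint estimate \eqref{sect2-eq4bis} twice (once at $|W|=\beta$ for the global bound, once at $|W|=2b_{\EuFrak f}$ to enforce geometric decay of the $j$-tail uniformly in $\EuFrak t$ and $Z$). The strict inequality $\check p>1$ plays a second crucial role, since it is precisely what allows the linear growth $e^{\beta|Z|}$ inherited from the $A_1$-regularity of $f$ to be absorbed into an arbitrarily small $|Z|^{\check p}$-weight, thereby landing in the \emph{minimal}-type space $A_{\check p,0}(\C)$ and not merely in some $A_{\check p}(\C)$.
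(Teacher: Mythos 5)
Your proof is correct, and it takes a genuinely different route from the paper's. The paper expands the full symbol $\F(\EuFrak t,Z,W)=\sum_{j,\kappa}b_{j,\kappa}(\EuFrak t)Z^\kappa W^j$ as a double power series, controls the coefficients $b_{j,\kappa}(\EuFrak t)$ via bidisk Cauchy estimates, introduces the constant-coefficient operators $\D_\kappa(\EuFrak t)=\sum_j b_{j,\kappa}(\EuFrak t)(d/dW)^j$ so as to factor through the already-established Lemma \ref{sect2-lem2}, and then recombines $\sum_\kappa Z^\kappa\,\D_\kappa(\EuFrak t)(f)(Z)$ using the fact that the Mittag-Leffler function $E_{1/\check p,1}$ has order $\check p$ to conclude membership in $A_{\check p,0}(\C)$. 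You bypass both the double Taylor expansion and the Mittag-Leffler apparatus by working directly with the joint symbol estimate \eqref{sect2-eq4bis}, sampled at two well-chosen radii ($|W|=\beta$ to get the global growth bound and $|W|=2b_{\EuFrak f}$ to force geometric decay of the $j$-tail on a compact disc), and by using the elementary inequality $\beta|Z|\le\delta|Z|^{\check p}+C_{\delta,\beta,\check p}$ (valid precisely because $\check p>1$) to absorb the linear $A_1$-growth into an arbitrarily small multiple of $|Z|^{\check p}$. Your concluding split $|Z|\lessgtr R_\eta$ followed by $j\lessgtr J_0$ on the compact disc is logically consistent (first $R_\eta$, then $J_0$ for the tail uniformly in $N$ and $\EuFrak t$, then $N_\eta$ for the finite head) and every step is uniform in $\EuFrak t$ because all bounds descend from \eqref{sect2-eq4bis}, which is itself uniform. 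In short, yours is more self-contained and elementary and makes the role of $\check p>1$ completely explicit, while the paper's gains structure by reducing to the constant-coefficient case and reusing Lemma \ref{sect2-lem2}.
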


\begin{proof}
The function
\begin{equation}\label{sect2-eq5}
\F~: (\EuFrak t,Z,W) \longmapsto \sum\limits_{j=0}^\infty
\Big(\sum\limits_{\kappa =0}^\infty b_{j,\kappa}(\EuFrak t) Z^\kappa\Big)\, W^j =
\sum\limits_{\kappa =0}^\infty Z^\kappa\, \Big(
\sum\limits_{j=0}^\infty b_{j,\kappa}(\EuFrak t) \, W^j\Big)
\end{equation}
is well defined and depends as an entire function of two variables of the variables $Z$ and $W$ (which also justifies in \eqref{sect2-eq5} the application of Fubini theorem).
Cauchy formulae in $\C \times \C$ show that for any $\EuFrak t\in \EuFrak T$, for any $j,\kappa\in \N$,
\begin{multline}\label{sect2-eq6}
|b_{j,\kappa}(\EuFrak t)| = \frac{1}{4\pi^2}\Big|
\int_{|Z|=\check r,|W|=r} F(\EuFrak t,Z,W)
\frac{dZ}{Z^{\kappa+1}} \wedge \frac{dW}{W^{j+1}}\Big|
\leq A^{(\varepsilon)}\, \inf_{\check r>0}\frac{e^{\varepsilon \check r^{\check p}}}{r^\kappa}\times \inf_{r>0} \frac{e^{B r^p}}{r^j} \\
= A^{(\varepsilon)}
\Big(\frac{1}{\kappa}\Big)^{\kappa/\check p}
 \times \Big(\frac{1}{j}\Big)^{j/p}\, ((\varepsilon \check p\, e)^{1/\check p})^\kappa\, ((Bpe)^{1/p})^j \\
\leq C_{\eta}\,
 \frac{1}{\Gamma(\kappa/\check p+1) \Gamma(j/p+1)}\, (\eta\, \check b)^\kappa\, b^j
\end{multline}
for each $\eta>0$, with constants $C_{\eta}$, $\check b$ and $b$ independent on the parameter $\EuFrak t$. Let now $\EuFrak f = \{f_N\}_{N\geq 1}$ be a sequence of elements in
$A_1(\C)$ which converges to $0$ in $A_1(\C)$.
All differential operators
$$
\D_{\kappa}(\EuFrak t)~: = \sum\limits_{j=0}^\infty
b_{j,\kappa}(\EuFrak t) (d/dW)^j\quad (\kappa \in \N)
$$
act continuously on $A_1(\C)$, as seen in Lemma \ref{sect2-lem2}.
Moreover, one has (plugging in \eqref{sect2-eq4}
the estimates \eqref{sect2-eq6}) that
\begin{multline*}
\forall\, f\in A^1_{\gamma,\beta}(\C),\ \forall\, \EuFrak t\in \EuFrak T,\
\forall\, \kappa \in \N,\
\forall\, \ell\in \N,\\
(\D_{\kappa}(\EuFrak t)(f))_\ell
\leq \gamma\, \tilde C_{\eta}
\frac{(\eta\, \check b)^\kappa}{\Gamma(\kappa/\check p+1)}\, E_{1/p,1}
(\beta\, b)\, \frac{\beta^\ell}{\ell!}
\end{multline*}
where $E_{1/p,1}~: \zeta \in \C \longmapsto \sum_0^\infty \zeta^k/\Gamma (k/p + 1)$ is the entire (with order $1/p$ and type $1$) Mittag-Leffler function. One has therefore for such $f\in A^1_{\gamma,\beta}(\C_W)$ that
\begin{equation}\label{sect2-eq7}
\forall\, \EuFrak t \in \EuFrak T,\ \forall\, \kappa \in \N,\
\forall\, W\in \C,\quad
|\D_{\kappa}(\EuFrak t)(f)(W)|
\leq \gamma\, C_{\eta}\, E_{1/p,1}(\beta\, b)\, \frac{e^{\beta |W|}}{\Gamma (\kappa/\check p +1)}
\end{equation}
and (taking now $W=Z$)
\begin{equation}\label{sect2-eq8}
\forall\, \EuFrak t \in \EuFrak T,\
\forall\, Z \in \C,\quad
\sum\limits_{\kappa =0}^\infty
|Z|^\kappa\, |\D_{\kappa}(\EuFrak t)(f)(Z)|
\leq
\gamma\, C_{\eta}\, E_{1/p,1}(\beta\, b)\, e^{\beta |Z|}\,
\sum\limits_{\kappa =0}^\infty \frac{(\eta\,\check b\, |Z|)^\kappa}{\Gamma (k/\check p +1)}.
\end{equation}
Since the Mittag-Leffler function $E_{1/\check p,1}$ has order
$\check p>1$, the estimates \eqref{sect2-eq8} (uniform in
the parameter $\EuFrak t$ as well as on the function $f\in A^1_{\gamma,\beta}(\C)$) show that the differential operator acts continously
from $A_1(\C)$ into $A_{\check p,0}(\C)$, uniformly with respect to the parameter
$\EuFrak t\in \EuFrak T$. One just needs to repeat here the end of the proof of Lemma \ref{sect2-lem2}.
\end{proof}
\vskip 2mm
\noindent
We conclude this section by proving
a quantitative lemma which reveals to be essential in the sequel.
It is a refinement of  Lemma 1 in \cite{CSSY}.

\begin{lemma}\label{sect2-lem5} Let $a\in \C$ with $\alpha := \max(1,|a|)$ and, for any $z\in \C$,
$$
F_N(z,a) :=
\Big(\cos \Big(\frac{z}{N}\Big) + i\, a\, \sin\Big(\frac{z}{N}\Big)\Big)^N
$$
as in \eqref{sect1-eq1} (with $z, a\in \C$ instead of $x, a\in \R$).
For any $N\in \N^*$ and any $z\in \C$, one has
\begin{equation}\label{sect2-eq9}
\begin{split}
& |F_N(z,a)| \leq \exp\big(|a|\, |z| + |{\rm Im}(z)|\big)\leq
\exp\big((|a|+1)\, |z|\big)
 \\
& |F_N(z,a) - e^{iaz}| \leq
\frac{2}{3}\,
\frac{|a^2-1|}{N}\, |z|^2\, \exp\big( (\alpha+1) |z|\big).
\end{split}
\end{equation}
\end{lemma}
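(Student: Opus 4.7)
\emph{Proof plan.} I would prove the two inequalities in turn. For the first, factor $F_N(z,a) = g(z/N)^N$ with $g(w):=\cos w + ia\sin w$ and reduce matters to the pointwise estimate $|g(w)|\le\exp(|a||w|+|\operatorname{Im}(w)|)$; raising to the $N$-th power with $w=z/N$ then delivers the first line of \eqref{sect2-eq9}. For real $a$ and $w=u+iv$ the key identity is
\[
|g(w)|^2 = (\cosh v - a\sinh v)^2 + (a^2-1)\sin^2 u.
\]
When $|a|\le 1$ the last term is non-positive and, since $|\cosh v - a\sinh v|\le\cosh|v|+|\sinh v|=e^{|v|}$, the claim follows. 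When $|a|\ge 1$, use $|\cosh v - a\sinh v|\le e^{|a||v|}$ (a Taylor-coefficient comparison) and $(a^2-1)\sin^2 u\le a^2u^2$, and then establish the elementary inequality
\[
e^{2|a||v|}+a^2u^2 \ \le\ \exp\bigl(2|a|\sqrt{u^2+v^2}+2|v|\bigr)
\]
by differentiating in $u\ge 0$ and invoking $e^r\ge r$ (the case $u=0$ is trivial). The complex-$a$ case comes from an analogous identity, the new cross-terms in $\operatorname{Im}(a)$ being absorbed by the same mechanism.

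For the second inequality, set $u:=\cos(z/N)+ia\sin(z/N)$ and $v:=e^{iaz/N}$, and use the telescoping identity $u^N-v^N=(u-v)\sum_{k=0}^{N-1}u^kv^{N-1-k}$. The difference $\delta(w):=\cos w+ia\sin w-e^{iaw}$ satisfies $\delta''+\delta=(1-a^2)e^{iaw}$ with $\delta(0)=\delta'(0)=0$, so variation of parameters gives
\[
\delta(w) \ =\ (1-a^2)\,w\int_0^1\sin\bigl((1-t)w\bigr)\,e^{iatw}\,dt.
\]
The routine bounds $|\sin\zeta|\le|\zeta|e^{|\zeta|}$ and $|e^{iatw}|\le e^{t|a||w|}$ then produce $|\delta(w)|\le\tfrac{|a^2-1|}{2}|w|^2 e^{\alpha|w|}$. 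Combining with $|u|,|v|\le e^{(\alpha+1)|w|}$ (from the first part of \eqref{sect2-eq9} applied at $N=1$), the telescoping yields, after plugging $w=z/N$,
\[
|F_N(z,a)-e^{iaz}|\ \le\ \tfrac{|a^2-1||z|^2}{2N}\,e^{(\alpha+1)|z|-|z|/N}\ \le\ \tfrac{|a^2-1||z|^2}{2N}\,e^{(\alpha+1)|z|},
\]
which is sharper than, and hence implies, the stated bound with constant $\tfrac{2}{3}$.

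The main obstacle is the single-factor estimate for $|\cos w+ia\sin w|$ in the regime $|a|\ge 1$: the algebraic inequality just quoted is elementary but requires some care, and the complex-$a$ extension introduces cross-terms $\operatorname{Im}(a)\sin(2u)$ and $\operatorname{Im}(a)^2\sinh^2 v$ in $|g(w)|^2$ that must be absorbed into the right-hand side without destroying the linear exponent $|a||w|+|\operatorname{Im}(w)|$. Once this pointwise estimate is in hand, both bounds follow cleanly from the factorization/telescoping arguments above.
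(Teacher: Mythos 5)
Your plan for the second inequality is correct and takes a genuinely different route from the paper; your plan for the first inequality has a real gap in the complex-$a$ case that the paper's argument avoids entirely.

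For the second inequality, the paper splits $\cos(z/N)+ia\sin(z/N)-e^{iaz/N}$ into the two pieces $\cos(z/N)-\cos(az/N)$ and $i\big(a\sin(z/N)-\sin(az/N)\big)$, bounds each by trigonometric product formulas and a Taylor-coefficient comparison, and arrives at the constant $\tfrac12+\tfrac16=\tfrac23$. Your route --- observing that $\delta(w)=\cos w+ia\sin w-e^{iaw}$ solves $\delta''+\delta=(a^2-1)e^{iaw}$, $\delta(0)=\delta'(0)=0$, and using the Duhamel kernel $\sin(w-s)$ --- is a cleaner conceptual derivation that collapses the two-term estimate into one integral and, once fed into the same telescoping identity, yields the sharper constant $\tfrac12$ in place of $\tfrac23$. (Note that $\delta''+\delta=(a^2-1)e^{iaw}$, not $(1-a^2)e^{iaw}$; the sign does not affect the modulus bound.) This is a valid and nicer argument than the paper's.

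The first inequality is where your plan does not close. You reduce to the pointwise estimate $|\cos w+ia\sin w|\le\exp(|a||w|+|\mathrm{Im}\,w|)$ and prove it for \emph{real} $a$ via the identity $|g(w)|^2=(\cosh v-a\sinh v)^2+(a^2-1)\sin^2 u$ plus an elementary comparison of $e^{2|a||v|}+a^2u^2$ with the exponential. But the Lemma is stated for $a\in\C$, and for complex $a$ the identity acquires cross-terms involving $\mathrm{Re}(a)\,\mathrm{Im}(a)$ and $\mathrm{Im}(a)^2$; you acknowledge them but only assert they ``can be absorbed by the same mechanism.'' That claim is not substantiated, and the mechanism you used (differentiate in $u$, invoke $e^r\ge r$) relies on the specific two-term structure of the real-$a$ identity; it is not at all obvious that it survives the extra terms without degrading the exponent to something worse than $|a||w|+|\mathrm{Im}\,w|$. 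Since this pointwise bound is also what powers the telescoping sum in your second argument, the gap propagates there too when $a\notin\R$. The paper sidesteps the whole issue with the simple factorization $\sin(z/N)=(z/N)\,\mathrm{sinc}(z/N)$ together with $|\mathrm{sinc}\,\zeta|\le e^{|\mathrm{Im}\,\zeta|}$, which gives immediately
$$
\Big|\cos\Big(\tfrac{z}{N}\Big)+ia\sin\Big(\tfrac{z}{N}\Big)\Big|
\le e^{|\mathrm{Im}\,z|/N}\Big(1+\tfrac{|a||z|}{N}\Big),
$$
valid for \emph{every} $a\in\C$ with no case analysis; raising to the $N$th power and using $(1+c/N)^N\le e^{c}$ finishes the first line of \eqref{sect2-eq9}. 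You should replace your squared-modulus analysis with this sinc argument (or otherwise actually carry out the absorption of the $\mathrm{Im}(a)$ cross-terms) before the proof can be considered complete.
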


\begin{proof}
Let
$$
{\rm sinc}: z \in \C \longmapsto \frac{\sin z}{z} =
\int_0^1 t\, \cos (tz)\, dt
$$
be the sinus cardinal function; it satisfies $|{\rm sinc}(z)|
\leq e^{|{\rm Im}(z)|}$ for any $z\in \C$. One has then the upper uniform estimates
\begin{multline}\label{sect2-eq10}
\forall\, N\in \N^*,\ \forall\, z\in \C,\ |F_N(z,a)| =
\Big|
\cos \Big( \frac{z}{N}\Big) + i a
\sin\Big(\frac{z}{N}\Big)\Big|^N =
\Big|
\cos \Big(\frac{z}{N}\Big) + i\, \frac{az}{N}\, {\rm sinc}
\Big(\frac{z}{N}\Big)\Big|^N \\
\leq e^{|{\rm Im}(z)|} \Big( 1 + \frac{|az|}{N}\Big)^N
\leq \exp (|a|\, |z| + |{\rm Im}(z)|) \leq \exp \big((|a|+1)|z|\big),
\end{multline}
which is the first chain of inequalities in \eqref{sect2-eq9}.
For any $N\in \N^*$, one has also
\begin{multline}\label{sect2-eq11}
\Big|\cos \Big(\frac{z}{N}\Big) - \cos \Big(\frac{a z}{N}\Big)\Big|
=
2\,\Big|\sin \Big(
\frac{(a-1)z}{2N}\Big)\, \sin \Big(\frac{(a+1)z}{2N}\Big)\Big| \\
\leq \frac{|a^2-1|}{2N^2}\, |z|^2\, \exp
\Big(\frac{|a-1|+|a+1|}{2N}\, |z|\Big)
\leq \frac{|a^2-1|}{2N^2}\, |z|^2\, \exp \Big(\frac{\alpha+1}{N}\, |z|\Big)
\end{multline}
and
\begin{multline}\label{sect2-eq12}
\Big|
a \sin \Big(
\frac{z}{N}\Big) -
\sin \Big(
\frac{a z}{N}\Big)\Big|
=
\Big|
\sum\limits_{k=0}^\infty
\frac{(-1)^{k}}{(2k+1)!} (a-a^{2k+1})\, \Big(
\frac{z}{N}\Big)^{2k+1}\Big| \\
= \frac{|a^2-1|}{N^2}\, |z|^2\,
\Big|
\sum\limits_{k=1}^\infty
\frac{(-1)^{k}}{(2k+1)!}\, \Big(\sum\limits_{\ell=0}^{k-1}
a^{2\ell+1}\Big)
\, \Big(
\frac{z}{N}\Big)^{2k-1}\Big| \\
\leq \frac{|a^2-1|}{2N^2}\, |z|^2
\sum\limits_{k=1}^\infty
\frac{\alpha^{2k-1}}{(2k-1)!(2k+1)}\, \Big(
\frac{|z|}{N}\Big)^{2k-1} \\
\leq \frac{|a^2-1|}{6N^2}\, |z|^2
\, \sum\limits_{k=1}^\infty
\frac{1}{(2k-1)!}\, \Big(
\frac{\alpha |z|}{N}\Big)^{2k-1} \leq \frac{|a^2-1|}{6N^2}\, |z|^2
\, \exp \Big(
\frac{\alpha}{N}\, |z|\Big).
\end{multline}
It follows from the identity $A^N-B^N = (A-B) \sum\limits_{k=0}^{N-1}
A^k B^{N-1-k}$, together with estimates
\eqref{sect2-eq11}, \eqref{sect2-eq12} and \eqref{sect2-eq10},
that for any $N\in \N^*$ and $z\in \C$,
\begin{multline*}
|F_N(z,a) - e^{iaz}| =
\Big|\cos \Big(\frac{z}{N}\Big) - \cos \Big(\frac{a z}{N}\Big)
+ i \Big(a \sin \Big(
\frac{z}{N}\Big) -
\sin \Big(
\frac{a z}{N}\Big)\Big)\Big| \\
\times \sum\limits_{k=0}^{N-1}
|F_N(z,a)|^{k/N} \,
\Big|\exp\big( ia z \frac{N-1-k}{N}\big)\Big| \\
\leq \frac{2}{3}\, \frac{|a^2-1|}{N^2}\, |z|^2\,
\exp\Big(\frac{\alpha+1}{N}\, |z|\Big)\,
\sum\limits_{k=0}^{N-1}
\exp\Big(k\, \Big(\frac{|a|+1}{N}\Big)\, |z|
+ \frac{N-1-k}{N}\, |a|\, |z|
\Big)\\
\leq \frac{2}{3}\, \frac{|a^2-1|}{N}\, |z|^2\, \exp\big(
(\alpha+1)|z|\big).
\end{multline*}
The second inequality in \eqref{sect2-eq9} is thus proved.
\end{proof}
\vskip 2mm
\noindent
One can now state as a consequence of Proposition \ref{sect2-prop1} and Lemma \ref{sect2-lem5} the following theorem.

\begin{theorem}\label{sect2-thm1} For any $a\in \C$,
the sequence $\{z\mapsto F_N(z,a)\}_{N\geq 1}$ converges to
$z\mapsto e^{iaz}$ in $A_1(\C)$.
\end{theorem}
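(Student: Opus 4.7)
The plan is to apply Proposition \ref{sect2-prop1} with $p=1$ to the sequence $\bff = \{z\mapsto F_N(z,a) - e^{iaz}\}_{N\geq 1}$, showing this sequence converges to $0$ in $A_1(\C)$. This reduces the problem to verifying two independent statements: pointwise (in fact locally uniform) convergence to $0$ in $H(\C)$, and a uniform-in-$N$ exponential bound of the form \eqref{sect2-eq1} with $p=1$.

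For the exponential bound, I would use the first inequality in \eqref{sect2-eq9} from Lemma \ref{sect2-lem5}, giving
\[
|F_N(z,a) - e^{iaz}| \leq |F_N(z,a)| + |e^{iaz}| \leq \exp\bigl((|a|+1)|z|\bigr) + e^{|a||z|} \leq 2 \exp\bigl((|a|+1)|z|\bigr)
\]
for every $N\geq 1$ and every $z\in \C$. This furnishes the required constants $A_{\bff} = 2$ and $B_{\bff} = |a|+1$, independent of $N$.

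For the locally uniform convergence in $H(\C)$, I would invoke the second inequality in \eqref{sect2-eq9}, namely
\[
|F_N(z,a) - e^{iaz}| \leq \frac{2}{3}\,\frac{|a^2-1|}{N}\, |z|^2\, \exp\bigl((\alpha+1)|z|\bigr),
\]
where $\alpha = \max(1,|a|)$. On any compact $K\subset \C$, the factor $|z|^2 \exp((\alpha+1)|z|)$ is bounded, so the right-hand side is $O(1/N)$ uniformly on $K$, yielding convergence to $0$ in $H(\C)$.

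Combining these two observations via Proposition \ref{sect2-prop1} concludes the proof. There is no genuine obstacle here: Lemma \ref{sect2-lem5} was precisely tailored to provide the two estimates needed, and the only mild point to check is that the $1/N$-rate in the quantitative estimate, multiplied by a polynomial-times-exponential factor, does yield uniform smallness on compact subsets of $\C$, which is immediate.
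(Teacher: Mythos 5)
Your proof is correct and follows essentially the same route as the paper: both apply Proposition \ref{sect2-prop1} with $p=1$ using the two estimates of Lemma \ref{sect2-lem5}, the first to get a uniform-in-$N$ exponential bound and the second to get locally uniform convergence in $H(\C)$. The only cosmetic difference is that you apply the proposition explicitly to the difference sequence $F_N(\cdot,a)-e^{ia\cdot}$ (which is the literally correct way to read the proposition), whereas the paper states the bound for $F_N$ alone and leaves the reduction to the difference implicit.
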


\begin{proof} It follows from estimates \eqref{sect2-eq10} that the sequence $\bff = \{z\mapsto F_N(z,a)\}_{N\geq 1}$ satisfies the estimates
\eqref{sect2-eq1} with $p=1$, $B_\bff = |a|+1$ and $C_\bff = 1$.
Lemma \ref{sect2-lem5} implies on the other hand that the sequence $\bff$ converges
towards $z\mapsto e^{iaz}$ in $H(\C)$. The result is then a consequence of Proposition \ref{sect2-prop1}.
\end{proof}

\section{Uniform convergence of superoscillating sequences}
\label{sect3}

Let $m\in \N^*$ and $(\mathscr F(\R^m,\C))^{\N^*}$ be the
family of all sequences $\boldsymbol Y=\{x\in \R^m \mapsto Y_N(x)\}_{N\geq 1}$ of complex valued functions defined on $\R^m$. We first recall in this section the notions of (complex) {\it generalized Fourier sequence}
(CGFS) and (complex) {\it superoscillating sequence} (CSOscS) in
$(\mathscr F(\R^m,\C))^{\N^*}$. We start first with the case $m=1$.

\begin{definition}\label{sect3-def1}
A sequence $\boldsymbol Y \in (\mathscr F(\R,\C))^{\N^*}$ is called
a {\rm complex generalized Fourier sequence} if each entry $Y_N$ is, after re-indexation, of the form
\begin{equation}\label{sect3-eq1}
Y_N : x\in \R
\longmapsto \sum\limits_{j=0}^N  C_{j}(N)\, \exp (i k_{j}(N)x),
\end{equation}
where $C_j(N)\in \C$ and $k_j(N)\in \R$ for any $N\in \N^*$ and $j\in \N$.
\end{definition}

\begin{example}\label{sect3-expl1}{\rm
\vskip 1mm
\noindent
\begin{enumerate}
\item If $f \in L^1\big(\T,\C\big)$, where $\T =\R/(2\pi\Z)$, is any subsequence  of the Fourier (resp. Fourier-Fej\'er) sequences
$\{x\mapsto S_N(x)\}_{N\geq 1}$ (resp. $\{x\mapsto F_N(x)\}_{N\geq 1}$), where
\begin{eqnarray*}
S_N(x) &=& \sum\limits_{j=0}^{2N}
\Big(
\int_{\T} f(\theta)
e^{-i\, (j-N)\, \theta}\,
\frac{d\theta}{(2\pi)}\Big)\, e^{i\, (j-N)\, x}
\\
F_N(x)
&=&
\sum\limits_{j=0}^{2N}
\Big(1 - \frac{|j-N|}{N}\Big)\, \Big(
\int_{\T} f(\theta)
e^{-i\, (j-N)\, \theta}\,
\frac{d\theta}{(2\pi)}\Big)\, e^{i\, (j-N)\, x},
\end{eqnarray*}
then it
realizes, after re-indexation, an archetypical example of a complex generalized Fourier sequence
in $(\mathscr F(\R,\C))^{\N^*}$ This fact justifies the terminology.
\item
When $m=1$ and $a\in \R$, the sequence $\{x\mapsto F_N(x,a)\}_{N\geq 1}$
is also an example of a complex generalized Fourier sequence in $(\mathscr F(\R,\C))^{\N^*}$. In this case, note that $C_j(N) = C_j(N,a)\in \R$ for any $j\in \N$.
\item Let $P= \sum_{\kappa \in \Z^*} \gamma_\kappa X^\kappa \in \C[X,X^{-1}]$ be a Laurent polynomial and $L(P)$ the diameter of its support. Any sequence $\{x\mapsto Y_N(x)\}_{N\geq 1}$ such that
$$
Y_N(x)= \sum_{j=0}^N C_j(N) P(e^{ik_j(N) x}) =
\sum\limits_{j=0}^N
\sum\limits_{\kappa\in \Z^*}\lambda_\kappa C_j(N)
e^{i\kappa \kappa_j(N) x} = \sum\limits_{j=0}^{ L(P)\, N}
\widetilde C_j(N)\, e^{i\widetilde \kappa_j(N) x}
$$
is after re-indexation a complex generalized Fourier sequence in $(\mathscr F(\R,\C))^{\N^*}$.
\end{enumerate}
}
\end{example}

\begin{definition}\label{sect3-def2}
A complex generalized Fourier sequence $\{x\mapsto Y_N(x)\}_{N\geq 1}$ in $(\mathscr F(\R,\C))^{\N^*}$
is called a {\rm complex superoscillating sequence} if
\begin{itemize}
\item each entry $Y_N$ is of the form \eqref{sect3-eq1} with $|k_j(N)|\leq 1$ for any $j\in \N$ such that $0\leq j\leq N$;
\item
there exists an open subset $U^{\rm sosc}\subseteq \R$ which is called a
{\rm superoscillation domain} such that $\{x\mapsto Y_N(x)\}_{N\geq 1}$
converges uniformly on any compact subset of $U^{\rm sosc}$ to the restriction to $U^{\rm sosc}$ of a trigonometric polynomial function
$$
Y_\infty: x \longmapsto  P_\infty(e^{ik(\infty) x})
$$
where $P_\infty\in \C[X,X^{-1}]$ is a Laurent polynomial with no constant term and $k(\infty) \in \R\setminus [-1,1]$.
\end{itemize}
\end{definition}

\begin{remark}\label{sect3-rem1}
{\rm
If $\boldsymbol Y$ is a superoscillating sequence in the sense of Definition \ref{sect3-def2}, it is $Y_\infty$-superoscillating in the sense of Definition 1.1 in \cite{CSY}, with {\it superoscillation set} any segment $[a,b]$ such that $b-a>0$ is included in the superoscillation domain $U^{\rm sosc}$.
}
\end{remark}

\begin{example}\label{sect3-expl2}
{\rm
\vskip 1mm
\noindent
\begin{enumerate}
\item Any subsequence of the Fourier (resp. Fourier-Fej\'er) sequences
$\{x\mapsto S_N(x)\}_{N\geq 1}$ (resp. $\{x\mapsto F_N(x)\}_{N\geq 1}$) introduced in Example \ref{sect3-expl1} (1) fails to be superoscillating since the condition $|k_j(N)|\leq 1$ is not fulfilled.
\item
If $a\in \R \setminus [-1,1]$, the sequence
$\{x\mapsto F_N(x,a)\}_{N\geq 1}$ is a superoscillating sequence
in $(\mathscr F(\R,\C))^{\N^*}$ with superoscillation domain equal to $\R$, with $Y_\infty: x\in \R \mapsto e^{iax}$. This follows from
Lemma \ref{sect2-lem5} (namely from the inequalities \eqref{sect2-eq9} for $a\in \R$ and $x\in \R$). This is the model that inspired us originally and that we will generalize in this paper.
\end{enumerate}
}
\end{example}

\vskip 2mm
\noindent
Inspired by physical considerations which we will discuss later on,
we extend as follows Definition \ref{sect3-def1} and Definition
\ref{sect3-def2} to the higher dimensional setting where $m>1$.
The model we will use in order to extend Definition \ref{sect3-def1} will be the one in Example \ref{sect3-expl1} (3).

\begin{definition}\label{sect3-def3}
A sequence $\boldsymbol Y \in (\mathscr F(\R^m,\C))^{\N^*}$ is called
a complex {\rm generalized Fourier sequence} if, after re-indexation, each entry $Y_N$ is of the form
\begin{equation}\label{sect3-eq2}
Y_N : x=(x_1,...,x_m)\in \R^m
\longmapsto \sum\limits_{j=0}^N  C_{j}(N)\,
P\big(e^{ix_1 k_{j,1}(N)},...,e^{ix_m k_{j,m}(N)}\big)
\end{equation}
where $P\in \C[X_1,...,X_m]\in \C[X_1^{\pm 1},...,X_m^{\pm 1}]$ is a Laurent polynomial (independent of $N$), $C_j(N)\in \C$ and $N\mapsto k_j(N)$ is a map from $\N^*$ to
$\R^m$ for any $N\in \N^*$ and $j=0,...,N$.
\end{definition}

\begin{example}\label{sect3-expl3}
{\rm Let $t,x$ be two real variables, $C_j(N)\in \C$,
$\kappa_j(N)\in \R$, $k_j(N)\in \R$ for any $N\in \N^*$ and
$0\leq j\leq N$. Then
$$
\Big\{x \mapsto \sum\limits_{j=0}^N C_j(N) e^{i \kappa_j(N)\, t} e^{i k_j(N)\, x}\Big\}_{N\geq 1}
$$
is a complex generalized Fourier sequence in the two real variables $t,x$, the polynomial $P\in \C[T,X]$ being here $P(T,X)=TX$.
}
\end{example}

\noindent
Definition \ref{sect3-def2} extends to the multivariate case as follows.

\begin{definition}\label{sect3-def4}
A complex generalized Fourier sequence $\{x\mapsto Y_N(x)\}_{N\geq 1}$ in $(\mathscr F(\R^m,\C))^{\N^*}$
is called a {\rm complex superoscillating sequence} if
\begin{itemize}
\item each entry $Y_N$ is of the form \eqref{sect3-eq2} with additionally $|k_{j,\ell}(N)|\leq 1$ for any $j\in \N$ such that $0\leq j\leq N$ and $\ell=1,...,m$;
\item
there exists an open subset $U^{\rm sosc}\subseteq \R^m$ which is called a
{\rm superoscillation domain} such that $\{x\mapsto Y_N(x)\}_{N\geq 1}$
converges uniformly on any compact subset of $U^{\rm sosc}$ to the restriction to $U^{\rm sosc}$ of a trigonometric polynomial function
$$
Y_\infty: x \longmapsto  P_\infty(e^{ik_1(\infty) x_1},...,e^{i k_m(\infty) x_m})
$$
where $P_\infty\in \C[X_1^{\pm 1},...,X_m^{\pm 1}]$ is a Laurent polynomial with no constant term and $k_j(\infty) \in (\R\setminus [-1,1])^m$.
\end{itemize}
\end{definition}
\noindent
In order to illustrate Definition \ref{sect3-def4} with an example which is derived from Example \ref{sect3-expl2} (2), consider, for $p\in \N$ and $a\in \R \setminus [-1,1]$, the complex generalized Fourier sequence in two real variables $t,x$
\begin{equation}\label{sect3-eq3}
\Big\{\psi_{p,N}(\cdot,\cdot,a)~: (t,x) \in \R^2 \longmapsto
\sum\limits_{j=0}^N C_j(N,a)\, e^{i (1-2j/N)^p\, t}\, e^{i(1-2j/N)x}\Big\}_{N\geq 1}
\end{equation}
(see Example \ref{sect3-expl3}). An immediate computation shows that for any $(t,x)\in \R^2$,
\begin{eqnarray*}
\frac{\partial}{\partial t}
\big(\psi_{p,N}(t,x,a)\big) &=& i\, \sum\limits_{j=0}^N
C_j(N,a)\, (1-2j/N)^p\,e^{i (1-2j/N)^p\, t}\, e^{i(1-2j/N) x}  \\
\frac{\partial^p}{\partial x^p}
\big(\psi_{p,N}(t,x,a)\big) &=& i^p\, \sum\limits_{j=0}^N
C_j(N,a)\, (1-2j/N)^p\,e^{i (1-2j/N)^p\, t}\, e^{i(1-2j/N) x},
\end{eqnarray*}
which shows that $(t,x)\in \R^2 \longmapsto \psi_{p,N}(t,x,a)$ is the (unique) global solution of the Cauchy-Kowalevski problem
\begin{equation}\label{sect3-eq4}
\Big(i^{p-1} \frac{\partial}{\partial t} -
\frac{\partial^p}{\partial x^p}\Big) (\psi) \equiv 0,\quad
[\psi(t,x)]_{|t=0} = F_N(x,a).
\end{equation}
One can extend analytically $\psi_{p,N}(\cdot,\cdot,a)$ as a function
from $\R \times \C$ to $\C$, such that one has formally
\begin{eqnarray}\label{sect3-eq5}
\psi_{p,N}(t,z,a) &=& \sum\limits_{j=0}^N C_j(N,a)\,
\Big( \sum\limits_{\ell=0}^\infty
\frac{i^{\ell(1-p)}\, t^\ell}{\ell!} \big(i(1-2j/N)\big)^{p\ell}\Big)\, e^{i(1-2j/N)z} \nonumber \\
&=& \Big(\sum\limits_{\ell=0}^\infty
\frac{i^{\ell(1-p)}\, t^\ell}{\ell!} D^{p\ell}\Big)(F_N(\cdot,a)) (z) = \D_p(t) (F_N(\cdot,a)) (z).
\end{eqnarray}
One can prove here the following result.

\begin{theorem}\label{sect3-thm1}
The operator $\D_p(t)$ acts continuously from $A_1(\C)$ into itself.
The generalized Fourier sequence \eqref{sect3-eq3} is superoscillating
with $\R^2$ as superoscillation domain and limit function
$$
Y_\infty~: (t,x) \longmapsto e^{it a^p}\, e^{iax},
$$
($P_\infty(T,X) = T X$, $k_1(\infty) = a^p$, $k_2(\infty)=a$) uniformly on any compact in $\R^2$. For any $(\mu,\nu)\in \N^2$, the sequence of functions
\begin{multline}\label{sect3-eq6}
\frac{\partial^{\mu +\nu}}{\partial t^\mu
\partial x^\nu} (\psi_{p,N}(t,x,a)) =
i^{-\mu(1-p)} \frac{\partial^{p\mu + \nu}}{\partial x^{p\mu+\nu}}
(\psi_{p,N}(t,x,a)) \\
= i^{-\mu(1-p)}
((d/dW)^{p\mu+\nu}\odot \D_p(t))(F_N(\cdot,a)(x) \quad (N\in \N^*)
\end{multline}
converges uniformly on any compact in $\R^2$ to the function
$$
(t,x)\in \R^2 \mapsto ((d/dW)^{p\mu+\nu}\odot \D_p(t))(e^{ia^p t}\, e^{ia (\cdot)})(x).
$$
\end{theorem}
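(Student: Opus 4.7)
The plan is to recognize $\D_p(t)$ as a differential operator of the form covered by Lemma \ref{sect2-lem2}, then combine this with the $A_1(\C)$-convergence $F_N(\cdot,a)\to e^{ia(\cdot)}$ from Theorem \ref{sect2-thm1} to derive the three assertions at once. The formal symbol of $\D_p(t)=\sum_{\ell\geq 0}(i^{\ell(1-p)}t^\ell/\ell!)\,D^{p\ell}$ is
$$
\mathbb F(t,W) = \sum_{\ell=0}^\infty \frac{i^{\ell(1-p)}\,t^\ell}{\ell!}\, W^{p\ell} = \exp(i^{1-p}\, t\, W^p),
$$
which is entire in $W$ and satisfies $|\mathbb F(t,W)|\leq \exp(|t|\,|W|^p)$. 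Taking as parameter set any compact interval $\mathscr T=[-T,T]\subset \R$, the estimate \eqref{sect2-eq2bis} holds with $A=1$, $B=T$ and exponent $p\geq 1$, so Lemma \ref{sect2-lem2} applies and shows that $\D_p(t)$ is continuous from $A_1(\C)$ into itself, uniformly for $t$ in any compact of $\R$. This settles the first claim.

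For the superoscillation statement I would observe that the formal computation \eqref{sect3-eq5} becomes the genuine identity $\psi_{p,N}(t,\cdot,a)=\D_p(t)(F_N(\cdot,a))$ in $A_1(\C)$. Combining Theorem \ref{sect2-thm1} with the continuity of $\D_p(t)$ (uniform in $t$ on compacts) yields
$$
\psi_{p,N}(t,\cdot,a)\stackrel{N\to\infty}{\longrightarrow}\D_p(t)\bigl(e^{ia(\cdot)}\bigr) = e^{it\, a^p}\, e^{ia(\cdot)}
$$
in $A_1(\C)$, uniformly for $t$ in any compact of $\R$. Since $A_1(\C)$-convergence entails convergence in $H(\C)$, hence uniform convergence on every compact of $\C$, combining with the uniformity in $t$ produces uniform convergence on every compact of $\R^2$ to $(t,x)\mapsto e^{ita^p}e^{iax}$, identifying $\R^2$ as the superoscillation domain with $P_\infty(T,X)=TX$, $k_1(\infty)=a^p$ and $k_2(\infty)=a$.

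For the last assertion about derivatives, the key is the PDE \eqref{sect3-eq4}, namely $i^{p-1}\partial_t\psi_{p,N}=\partial_x^p\psi_{p,N}$, which inductively reduces $\partial_t^\mu\partial_x^\nu\psi_{p,N}$ to a scalar multiple of $\partial_x^{p\mu+\nu}\psi_{p,N}$ and thus establishes the first equality in \eqref{sect3-eq6}; the second equality is then just a rewriting of this derivative as $(d/dW)^{p\mu+\nu}\odot \D_p(t)$ applied to $F_N(\cdot,a)$ and evaluated at $x$. Since differentiation with respect to $W$ is continuous on $A_1(\C)$ (its symbol $W$ trivially satisfies the hypothesis of Lemma \ref{sect2-lem2}), the composition $(d/dW)^{p\mu+\nu}\odot\D_p(t)$ remains continuous on $A_1(\C)$, uniformly for $t$ in compacts of $\R$. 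Applying it to the $A_1(\C)$-convergent sequence $\{F_N(\cdot,a)\}_{N\geq 1}$ and invoking once more the passage from $A_1(\C)$-convergence to locally uniform convergence on $\C$ gives the required locally uniform convergence on $\R^2$. The only genuine obstacle here is essentially bookkeeping: matching the growth of $\mathbb F(t,W)$ to the template of Lemma \ref{sect2-lem2} and tracking the uniformity of all estimates in $t$; once this is in place, every part of the statement falls out as a direct consequence of Theorem \ref{sect2-thm1}.
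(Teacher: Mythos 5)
Your proposal is correct and follows essentially the same route as the paper: continuity of $\D_p(t)$ via Lemma \ref{sect2-lem2} applied to the symbol $\mathbb F(t,W)=\exp(i^{1-p}tW^p)$, the $A_1(\C)$-convergence $F_N(\cdot,a)\to e^{ia(\cdot)}$ from Theorem \ref{sect2-thm1} pushed through $\D_p(t)$ uniformly for $t$ in compacts, the Cauchy--Kowalevski equation \eqref{sect3-eq4} to rewrite $\partial_t^\mu\partial_x^\nu$ as an $x$-derivative, and continuity of $d/dW$ on $A_1(\C)$ to conclude. Your explicit verification of the growth bound $|\mathbb F(t,W)|\le e^{|t|\,|W|^p}$ and the reduction to compact $t$-intervals merely makes precise what the paper asserts more tersely; the underlying argument is identical.
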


\begin{proof} The first assertion follows from Lemma \ref{sect2-lem2}
with $\R_t= \mathscr T$ as set of parameters and $p\geq 1$ as order of the symbol of the differential operator $\D_p(t)$ as a differential operator in $W$. Since $\{z\mapsto F_N(z,a)\}_{N\geq 1}$ converges to $z\mapsto e^{iaz}$ in $A_1(\C)$ (see theorem \ref{sect2-thm1}), the sequence $\{z\mapsto \D_p(t)(F_N(\cdot,a))(z)\}_{N\geq 1}$ converges towards $z\mapsto \D_p(t)(e^{ia(\cdot)})(z)$ locally uniformly with respect to $t\in \R$. One can check that
$\D_p(t)(e^{ia(\cdot)})(z) = e^{ia^pt} e^{iaz}$ thanks to an immediate computation. Since
$(1-2j/N)^p$ and $(1-2j/N)$ lie in $[-1,1]$ for any $j\in \{0,...,N\}$
and $a\in \R \setminus [-1,1]$, the generalized Fourier sequence \eqref{sect3-eq3} is superoscillating with $P_\infty(T,X)=TX$,
$k_1(\infty)=a^p$ and $k_2(\infty)=a$, the superoscillation domain being here $\R^2$. The expressions of the partial derivatives in $t$ in terms of the partial derivatives in $x$ in \eqref{sect3-eq6} follow from the fact that $\psi_{p,N}(\cdot,\cdot,a)$ satisfies the partial differential equation in the Cauchy-Kowalevski problem \eqref{sect3-eq4}. The last assertion in the theorem results from the continuity of
the differentiation $d/dz$ as an operator from $A_1(\C)$ into itself.
\end{proof}
\vskip 2mm
\noindent
Let now $P\in \R[X]$ be an even polynomial $P(X)=\gamma_0 + \gamma_1 X^2 +\cdots + \gamma_{2d'} X^{2d'}$ and $a\in \R\setminus [-1,1]$. Consider in this case the generalized Fourier sequence
\begin{equation}\label{sect3-eq7}
\Big\{\psi_{P,N}(\cdot,\cdot,a)~: (t,x) \in \R^2 \longmapsto
\sum\limits_{j=0}^N C_j(N,a)\, e^{i P(1-2j/N)\, t}\, e^{i(1-2j/N)x}\Big\}_{N\geq 1}.
\end{equation}
As in the previous case, an easy computation shows that the function $\psi_{P,N}(\cdot,\cdot,a)$ is the unique global solution (on the whole space $\R^2$) of the Cauchy-Kowalevski problem
\begin{equation}\label{sect3-eq8}
\Big(i \frac{\partial}{\partial t} -
\check P\Big(\frac{\partial}{\partial x}\Big)\Big) (\psi) \equiv 0,\quad
[\psi(t,x)]_{|t=0} = F_N(x,a)
\end{equation}
where $\check P=\sum_{\kappa'=0}^{d'} (-1)^{\kappa'+1} \gamma_{2\kappa'}\, X^{2\kappa'}$, and the partial differential operator is here of Schr\"odinger type.
Let us introduce the differential operator $\D_P(t)$ defined as
$$
\D_P(t) = \bigodot_{\kappa' = 0}^{d'}
\big(\sum\limits_{\ell =0}^\infty
\frac{(i^{1-2\kappa'}\, t \gamma_{2\kappa'})^\ell}{\ell!}\, (d/dW)^{2\kappa'\ell}\big)
$$
with symbol in $A_{2d'}(\C_W)$ (the set of parameters $\mathscr T$ being again $\mathscr T=\R_t$).

\begin{theorem}\label{sect3-thm2}
Let $P\in \R[X]$ be an even polynomial with degree $2d'$. 
For any $\lambda \in \R$, the Cauchy-Kowalevski problem (of Schr\"odinger type)
\begin{equation}\label{sect3-eq9}
\Big(i \frac{\partial}{\partial t} -
\check P\Big(\frac{\partial}{\partial x}\Big)\Big) (\psi) \equiv 0,\quad
[(t,x)\mapsto \psi(t,x)]_{|t=0} = [x\mapsto e^{i\lambda x}]
\end{equation}
admits as unique global solution in $\R^2$ the function
$(t,x) \mapsto \varphi_{\lambda} (t,x) = e^{it P(\lambda)} e^{i\lambda x}$. One has $\psi_{P,N}(\cdot,\cdot,\lambda) =
\sum_{j=0}^N C_j(N,\lambda)\, \varphi_{1-2j/N}$ and the sequence
$\{(t,x)\mapsto \psi_{P,N}(t,x,\lambda)\}_{N\geq 1}$ converges uniformly on any compact set in $\R^2$ to
$(t,x) \mapsto e^{itP(\lambda)} e^{i\lambda x}$.
For any $(\mu,\nu)\in \N^2$, the sequence of functions
\begin{multline}\label{sect3-eq10}
\frac{\partial^{\mu +\nu}}{\partial t^\mu
\partial x^\nu} (\psi_{P,N}(t,x,\lambda)) =
(-i)^\mu \Big(\check P\Big(
\frac{\partial}{\partial x}\Big)\Big)^{\odot^\mu}
\odot \Big(\frac{\partial}{\partial x}\Big)^{\odot^\nu} (\psi_{p,N}(t,x,\lambda)\\
= (-i)^\mu
\Big(\big(\check P (d/dW)\big)^{\odot^\mu}
\odot (d/dW)^\nu
\odot \D_P(t)\Big)\big(F_N(\cdot,\lambda)\big)(x) \quad (N\in \N^*)
\end{multline}
converges uniformly on any compact in $\R^2$ to the function
$$
(t,x)\in \R^2 \mapsto
(-i)^\mu
\Big(\big(\check P (d/dW)\big)^{\odot^\mu}
\odot \big(d/dW\big)^{\odot^\nu}
\odot \D_P(t)\Big)
\big(e^{iP(\lambda) t}\, e^{i\lambda (\cdot)}\big)(x).
$$
\end{theorem}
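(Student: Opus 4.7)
The plan is to reproduce the argument given for Theorem \ref{sect3-thm1}, replacing $\D_p(t)$ by the operator $\D_P(t)$, whose symbol will be controlled via Lemma \ref{sect2-lem2} with $p = \max(2d',1)$. First, a direct computation verifies that $\varphi_\lambda(t,x) = e^{itP(\lambda)} e^{i\lambda x}$ is a global solution of \eqref{sect3-eq9}: indeed, from the definition of $\check P$ we get
$$
\check P(i\lambda) = \sum_{\kappa'=0}^{d'} (-1)^{\kappa'+1} (-1)^{\kappa'}\, \gamma_{2\kappa'}\, \lambda^{2\kappa'} = -P(\lambda),
$$
hence $\check P(\partial/\partial x)(\varphi_\lambda) = -P(\lambda)\, \varphi_\lambda = i\, \partial_t \varphi_\lambda$. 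Uniqueness in the holomorphic class is furnished by the Cauchy-Kowalevski theorem applied to this constant-coefficient equation with entire initial datum; combining this uniqueness with linearity on the initial datum $F_N(x,\lambda) = \sum_j C_j(N,\lambda)\, e^{i(1-2j/N)x}$ produces the identity $\psi_{P,N}(\cdot,\cdot,\lambda) = \sum_{j=0}^N C_j(N,\lambda)\, \varphi_{1-2j/N}$.

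Next, along the lines of \eqref{sect3-eq5}, I would establish the key representation
$$
\psi_{P,N}(t,z,\lambda) = \D_P(t)\bigl(F_N(\cdot,\lambda)\bigr)(z) \qquad ((t,z)\in \R \times \C).
$$
This reduces to observing that for each $j$ and each $\kappa'$, the relation $i^{1-2\kappa'}\,(i(1-2j/N))^{2\kappa'} = i\,(1-2j/N)^{2\kappa'}$ implies that the $\kappa'$-th factor of $\D_P(t)$ applied to $e^{i(1-2j/N)W}$ at $W=z$ produces $\exp(it\gamma_{2\kappa'} (1-2j/N)^{2\kappa'})\, e^{i(1-2j/N)z}$; taking the product over $\kappa' = 0,\dots,d'$ restores the factor $e^{itP(1-2j/N)} e^{i(1-2j/N)z}$ appearing in the definition of $\psi_{P,N}$. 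The formal symbol of $\D_P(t)$ is $(t,W)\mapsto \exp\bigl(\sum_{\kappa'=0}^{d'} i^{1-2\kappa'}\, t\, \gamma_{2\kappa'}\, W^{2\kappa'}\bigr)$, which is majorized on $\mathscr T \times \C_W$ by $A_{\mathscr T}\, \exp(B_{\mathscr T} |W|^{2d'})$ for any bounded $\mathscr T \subset \R_t$. Hence Lemma \ref{sect2-lem2}, applied with $p = \max(2d',1)$, shows that $\D_P(t)$ acts continuously from $A_1(\C)$ into itself, uniformly with respect to $t \in \mathscr T$.

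Combining this continuity with Theorem \ref{sect2-thm1}, which provides the convergence $F_N(\cdot,\lambda) \to e^{i\lambda(\cdot)}$ in $A_1(\C)$, one obtains
$$
\psi_{P,N}(t,\cdot,\lambda) = \D_P(t)\bigl(F_N(\cdot,\lambda)\bigr) \stackrel{N\to \infty}{\longrightarrow} \D_P(t)\bigl(e^{i\lambda(\cdot)}\bigr) = e^{itP(\lambda)}\, e^{i\lambda(\cdot)}
$$
in $A_1(\C)$, uniformly for $t$ in any compact subset of $\R$. In particular the convergence is uniform on any compact of $\R^2_{t,x}$, which identifies \eqref{sect3-eq7} as a complex superoscillating sequence with superoscillation domain $\R^2$, limit function $(t,x)\mapsto e^{itP(\lambda)} e^{i\lambda x}$, $P_\infty(T,X) = TX$, $k_1(\infty) = P(\lambda)$, and $k_2(\infty) = \lambda$.

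Finally, for the convergence of derivatives I would use the PDE to convert time derivatives via $\partial_t^\mu = (-i)^\mu\, \check P(\partial_x)^{\odot\mu}$, which yields the identity \eqref{sect3-eq10}. Since both $d/dW$ and the polynomial-symbol operator $\check P(d/dW)$ act continuously on $A_1(\C)$, so does the composite $\check P(d/dW)^{\odot\mu} \odot (d/dW)^{\odot\nu} \odot \D_P(t)$, still uniformly in $t$ on bounded subsets of $\R$. Applying this composite continuous operator to the $A_1(\C)$-convergent sequence $\{F_N(\cdot,\lambda)\}_N$ delivers the stated locally uniform convergence on $\R^2_{t,x}$. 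The only real obstacle, as in Theorem \ref{sect3-thm1}, is securing the symbol estimate that enables the invocation of Lemma \ref{sect2-lem2}; once that is in place the argument is a routine chaining of continuous operators on $A_1(\C)$.
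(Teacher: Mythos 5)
Your proposal is correct and follows essentially the same route as the paper: verify directly that $\varphi_\lambda$ solves \eqref{sect3-eq9}, identify $\psi_{P,N} = \D_P(t)(F_N(\cdot,\lambda))$, invoke Lemma \ref{sect2-lem2} (with the symbol of $\D_P(t)$ in $A_{2d'}(\C_W)$) for continuity of $\D_P(t)$ on $A_1(\C)$ locally uniformly in $t$, combine with Theorem \ref{sect2-thm1}, and then handle derivatives via the PDE plus continuity of $d/dz$ on $A_1(\C)$. You simply fill in a few more computational details (the identity $\check P(i\lambda)=-P(\lambda)$, the factorwise action of $\D_P(t)$ on $e^{i(1-2j/N)W}$, and the explicit symbol estimate) that the paper leaves implicit.
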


\begin{proof} One has
$$
\Big(i \frac{\partial}{\partial t} -
\check P\Big(\frac{\partial}{\partial x}\Big)\Big) (\varphi_{\lambda}) = (- P(\lambda) + P(\lambda))\, e^{it P(\lambda)} e^{i\lambda x} \equiv 0
$$
and $\varphi_{\lambda}(0,x)=e^{i\lambda x}$ for all $x\in \R$.
It follows from Lemma \ref{sect2-lem2} that the operator $\D_P(t)$ acts continuously on $A_1(\C)$, locally uniformly with respect to the parameter $t\in \R$. Since the sequence $\{z\in \C \mapsto F_N(\cdot,\lambda)\}_{N\geq 1}$ converges to $z\mapsto e^{i\lambda z}$ in $A_1(\C)$, the sequence $\{z\in \C \mapsto \D_P(t)(F_N(\cdot,\lambda))(z)\}_{N\geq 1}$ converges to $z\mapsto \D_P(t)(e^{i\lambda (\cdot)})(z)=e^{it P(\lambda)} e^{i\lambda z}$ in $A_1(\C)$ locally uniformly with respect to the parameter $t\in \R$. The first equality in \eqref{sect3-eq10}
follows from the fact that $\psi_{P,N}(\cdot,\cdot,\lambda)$
is solution of the Cauchy-Kowalevski problem \eqref{sect3-eq8}. The final assertion follows from the continuity of $d/dz~: A_1(\C)\rightarrow A_1(\C)$.
\end{proof}

\noindent
One can even drop the hypothesis about $P$ and take $P=\sum_{\kappa=0}^d \gamma_\kappa X^\kappa$ as polynomial of degree $d$ in $\C[X]$ with associate polynomial $\check P = \sum_{\kappa=0}^d (-i)^{\kappa+1} \gamma_\kappa X^\kappa$. The Cauchy-Kowalevski problem
\eqref{sect3-eq8} is not anymore of the Schr\"odinger type (since
$\check P \notin \R[X]$ in general), which makes the only difference with the case previously studied. Nevertheless, one can state exactly the same result, with this time
$$
\D_P(t) =  \bigodot_{\kappa = 0}^{d}
\big(\sum\limits_{\ell =0}^\infty
\frac{(i^{1-\kappa}\, t \gamma_{\kappa})^\ell}{\ell!}\, (d/dW)^{\kappa\ell}\big).
$$

\begin{theorem}\label{sect3-thm3}Let $P\in \C[X]$ be a polynomial of degree $d$.
All the assertions in Theorem \ref{sect3-thm2} are valid, except that
\eqref{sect3-eq8} is not  anymore a Cauchy-Kowalevski problem of the Schr\"odinger type. When $a\in \R\setminus [-1,1]$, the generalized Fourier sequence $\{x\mapsto \psi_{B,N}(t,x,a)\}_{N\geq 1}$ is superoscillating for any $t\in \R$. Moreover, given such $a$ and $P\in \R[X]$ such that $\sup_{[-1,1]} |P| \leq 1 < |P(a)|$, the generalized Fourier sequence
$$
\Big\{(t,x) \mapsto \psi_{P,N}(t,x,a) = \sum\limits_{j=0}^N
C_j(N,a)\, \varphi_{1-2j/N}(t,x)\Big\}_{N\geq 1}
$$
is superoscillating as a generalized Fourier sequence in two variables $(t,x)$, with $\R^2$ as domain of superoscillation.
\end{theorem}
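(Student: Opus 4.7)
The plan is to adapt the proof of Theorem \ref{sect3-thm2} to a general polynomial $P \in \C[X]$ of degree $d \geq 1$, the only substantive change being that the operator
\[
\D_P(t) = \bigodot_{\kappa = 0}^{d} \Big(\sum_{\ell=0}^\infty \frac{(i^{1-\kappa}\, t\, \gamma_\kappa)^\ell}{\ell!}\,(d/dW)^{\kappa\ell}\Big)
\]
now includes all powers $\kappa = 0,\ldots,d$, not only the even ones. My first step is to check that $\D_P(t)$ still acts continuously from $A_1(\C)$ into itself, uniformly for $t$ in compact subsets of $\R$. Since each factor has constant coefficients in $W$, all factors commute, and the composition has formal symbol
\[
\mathbb F(t,W) = \prod_{\kappa=0}^{d} e^{i^{1-\kappa}\, t\, \gamma_\kappa\, W^\kappa},
\]
an entire function of $W$ of order at most $d \geq 1$. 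Even when $P$ has complex coefficients (so the factors are not unimodular for $W \in \R$), the crude bound $|\mathbb F(t,W)| \leq \exp(\sum_\kappa |t\gamma_\kappa|\,|W|^\kappa) \leq A_K\, e^{B_K |W|^d}$ valid for $t$ in any compact $K \subset \R$ is enough to apply Lemma \ref{sect2-lem2} with $p = d$ and parameter set $\mathscr T = K$, yielding the required uniform continuity.

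Once this is in place, the statements parallel to Theorem \ref{sect3-thm2} follow by essentially the same reasoning. A direct expansion (each factor acting on $e^{i\lambda W}$ returns $e^{it\gamma_\kappa \lambda^\kappa}e^{i\lambda W}$, and the scalars multiply out) gives $\D_P(t)[e^{i\lambda(\cdot)}](z) = e^{itP(\lambda)}e^{i\lambda z} = \varphi_\lambda(t,z)$, whence by linearity $\psi_{P,N}(t,\cdot,a) = \D_P(t)[F_N(\cdot,a)]$. Theorem \ref{sect2-thm1} asserts that $F_N(\cdot,a) \to e^{ia(\cdot)}$ in $A_1(\C)$; combined with the $t$-uniform continuity of $\D_P(t)$, this yields $\psi_{P,N}(t,\cdot,a) \to \varphi_a(t,\cdot)$ in $A_1(\C)$, locally uniformly in $t \in \R$, and hence uniformly on every compact of $\R^2$. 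For derivatives one proceeds as in Theorem \ref{sect3-thm2}: continuity of $d/dz$ on $A_1(\C)$ takes care of $x$-derivatives, and the partial differential relation associated to $P$ (through $\check P$) rewrites any $t$-derivative of $\psi_{P,N}$ as a differential polynomial in its $x$-derivatives, so convergence propagates to every mixed partial derivative.

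The superoscillation claims are then almost immediate consequences. For fixed $t \in \R$ and $a \in \R\setminus[-1,1]$, the sequence $\{x \mapsto \psi_{P,N}(t,x,a)\}_{N\geq 1}$ is a CGFS in $x$ with frequencies $1-2j/N \in [-1,1]$ and complex coefficients $C_j(N,a)e^{itP(1-2j/N)}$; its uniform-on-compacts limit is $x\mapsto e^{itP(a)}e^{iax}$, the restriction to $\R$ of the trigonometric polynomial $X \mapsto e^{itP(a)}X$ which has no constant term and frequency $a \notin [-1,1]$, so Definition \ref{sect3-def2} is satisfied with $U^{\rm sosc}=\R$. For the two-variable claim, the added hypotheses $P\in \R[X]$ and $\sup_{[-1,1]}|P|\leq 1$ ensure that every frequency pair $(P(1-2j/N),1-2j/N) \in \R^2$ has both coordinates of absolute value $\leq 1$, while $|P(a)|>1$ and $|a|>1$ place the limit frequency pair $(P(a),a)$ in $(\R\setminus[-1,1])^2$; the limit $(t,x)\mapsto e^{itP(a)}e^{iax} = P_\infty(e^{iP(a)t}, e^{iax})$ with $P_\infty(T,X)=TX$ (a Laurent polynomial in two variables with no constant term) then verifies Definition \ref{sect3-def4} with $U^{\rm sosc}=\R^2$. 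The only point that is not entirely routine is the symbol estimate for $\mathbb F(t,W)$ in the complex-coefficient regime, and even that reduces to an elementary polynomial bound.
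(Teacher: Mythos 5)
Your proof is correct and follows essentially the same route as the paper, which simply observes that the argument of Theorem \ref{sect3-thm2} carries over verbatim to a general $P\in\C[X]$ of degree $d$ once one notes that the symbol of $\D_P(t)$ is still controlled by $A_K\,e^{B_K|W|^d}$ for $t$ in a compact $K\subset\R$, so Lemma \ref{sect2-lem2} applies with $p=d$. Your closing verification of Definitions \ref{sect3-def2} and \ref{sect3-def4} (with $P_\infty(T,X)=TX$, $\kappa(\infty)=P(a)$, $k(\infty)=a$) is precisely the paper's own concluding remark, just with the frequency-bound checks spelled out.
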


\begin{proof}
The proof follows that one of Theorem \ref{sect3-thm2}.
The fact that the sequence $\{x\mapsto \psi_{B,N}(t,x,a)\}_{N\geq 1}$ is superoscillating for any $t\in \R$ follows from the fact that it converges on any compact of $\R_x$ (locally uniformly in $t$)
to $x\mapsto e^{itP(a)}\, e^{iax}$.
As for the last assertion, to define $Y_\infty$ one takes $P_\infty (T,X)=T X$, $\kappa(\infty) = P(a)$ and $k(\infty) =a$ in Definition
\ref{sect3-def4}.
\end{proof}

\noindent
Let now $E(X) = \sum_{\kappa=0}^\infty \gamma_\kappa X^\kappa \in \C[[X]]$ be a power series with radius of convergence $\rho\in ]0,+\infty]$, together with the convolution operator
$$
\D_E(t) := \lim\limits_{d\rightarrow +\infty}
\bigodot\limits_{\kappa=0}^d \Big(\sum\limits_{\ell =0}^\infty
\frac{(i^{1-\kappa} t \gamma_\kappa)^\ell}{\ell!}
\, (d/dW)^{\kappa \ell}\Big)
$$
with formal symbol
$$
\F_E(t)~: W \longmapsto \exp \Big( it \sum\limits_{\kappa=0}^\infty
i^{1-\kappa}\, \gamma_\kappa W^\kappa\Big).
$$
Since $F$ and $\sum_{\kappa =0}^\infty i^{1-\kappa} \gamma_\kappa X^\kappa$ share the same radius of convergence $\rho>0$,
$\F_E(t)$ realizes, for each $t\in \R$) an holomorphic function in $D(0,\rho) \subset \C_W$ (with Taylor series about $0$ depending on $t\in \R$).
More precisely, one has
$$
\forall\, t,W \in \R \times D(0,\rho),\quad
\F_E(t)(W) = \sum\limits_{j=0}^\infty \Big(\sum\limits_{\kappa=0}^\infty b_{j,\kappa} t^\kappa\Big) W^j = \sum\limits_{j=0}^\infty b_j (t)\, W^j,
$$
where, for $R>0$, the radius of convergence of the power series
$\sum_{j\geq 0} \Big(\sum_{\kappa \geq 0} |\beta_{j,\kappa}| R^\kappa\Big) X^j$ is at least equal to $\rho$.
\vskip 2mm
\noindent
For any $\lambda \in ]-\rho,\rho[$ and $z\in \C$, one has formally
\begin{multline}\label{sect3-eq11}
e^{it E(\lambda)} e^{iz\lambda} =
\lim\limits_{d\rightarrow +\infty}
\prod\limits_{\kappa=0}^d \Big(\sum\limits_{\ell =0}^\infty
\frac{(i^{1-\kappa} t \gamma_\kappa)^\ell}{\ell!} (i\lambda)^{\kappa \ell}\Big)\, e^{i\lambda z} \\
= \lim\limits_{d\rightarrow +\infty}
\prod\limits_{\kappa=0}^d \Big(\sum\limits_{\ell =0}^\infty
\frac{(i^{1-\kappa} t \gamma_\kappa)^\ell}{\ell!} (d/dW)^{\kappa \ell}\Big)\Big) (e^{i \lambda (\cdot)})(z) = \D_E(t) (e^{i\lambda (\cdot )})(z).
\end{multline}
One requires the following lemma in order to justify the formal relations \eqref{sect3-eq11}.

\begin{lemma}\label{sect3-lem1}
When $\rho=+\infty$, the convolution operator $\D_E(t)$ acts continuously locally uniformly with respect to $t\in \R$ from $A_1(\C)$ into itself. When $\rho \in ]0,+\infty[$ it acts continuously locally uniformly with respect to $t\in \R$ from the space
$$
\{f\in A_1(\C)\,;\, \forall\, \varepsilon>0,\
\exists\, C_\varepsilon>0\ {\rm such\ that}\ |f(W)|\leq C_\varepsilon
e^{(\rho-\varepsilon)|W|}\Big\} =\lim_{\longleftarrow} A_1^{B_{\rho,n}}(\C)
$$
(where $\{B_{\rho,n}\}_{n\geq 1}$ is a strictly increasing sequence  converging to $\rho$) into itself.
\end{lemma}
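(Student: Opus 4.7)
The plan is to recast $\D_E(t)$ as a single infinite-order operator $\sum_{j\geq 0} b_j(t)\,(d/dW)^j$ whose formal symbol $\F_E(t)$ is holomorphic in $D(0,\rho)\subset \C_W$, to control the coefficients $b_j(t)$ by a majorant argument, and then to adapt the strategy of Lemma \ref{sect2-lem2} (based on Lemma \ref{sect2-lem1} and Proposition \ref{sect2-prop1}) to establish continuity on the appropriate inductive or projective system.

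First I would observe that each $\D_\kappa(t)$ is a convergent power series in $d/dW$, so the factors pairwise commute and the finite compositions $\bigodot_{\kappa=0}^d\D_\kappa(t)$ coincide with the convolution operator whose formal symbol is the product $\prod_{\kappa=0}^d \exp(i^{1-\kappa}t\gamma_\kappa W^\kappa)$. Since these partial products converge to $\F_E(t)(W)$ in $H(D(0,\rho))$ as $d\to +\infty$, this identifies $\D_E(t)$ with $\sum_{j\geq 0} b_j(t)(d/dW)^j$. Introducing the majorant $G(W):=\sum_\kappa |\gamma_\kappa|W^\kappa$ (of radius $\rho$), the classical majorant principle for composition of power series gives the coefficient-wise inequality $|b_j(t)|\leq [\exp(|t|\,G)]_j$, so that, for every $\beta\in[0,\rho)$ (any $\beta\geq 0$ if $\rho=+\infty$),
\[
\sum_{j=0}^\infty |b_j(t)|\,\beta^j \leq \exp\bigl(|t|\,G(\beta)\bigr),
\]
the right-hand side being locally bounded in $t\in\R$.

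Next, given $f\in A^1_{C,\beta}(\C)$ with $|a_\ell|\leq C\,\beta^\ell/\ell!$ (Lemma \ref{sect2-lem1} with $p=1$), I would repeat the calculation \eqref{sect2-eq3}--\eqref{sect2-eq4} from Lemma \ref{sect2-lem2}: the double series
\[
\D_E(t)(f)(W) = \sum_{\ell=0}^\infty\Big(\sum_{j=0}^\infty \frac{(j+\ell)!}{\ell!}\,b_j(t)\,a_{j+\ell}\Big)W^\ell
\]
would be absolutely convergent thanks to the majorant above (which justifies Fubini), each Taylor coefficient being bounded by $C\,\exp(|t|\,G(\beta))\,\beta^\ell/\ell!$. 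Thus $\D_E(t)(f)\in A^1_{C',\beta}(\C)$ with $C'=C\,\exp(|t|\,G(\beta))$, locally uniformly in $t\in\R$. When $\rho=+\infty$ this holds for every $\beta$, hence $\D_E(t)$ sends each step $A_1^{B_n}(\C)$ continuously into itself, and Proposition \ref{sect2-prop1} (combined with the last part of the proof of Lemma \ref{sect2-lem2}) yields the continuity from $A_1(\C)$ into itself. When $\rho\in(0,+\infty)$, the estimate is only available for $\beta<\rho$, so $\D_E(t)$ defines a continuous self-map at each step $A_1^{B_{\rho,n}}(\C)$ with $B_{\rho,n}<\rho$, hence by universality of the projective limit, a continuous self-map of $\lim_{\longleftarrow} A_1^{B_{\rho,n}}(\C)$, again locally uniformly in $t\in\R$.

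The main obstacle is the rigorous identification of the compositional definition of $\D_E(t)$ with the single operator $\sum_j b_j(t)(d/dW)^j$, equivalently the interchange of the infinitely many summations involved in evaluating $\D_E(t)(f)(W)$. The commutativity of the $\D_\kappa(t)$'s together with the majorant bound $|b_j(t)|\leq [\exp(|t|G)]_j$ produce the absolute convergence of the relevant double series precisely when $\beta<\rho$, which is also what forces the domain to shrink to the subradius projective limit in the case $\rho<+\infty$.
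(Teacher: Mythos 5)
Your proposal is correct and follows essentially the same route as the paper: both arguments rest on a majorant bound for the Taylor coefficients $b_j(t)$ of the formal symbol $\F_E(t)$, followed by the computation from the proof of Lemma \ref{sect2-lem2} (the double-sum rearrangement \eqref{sect2-eq3}--\eqref{sect2-eq4}) and the convergence criterion of Proposition \ref{sect2-prop1}. The paper phrases the majorant implicitly by asserting that $\sum_{j\geq 0}\bigl(\sum_{\kappa\geq 0}|b_{j,\kappa}|R^\kappa\bigr)X^j$ has radius of convergence at least $\rho$, whereas you make this explicit by exhibiting $\exp(|t|\,G)$ with $G=\sum_\kappa|\gamma_\kappa|W^\kappa$ as a coefficient-wise dominating series; these are the same estimate in different notation. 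One small cosmetic point: $\D_E(t)$ need not be a self-map of each fixed step $A_1^{B_{\rho,n}}(\C)$, but rather maps $A_1^{B_{\rho,n}}(\C)$ continuously into $A_1^{B_{\rho,m}}(\C)$ for some $m>n$ (since passing from the sup-norm characterization to the Taylor-coefficient characterization of Lemma \ref{sect2-lem1} costs an arbitrarily small loss in the exponent); this is still exactly what is needed for continuity on the projective limit, so your conclusion stands.
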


\begin{proof}
Suppose first that $\rho=+\infty$. Let $R>0$ and $K\subset [-R,R]\subset \R_t$ be a compact set. One recalls here that the radius of convergence of the power series
$\sum_{j\geq 0} \big(\sum_{\kappa\geq 0} |b_{j,\kappa}| R^\kappa\big) X^j$ equals $+\infty$.
Let $\gamma>0$, $\beta>0$ and $f=\sum_{\ell \geq 0} f_\ell\, W^\ell \in A_1^{\gamma,\beta}(\C)$.
One can check as in the proof of Lemma \ref{sect2-lem2} (compare to \eqref{sect2-eq4}) that, for any $t\in K$ and $j\in \N$,
$$
\sum\limits_{j=0}^\infty
\frac{(j+\ell)!}{\ell!} |b_j(t)| \, |f_{\ell +j}|
\leq \gamma \frac{\beta^\ell}{\ell!}\, \sum\limits_{j=0}^\infty
\Big(\sum\limits_{\kappa=0}^\infty |b_{j,\kappa}| R^\kappa\Big)
\beta^j
= K_{\D_E}(\beta,\gamma)\,
\frac{\beta^\ell}{\ell!}.
$$
This is indeed enough to conclude as in the proof of Lemma \ref{sect2-lem2} that $\D_E(t)$ acts continuously locally uniformly in $t$ from
$A_1(\C)$ into itself.
\hfil\break
Consider now the case where $\rho\in ]0,+\infty[$. For any $R>0$, the radius of convergence of the power series $\sum_{j\geq 0} \big(\sum_{\kappa\geq 0} |b_{j,\kappa}|\, R^\kappa\big) X^j$
is now at least equal to $\rho$. Repeating the preceeding argument (but taking now $\beta \leq \rho-\varepsilon$ for some $\varepsilon >0$ arbitrary small), one concludes that $\D_E(t)$ acts continuously locally uniformly in $t$ from $\displaystyle{\lim_{\longleftarrow} A_1^{B_{\rho,n}}(\C)}$ into itself.
\end{proof}

\noindent
We can now state the last result of this section.

\begin{theorem}\label{sect3-thm4}
Let $E=\sum_{\kappa=0}^\infty \gamma_\kappa X^\kappa \in \C[[X]]$ be a power series with radius of convergence $\rho\in ]2,+\infty]$. Then
$\check E := \sum_{\kappa =0}^\infty (-i)^{\kappa +1}\gamma_\kappa D^\kappa$ acts continuously from $\displaystyle{\lim_{\longleftarrow} A_1^{B_{\rho,n}}(\C)}$ into itself. For any $t\in \R$ and $a\in \R$ with $1<|a|<\rho-1$, the generalized Fourier sequence
$$
\Big\{x \in \R \longmapsto \psi_{E,N} (t,x,a) = \sum\limits_{j=0}^N C_j(N,a)\,
e^{i E(1-2j/N)t} e^{ix(1-2j/N)}\Big\}_{N\geq 1}
$$
is superoscillating. Moreover, for any such $a$ and $(\mu,\nu)\in \N^2$, the sequence of functions
\begin{multline}\label{sect3-eq12}
\frac{\partial^{\mu +\nu}}{\partial t^\mu
\partial x^\nu} (\psi_{E,N}(t,x,a)) =
(-i)^\mu \Big(\check E^{\odot^{\mu}}
\odot (d/dW)^\nu\Big)\big(\psi_{p,N}(t,\cdot,a)\big)(x)\\
= (-i)^\mu
\Big(\check E^{\odot^{\mu}}
\odot (d/dW)^\nu \odot \D_E(t)\Big)\big(F_N(\cdot,a)\big)(x)
\end{multline}
converges then uniformly on any compact in $\R^2_{t,x}$ to the fonction
$$
(t,x) \longmapsto (-i)^\mu
\Big(\check E^{\odot^{\mu}}
\odot (d/dW)^\nu \odot \D_E(t)\Big)\big(e^{it E(a)}
\, e^{ia(\cdot)}\big)(x).
$$
\end{theorem}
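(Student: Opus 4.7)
The plan is to follow the blueprint of Theorems \ref{sect3-thm2} and \ref{sect3-thm3}, replacing the polynomial $P$ by the entire-series symbol $E$ and tracking carefully the role played by the hypothesis $\rho > 2$ together with the constraint $1<|a|<\rho-1$.

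First I would establish continuity of the infinite-order differential operator $\check E$ on the projective limit $\displaystyle\lim_{\longleftarrow} A_1^{B_{\rho,n}}(\C)$. For $f(W)=\sum_{\ell \geq 0} f_\ell W^\ell \in A_1^\beta(\C)$ with $|f_\ell|\leq \gamma\beta^\ell/\ell!$ and $\beta<\rho$, I would compute termwise
\[
\check E(f)(W)=\sum_{\ell =0}^\infty \Big(\sum_{\kappa =0}^\infty (-i)^{\kappa+1}\gamma_\kappa \frac{(\ell+\kappa)!}{\ell!} f_{\ell+\kappa}\Big) W^\ell,
\]
the inner series being absolutely convergent since
$\sum_\kappa |\gamma_\kappa|\beta^\kappa<+\infty$ (this is where $\beta<\rho$ is used). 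The resulting estimate $|\check E(f)(W)|\leq \gamma\big(\sum_\kappa |\gamma_\kappa|\beta^\kappa\big) e^{\beta|W|}$ shows that $\check E$ maps $A_1^\beta(\C)$ continuously into itself for every $\beta<\rho$, and passing to the projective limit (taking $\beta=\rho-\varepsilon\to\rho$) yields the announced continuity. The same argument shows that finite compositions $\check E^{\odot \mu}\odot (d/dW)^\nu$ are continuous on the same space.

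Next I would justify the representation $\psi_{E,N}(t,\cdot,a)=\D_E(t)(F_N(\cdot,a))$ by expanding each exponential $e^{itE(1-2j/N)}$ as in \eqref{sect3-eq5}--\eqref{sect3-eq11}; absolute convergence of the expansion is granted because $|1-2j/N|\leq 1<\rho$. The point where the hypothesis $1<|a|<\rho-1$ enters is twofold: it ensures on one hand that $E(a)$ makes sense (so that the candidate limit $e^{itE(a)} e^{iax}$ is well defined), and on the other hand that Lemma \ref{sect2-lem5} places the sequence $\{F_N(\cdot,a)\}$ in the common Banach space $A_1^{|a|+1}(\C)$, which embeds into $\displaystyle\lim_{\longleftarrow} A_1^{B_{\rho,n}}(\C)$ because $|a|+1<\rho$. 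Theorem \ref{sect2-thm1} then gives $F_N(\cdot,a)\to e^{ia(\cdot)}$ in this projective limit space, and Lemma \ref{sect3-lem1} (locally uniformly in $t\in\R$) propagates this convergence through $\D_E(t)$, yielding $\psi_{E,N}(t,\cdot,a)\to e^{itE(a)}e^{ia(\cdot)}$ in $\displaystyle\lim_{\longleftarrow} A_1^{B_{\rho,n}}(\C)$, hence uniformly on compact subsets of $\R^2_{t,x}$. Since the frequencies $1-2j/N$ lie in $[-1,1]$ while $k(\infty)=a\notin [-1,1]$, this is precisely the superoscillation property in the sense of Definition \ref{sect3-def4}, with $P_\infty(T,X)=TX$, $k_1(\infty)=E(a)$ and $k_2(\infty)=a$.

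For the convergence of the derivatives \eqref{sect3-eq12}, I would exploit the identity $i\partial_t\psi_{E,N}=\check E(\partial_x)\psi_{E,N}$, which follows from termwise differentiation of the finite sum defining $\psi_{E,N}$ (legitimate because each term is a pure exponential in $x$). Iterating $\mu$ times yields $\partial_t^\mu\partial_x^\nu \psi_{E,N}=(-i)^\mu (\check E(d/dW))^{\odot \mu}\odot (d/dW)^{\odot \nu}(\psi_{E,N}(t,\cdot,a))$, and composing with $\D_E(t)$ gives the second equality of \eqref{sect3-eq12}. The locally uniform convergence to the stated limit then follows by applying the continuous operator $(\check E)^{\odot \mu}\odot (d/dW)^{\odot \nu}\odot \D_E(t)$ (continuous on $\displaystyle\lim_{\longleftarrow} A_1^{B_{\rho,n}}(\C)$ by the first paragraph and Lemma \ref{sect3-lem1}) to the convergent sequence $\{F_N(\cdot,a)\}$. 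The main technical obstacle I anticipate is the bookkeeping in paragraph one: one must check that iterating $\check E$ does not force the radius of convergence $\rho$ of the auxiliary symbol to shrink, and that Lemma \ref{sect3-lem1} can indeed be invoked on the projective limit rather than on $A_1(\C)$ itself, which is the reason the hypothesis $\rho>2$ (equivalently, existence of admissible $a$ with $1<|a|<\rho-1$) is imposed.
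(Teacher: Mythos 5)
Your proof is correct and follows essentially the same route as the paper's: Lemma~\ref{sect3-lem1} for the continuity of $\check E$ and of $\D_E(t)$ on $\lim_{\longleftarrow}A_1^{B_{\rho,n}}(\C)$, Theorem~\ref{sect2-thm1} together with the bound $|F_N(z,a)|\leq e^{(|a|+1)|z|}$ from Lemma~\ref{sect2-lem5} to place the initial data in the right space and get convergence there (this is exactly where $|a|+1<\rho$ is used), and the intertwining identity $i\partial_t\psi_{E,N}=\check E\,\psi_{E,N}$ to handle the derivatives via continuity of $d/dW$. The only imprecision is the appeal to Definition~\ref{sect3-def4} for the superoscillation claim: the theorem asserts that $\{x\mapsto\psi_{E,N}(t,x,a)\}_{N}$ is superoscillating for fixed $t$, which is the one-variable Definition~\ref{sect3-def2} (requiring only $a\notin[-1,1]$); the two-variable version with $k_1(\infty)=E(a)$ requires the additional hypothesis $\sup_{[-1,1]}|E|\leq 1<|E(a)|$ and is treated in the Remark following the theorem, not in the theorem itself.
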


\begin{proof}
The fact that $\check E$ acts continuously from
$\lim_{\longleftarrow} A_1^{B_{\rho,n}}(\C_W)$ into itself follows from Lemma \ref{sect3-lem1}, considering just
$\check E$ (independent of the parameter $t$) instead of $\D_E(t)$.
For any $\lambda\in \R$ with $|\lambda|<\rho$, the operator
$\check E$ then acts on $e^{i\lambda(\cdot)}$ and it is immediate to check that for any $t\in \R$
\begin{equation}\label{sect3-eq13}
\forall\, (t,x)\in \R^2,\
\Big[\Big(i\, \frac{\partial}{\partial t} - \check E\Big)
\big(e^{it E(\lambda)}\, e^{i\lambda W}\big)\Big]_{W=x} = 0~;
\end{equation}
moreover $\big[(t,x) \mapsto e^{it E(\lambda)}\, e^{i\lambda x}\big]_{t=0}$ is $x\mapsto e^{i\lambda x}$. Therefore, for any $a\in \R$ and $N\in \N^*$, one has by linearity (since $\rho>1$)
\begin{equation}\label{sect3-eq14}
\forall\, (t,x)\in \R^2,\
\Big[\Big(i\, \frac{\partial}{\partial t} - \check E\Big)
\Big(\sum\limits_{j=0}^N C_j(N,a) e^{i E(1-2j/N)t}
e^{i(1-2j/N) W}\Big)\Big]_{W=x} = 0.
\end{equation}
Lemma \eqref{sect3-lem1} (applied this time with $\D_E(t)$), combined with Theorem \ref{sect2-thm1} and the estimates in the first line of
\eqref{sect2-eq9} in Lemma \ref{sect2-lem5}, imply that as soon as one has $|a|<\rho-1$
the sequence $\{z\in \C \mapsto \D_E(t)(F(\cdot,a))(z)\}_{N\geq 1}$ converges (locally uniformly with respect to the parameter $t$) to $z\mapsto e^{iaz}$ in
$A_1(\C)$. The last assertion in the particular case $\mu=\nu=0$ follows. The first equality in \eqref{sect3-eq12} comes from the identity \eqref{sect3-eq14}, while the second one comes
from \eqref{sect3-eq11} (as justified by Lemma \ref{sect3-lem1}).
The last assertion of the theorem when
$\mu,\nu$ are arbitrary is then a consequence of the continuity of
$d/dz$ from $\underset{\lim}{\longleftarrow} A_1^{B_{\rho,n}}(\C)$ into itself.
The superoscillating character of the sequence $\{\psi_{P,N}(t,\cdot,a)\}_{N\geq 1}$ follows from Definition \ref{sect3-def2}.
\end{proof}

\begin{remark}
{\rm When $E\in \R[[X]]$, $1<|a|<\rho-1$ and $\sup_{[-1,1]}(E)\leq 1 < |E(a)|$, the generalized Fourier sequence
$$
\Big\{(t,x) \in \R^2 \longmapsto \psi_{E,N} (t,x,a) = \sum\limits_{j=0}^N C_j(N,a)\,
e^{i E(1-2j/N)t} e^{ix(1-2j/N)}\Big\}_{N\geq 1}
$$
is also superoscillating, this time according to Definition \ref{sect3-def4} (with
$P_\infty(T,X)=TX$, $\kappa(\infty) = E(a)$ and $k(\infty) = a$).}
\end{remark}

\section{Regularization of formal Fresnel-type integrals}\label{sect4}

In order to settle from the mathematical point of view the approach to non-absolutely convergent integrals
 on the half-line $\R^{+*}$ or the whole real line $\R$ through the so-called principle of {\it regularization} that we will invoke in the remaining sections \ref{sect5} and \ref{sect6} (with respect to supershift considerations related to Schr\"odinger equations with specific potentials), we need to explain what regularization of formal Fresnel-type integrals on $\R^{+*}$ or $\R$ means.
\vskip 2mm
\noindent
Suppose that $\EuFrak T$ is a set of parameters. Let
$G~: (\EuFrak t,Z)\in \EuFrak t \times \C  \longmapsto G(\EuFrak t,Z)$
be a function which is entire as a function of $Z$ for each $\EuFrak t \in \EuFrak T$ fixed. Let also $\phi$ be a non-vanishing real function on $\EuFrak T$ that will play the role of a {\it phase function}. Let finally $\chi$ be a real number such that $\chi >-1$. In order to give a meaning to the formal integral
\begin{equation}\label{sect4-eq1}
\int_0^\infty (x')^\chi\, e^{-i\phi(\EuFrak t) (x')^2}\, G(\EuFrak t,x')\, dx'\quad (\chi >-1)
\end{equation}
we distinguish the cases where $\phi(\EuFrak t)>0$ and $\phi(\EuFrak t)<0$. In the first case ($\phi(\EuFrak t)>0$), we rewrite this (for the moment formal)  expression \eqref{sect4-eq1} as
\begin{multline}\label{sect4-eq2}
\int_0^\infty (x')^\chi\, e^{-i\phi(\EuFrak t) (x')^2}\, G(\EuFrak t,x')\, dx' = e^{-i(\chi+1)\pi/4}\, \int_{\R^{+*}\, e^{i\pi/4}} Z^\chi\, e^{-\phi(\EuFrak t)\, Z^2}\,
G(\EuFrak t, e^{-i\pi/4} Z)\, dZ \\=
\int_{\R^{+*}\, e^{i\pi/4}} Z^\chi\, e^{-\phi(\EuFrak t)\, Z^2}
\, F_+(\EuFrak t,Z)\, dZ
\end{multline}
with $F_+(\EuFrak t,Z) := e^{-i(\chi+1)\pi/4} G(\EuFrak t,e^{-i\pi/4} Z)$ for any
$\EuFrak t \in \EuFrak T$ and $Z\in \C$. In the second case ($\phi(\EuFrak t)<0$), we rewrite it as
\begin{multline}\label{sect4-eq3}
\int_0^\infty (x')^\chi\, e^{-i\phi(\EuFrak t) (x')^2}\, G(\EuFrak t,x')\, dx' =
e^{i(\chi+1)\pi/4}\, \int_{\R^{+*}\, e^{-i\pi/4}} Z^\chi\, e^{\phi(\EuFrak t)\, Z^2}\, G(\EuFrak t, e^{i\pi/4} Z)\, dZ \\
= \int_{\R^{+*}\, e^{-i\pi/4}} Z^\chi
e^{\phi(t)\, Z^2}\, F_-(\EuFrak t,Z)\, dZ
\end{multline}
with $F_-(\EuFrak t,Z) := e^{i(\chi+1)\pi/4} G(\EuFrak t,e^{i\pi/4} Z)$ for any
$\EuFrak t \in \EuFrak T$ and $Z\in \C$.
The following elementary lemma will reveal to be essential.

\begin{lemma}\label{sect4-lem1}
Let $\EuFrak T,\phi,\chi$ as above and
$F~: \EuFrak T \times \C \longrightarrow \C$
be a function with is entire in the complex variable $Z$ and satisfies the growth estimates
\begin{equation}\label{sect4-eq4}
\forall\, \varepsilon >0,\
\sup\limits_{\EuFrak t\in \EuFrak T,Z\in \C}
\big(|F(\EuFrak t,Z)|\, \exp(-\varepsilon |Z|^{\check p})\big) <+\infty
\end{equation}
for some $\check p\in ]1,2]$, that is $F(t,\cdot)\in A_{\check p,0}(\C)$ uniformly in $\EuFrak t$. Then, for any $u=e^{i\theta}$ with
$\theta \in ]-\pi/4,\pi/4[$, the integral
\begin{equation}\label{sect4-eq5}
\int_{\R^{+*}\, u} Z^\chi\, e^{- |\phi(\EuFrak t)|\, Z^2}\, F(\EuFrak t,Z)\, dZ
\end{equation}
is absolutely convergent and remains independent of $u$~; it equals in particular its value for $u=1$.
\end{lemma}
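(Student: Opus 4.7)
The plan is a standard contour-deformation argument. I would set $h(\EuFrak t, Z) := Z^\chi \, e^{-|\phi(\EuFrak t)| Z^2}\, F(\EuFrak t, Z)$, where $Z^\chi$ is defined via the principal branch of the logarithm on $\C \setminus \R^-$, and observe that $h(\EuFrak t, \cdot)$ is holomorphic on the open right half-plane minus the origin (since $F(\EuFrak t, \cdot)$ is entire by hypothesis and the Gaussian factor is entire). The goal is then, for $u = e^{i\theta}$ with $\theta \in (-\pi/4, \pi/4)$, to show that $\int_{\R^{+*} u} h(\EuFrak t, Z)\, dZ$ is absolutely convergent and equals its value at $u = 1$.

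First I would check absolute convergence on any ray $Z = r e^{i\theta'}$ with $|\theta'| < \pi/4$: one has $|Z^\chi| = r^\chi$, $|e^{-|\phi(\EuFrak t)| Z^2}| = e^{-|\phi(\EuFrak t)| r^2 \cos 2\theta'}$ with $\cos 2\theta' > 0$, and by \eqref{sect4-eq4} we are free to pick $\varepsilon > 0$ as small as we wish in $|F(\EuFrak t, Z)| \leq C_\varepsilon\, e^{\varepsilon r^{\check p}}$. Choosing $\varepsilon < |\phi(\EuFrak t)| \cos 2\theta'$ (a constraint only when $\check p = 2$; otherwise any $\varepsilon > 0$ works) yields an integrable majorant $C_\varepsilon\, r^\chi \exp(-\delta r^2 + \varepsilon r^{\check p})$ at infinity for some $\delta > 0$, while $\chi > -1$ handles integrability at the origin.

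For independence in $u$, assume $\theta \in (0, \pi/4)$ (the case $\theta < 0$ is symmetric and $\theta = 0$ trivial). For $0 < \varepsilon_0 < R$, I would apply Cauchy's theorem to $h(\EuFrak t, \cdot)$ on the boundary of the sector $\{\rho e^{i\phi} : \varepsilon_0 \leq \rho \leq R,\ 0 \leq \phi \leq \theta\}$, in which $h(\EuFrak t, \cdot)$ is holomorphic. The boundary consists of the segment $[\varepsilon_0, R]$, the outer arc $\{R e^{i\phi}\}_{\phi \in [0, \theta]}$, the reverse segment $[R u, \varepsilon_0 u]$, and the inner arc $\{\varepsilon_0 e^{i\phi}\}_{\phi \in [\theta, 0]}$, so the total contour integral is zero.

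The main obstacle---and the only step using the full strength of the hypotheses---is the outer-arc estimate: parametrising $Z = R e^{i\phi}$ with $\phi \in [0, \theta]$, one has $\cos 2\phi \geq \cos 2\theta > 0$, and after incorporating $|dZ| = R\, d\phi$ the modulus of the integrand is dominated by $R^{\chi + 1} \exp\bigl(-|\phi(\EuFrak t)| \cos 2\theta \cdot R^2 + \varepsilon R^{\check p}\bigr)$. Here the assumption $\check p \leq 2$ interacts decisively with the freedom to shrink $\varepsilon$: if $\check p < 2$ any $\varepsilon > 0$ suffices, whereas if $\check p = 2$ we pick $\varepsilon < |\phi(\EuFrak t)| \cos 2\theta$, and in both cases the exponent tends to $-\infty$, killing the outer-arc contribution as $R \to +\infty$. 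The inner arc is routine: $|Z^\chi| = \varepsilon_0^\chi$, the other factors are uniformly bounded, and the arc-length is $O(\varepsilon_0)$, giving $O(\varepsilon_0^{\chi + 1}) \to 0$ since $\chi > -1$. Passing to the limits $\varepsilon_0 \to 0^+$ and $R \to +\infty$ in the Cauchy identity yields the equality of the integrals over $\R^{+*}$ and over $\R^{+*} u$, which proves the lemma.
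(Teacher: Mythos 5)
Your proof is correct and follows essentially the same route as the paper's (which simply invokes "the residue theorem applied on the oriented boundary of conic sectors with apex at the origin" together with $\mathrm{Re}((tu)^2)=t^2\cos 2\theta>0$); you have merely supplied the details the paper leaves implicit, in particular the truncation $\varepsilon_0\leq\rho\leq R$ needed to avoid the branch point of $Z^\chi$ at the origin, the vanishing of the outer arc using $\check p\leq 2$ and the freedom to shrink $\varepsilon$, and the vanishing of the inner arc using $\chi>-1$.
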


\begin{proof}
The absolute convergence follows from the estimates \eqref{sect4-eq4}, together with the fact that if $u=e^{i\theta}$, ${\rm Re}((t u)^2) =
t^2 \cos (2\theta)>0$ for $t>0$. The fact that the integrals do not depend of $u$ follows from residue theorem (applied on the oriented boundary of conic sectors with apex at the origin).
\end{proof}

\noindent
In view of this lemma, the regularization of an integral of the Fresnel-type such as \eqref{sect4-eq1} consists in the successive two operations~:
\begin{enumerate}
\item first transform the formal expression \eqref{sect4-eq1} into one of the representations
\eqref{sect4-eq2} or \eqref{sect4-eq3} according to
${\rm sign}(\phi(\EuFrak t))$~;
\item then invoke Lemma \ref{sect4-lem1} (provided the required hypothesis are satisfied) and consider then the regularization of \eqref{sect4-eq1} as $\int_0^\infty Z^\chi e^{-\phi(\EuFrak t) Z^2}
F_+(\EuFrak t,Z)\, dZ$ when $\phi(\EuFrak t)>0$
or $\int_0^\infty Z^\chi e^{\phi(\EuFrak t) Z^2}
F_-(\EuFrak t,Z)\, dZ$ when $\phi(\EuFrak t)<0$.
\end{enumerate}

\begin{remark}\label{sect4-rem1}
{\rm
In order to give a meaning (if possible of course) to the formal integral expression
\begin{equation}\label{sect4-eq6}
\int_\R |x'|^\chi\, e^{-i\phi(\EuFrak t) (x')^2}\, G(\EuFrak t,x')\, dx',
\end{equation}
one splits it as
$$
\int_0^\infty (x')^\chi \, e^{-i\phi(\EuFrak t) (x')^2}\, G(\EuFrak t,x')\, dx' +
\int_0^\infty (x')^\chi\, e^{-i\phi(\EuFrak t) (x')^2}\, G(\EuFrak t,-x')\, dx'
$$
and proceed as above for the two formal expressions involved into this formal decomposition.
}
\end{remark}

\noindent
It is immediate to compare this approach to regularization to the alternative following one.

\begin{proposition}\label{sect4-prop1} Let $G \in A_{2,0}(\C)$ and $\chi >-1$. Then, for all $\varpi \in \R^*$
$$
\lim\limits_{\varepsilon \rightarrow 0} \int_0^\infty
(x')^\chi\, e^{i\, \varpi (x')^2}\, e^{-\varepsilon (x')^2}\, G(x')\, dx'
$$
exists and coincides with the integral regularized under the approach described above.
\end{proposition}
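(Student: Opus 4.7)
The strategy is a contour rotation argument. Assume $\varpi>0$ (the case $\varpi<0$ is symmetric, via $e^{-i\pi/4}$ in place of $e^{i\pi/4}$). Set
\[
I_\varepsilon := \int_0^\infty (x')^\chi\, e^{(i\varpi - \varepsilon)(x')^2}\, G(x')\, dx',
\]
which converges absolutely for each $\varepsilon>0$ because $G\in A_{2,0}(\C)$ gives $|G(x')|\leq C_\eta\, e^{\eta (x')^2}$ for every $\eta>0$ (take $\eta<\varepsilon$).

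For fixed $\varepsilon>0$, I would apply Cauchy's theorem to the holomorphic function $z\mapsto z^\chi\, e^{(i\varpi-\varepsilon)z^2}\, G(z)$ (principal branch of $z^\chi$) in the sector $\{0\leq\arg z\leq \pi/4\}$, closing the contour with a small arc of radius $\delta$ at the origin, a large arc of radius $R$, and the oblique segment $[Re^{i\pi/4},\delta e^{i\pi/4}]$. The small-arc contribution is $O(\delta^{\chi+1})\to 0$ thanks to $\chi>-1$. For the large arc, the key identity is
\[
\mathrm{Re}\big((i\varpi-\varepsilon)(Re^{i\theta})^2\big) = -R^2(\varepsilon\cos 2\theta + \varpi \sin 2\theta),
\]
whose infimum over $\theta\in[0,\pi/4]$ equals $\min(\varepsilon,\varpi)>0$. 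Choosing $\eta<\min(\varepsilon,\varpi)$ in the growth estimate of $G$ gives an arc bound $O(R^{\chi+1}e^{-\delta' R^2})\to 0$ for some $\delta'>0$. Parametrizing the returning segment as $z=re^{i\pi/4}$ (so $z^2=ir^2$), the contour identity then reads
\[
I_\varepsilon = e^{i(\chi+1)\pi/4}\int_0^\infty r^\chi\, e^{-\varpi r^2}\, e^{-i\varepsilon r^2}\, G(re^{i\pi/4})\, dr.
\]

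Dominated convergence handles the limit $\varepsilon\to 0^+$: fix any $\eta$ with $0<\eta<\varpi$; the integrands are uniformly majorized for $\varepsilon\in[0,1]$ by $C_\eta\, r^\chi\, e^{-(\varpi-\eta)r^2}$, which is integrable on $\R^+$. Hence
\[
\lim_{\varepsilon\to 0^+} I_\varepsilon = e^{i(\chi+1)\pi/4}\int_0^\infty r^\chi\, e^{-\varpi r^2}\, G(re^{i\pi/4})\, dr.
\]
To identify this with the regularized value of \eqref{sect4-eq1}, observe that here $\phi=-\varpi<0$, so \eqref{sect4-eq3} takes $F_-(Z)=e^{i(\chi+1)\pi/4}\, G(e^{i\pi/4}Z)$ and recasts the formal integral as $\int_{\R^{+*}e^{-i\pi/4}}Z^\chi e^{\phi Z^2} F_-(Z)\, dZ$. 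Lemma \ref{sect4-lem1}, applicable with $\check p=2$ since $F_-\in A_{2,0}(\C)$, collapses the integration ray to $\R^{+*}$ itself, producing exactly $e^{i(\chi+1)\pi/4}\int_0^\infty Z^\chi e^{-\varpi Z^2} G(e^{i\pi/4}Z)\, dZ$, which matches the limit above.

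The main obstacle is the quantitative control of $G$ at infinity: the minimal-type hypothesis $G\in A_{2,0}(\C)$ (rather than merely $A_2(\C)$) is essential, because it allows the growth exponent of $G$ to be chosen strictly smaller than $\min(\varepsilon,\varpi)$, which is needed simultaneously for the absolute convergence of $I_\varepsilon$, for the vanishing of the large arc in the Cauchy deformation, and for the existence of an integrable majorant in the passage to the limit $\varepsilon\to 0^+$. If $G$ were only in $A_2(\C)$ with a fixed type exceeding $\varpi$, each of these three ingredients could fail.
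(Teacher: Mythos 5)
Your proof is correct. It reaches the same conclusion as the paper but by a somewhat more direct route, and it is worth noting where the two arguments diverge. The paper first performs the formal substitution $x' = e^{\mp i\pi/4}Z$ in the presence of the $e^{-\varepsilon(x')^2}$ regularizer, obtaining an integral of $Z^\chi e^{-(1\mp i\varepsilon)Z^2}F_\pm(Z)$ over the ray $e^{\pm i\pi/4}\R^{+*}$, then absorbs the factor $(1\mp i\varepsilon)$ by the rescaling $Z\mapsto e^{\pm i\xi_\varepsilon}Z/\sqrt{\rho_\varepsilon}$ (with $\rho_\varepsilon=\sqrt{1+\varepsilon^2}$ and $\xi_\varepsilon=\tfrac12\arctan\varepsilon$), which tilts the ray to $e^{\pm i(\pi/4-\xi_\varepsilon)}\R^{+*}$, a direction strictly inside $(-\pi/4,\pi/4)$. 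Only then does it invoke Lemma~\ref{sect4-lem1} to slide the contour to $\R^{+*}$ and finally apply dominated convergence as $\varepsilon\to 0$. You instead run Cauchy's theorem directly on the sector $\{0\le\arg z\le\pi/4\}$, using the explicit arc estimates (the infimum $\min(\varepsilon,\varpi)>0$, beaten by the $A_{2,0}$ growth with $\eta<\min(\varepsilon,\varpi)$) to land immediately on
\[
I_\varepsilon=e^{i(\chi+1)\pi/4}\int_0^\infty r^\chi\,e^{-\varpi r^2}\,e^{-i\varepsilon r^2}\,G(re^{i\pi/4})\,dr,
\]
where the surviving Gaussian $e^{-\varpi r^2}$ already gives a uniform integrable majorant, so dominated convergence applies with no further contour manipulation. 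What the paper's $\rho_\varepsilon/\xi_\varepsilon$ trick buys is that the integral is put in the exact normal form $\int Z^\chi e^{-Z^2}F(\cdot Z)\,dZ$ to which Lemma~\ref{sect4-lem1} applies verbatim; what your approach buys is a shorter proof in which Lemma~\ref{sect4-lem1} is only needed at the very end, to identify the resulting limit with the regularization of Section~4. You also state explicitly (as the paper does implicitly, via the regularization scheme) why the hypothesis $G\in A_{2,0}(\C)$, rather than $A_2(\C)$, is essential: the growth exponent of $G$ must be beatable by any Gaussian $e^{-\eta r^2}$ with $\eta$ arbitrarily small, which is exactly what makes the three estimates (absolute convergence of $I_\varepsilon$, vanishing of the large arc, and the integrable majorant at $\varepsilon\to 0$) go through simultaneously.
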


\begin{proof}
It is enough to prove the result when $\varpi = \pm 1$ since one reduces to one of these two cases up to a homothety on the real half line. One has
\begin{eqnarray*}
\int_0^\infty (x')^\chi\, e^{-\varepsilon (x')^2} e^{-i\, (x')^2}
\, G(x')\, dx' &=&
\int_{e^{i\pi/4} \R^{+*}} Z^\gamma\, e^{- (1-i\, \varepsilon)\, Z^2}\, F_+(Z)\, dZ
\\
\int_0^\infty (x')^\chi\, e^{-\varepsilon (x')^2} e^{i\, (x')^2}
\, G(x')\, dx' &=&
\int_{e^{-i\pi/4} \R^{+*}} Z^\chi\, e^{- (1 + i\, \varepsilon)\, Z^2}\, F_-(Z)\, dZ,
\end{eqnarray*}
where $F_+(Z) = e^{-i(1+\chi)\pi/4} F(e^{-i\pi/4} Z)$ and $F_-(Z) = e^{i(1+\chi)\pi/4} F(e^{i\pi/4} Z)$. Let $\rho_\varepsilon = \sqrt{1+\varepsilon^2}$, and
$\xi_{\varepsilon} = {\rm arg}_{[0,\pi/2[} \sqrt{1+ i\varepsilon}$.
One has then
\begin{equation}\label{sect4-eq7}
\begin{split}
& \int_0^\infty (x')^\gamma\, e^{-\varepsilon (x')^2} e^{-i\, (x')^2}
\, G(x')\, dx' = \Big(\frac{e^{i\xi_\varepsilon}}{\sqrt{\rho_\varepsilon}}\Big)^{1+\chi}\,
\int_{e^{i(\pi/4-\xi_\varepsilon)}\, \R^{+*}}
Z^\chi\, e^{-Z^2} F^+(e^{i\xi_\varepsilon}\, Z/\sqrt{\rho_\varepsilon})\, dZ
\\
&
\int_0^\infty (x')^\chi\, e^{-\varepsilon (x')^2} e^{i\, (x')^2}\, G(x') \, dx'=
\Big(\frac{e^{- i\xi_\varepsilon}}{\sqrt{\rho_\varepsilon}}\Big)^{1+\chi}\,
\int_{e^{-i(\pi/4 -\xi_\varepsilon)}\, \R^{+*}}
Z^\chi\, e^{-Z^2}\, F_-(e^{-i\xi_\varepsilon}\, Z/\sqrt{\rho_\varepsilon})\, dZ.
\end{split}
\end{equation}
In the two integrals on the right-hand side of the equalities \eqref{sect4-eq7}, the integration contour can be replaced by
the half-line $\R^{+*}$ as a consequence of Lemma \ref{sect4-lem1}.
It is then possible to take the limit when $\varepsilon$ tends to
$0$. Lebesgue's domination theorem then applies and since
$\rho_\varepsilon$ tends to $1$ and $\xi_\varepsilon$ to
$0$, one gets
\begin{eqnarray*}
\lim\limits_{\varepsilon \rightarrow 0} \int_0^\infty (x')^\chi\, e^{-\varepsilon (x')^2} e^{-i\, (x')^2}
\, G(y)\, dy &=&
\int_0^\infty Z^\chi\, e^{- Z^2}\, F_+(Z)\, dZ
\\
\lim\limits_{\varepsilon \rightarrow 0}
\int_0^\infty (x')^\chi\, e^{-\varepsilon (x')^2} e^{i\, (x')^2}
\, G(x')\, dx' &=&
\int_0^\infty Z^\chi\, e^{- Z^2}\, F_-(Z)\, dZ.
\end{eqnarray*}
This concludes the proof of the Proposition.
\end{proof}

\section{Fresnel-type integral operators}\label{sect5}

\subsection{Continuity on $A_1(\C)$ of Fresnel-type integral operators}\label{sect5-1}

Let $\EuFrak T$ be a set of parameters and $\EuFrak t \in \EuFrak T\mapsto \D(\EuFrak t,Z)$ (as in the statement of Lemma \ref{sect2-lem3}) be a differential operator-valued map
$$
\EuFrak t \in \EuFrak T \longmapsto \D(\EuFrak t,Z) = \sum\limits_{j=0}^\infty b_j(\EuFrak t,Z) \Big(\frac{d}{dZ}\Big)^j
$$
(with $b_j~: T\times \C \rightarrow \C$, holomorphic in $Z$ for $j\in \N$) such that
\begin{equation}\label{sect5-eq1}
\forall\, \varepsilon >0,\
\sup\limits_{\EuFrak t \in \EuFrak T, (Z,W)\in \C^2}
\Big(\Big(\sum\limits_{j=0}^\infty |b_j(\EuFrak t,Z)|\ |W|^j\Big)\, \exp (- \varepsilon\, |Z|^{\check p} - B\, |W|^{p})\Big) = A^{(\varepsilon)} < +\infty
\end{equation}
for some $\check p\in ]1,2], p \geq 1$ and $B\geq 0$.
Let also $\phi$ be a non-vanishing real function on $\EuFrak T$ and $\chi>-1$. It follows from the estimates \eqref{sect5-eq1}, together with Lemma \ref{sect4-lem1}, that the regularization approach described in section \ref{sect4} allows to define the operator
\begin{equation}\label{sect5-eq2}
\EuFrak t \longmapsto \int_{0}^\infty Z^\chi\, e^{-i \phi(\EuFrak t)\, Z^2}
\sum\limits_{j=0}^\infty b_j(\EuFrak t,Z) \Big(\frac{d}{dZ}\Big)^j (\cdot )\, dZ.
\end{equation}
One needs to consider for the moment these operators as acting on entire functions of the complex variable $Z$. For $\alpha \in \C$, let also $H_\alpha$ be the dilation operator $H_\alpha~: f \mapsto f(\alpha (\cdot))$ acting on such functions. The symbol $\odot$ still stands for the composition of operators. The discussion is with respect to the sign of $\phi(\EuFrak t)$.
\begin{itemize}
\item
When $\phi(\EuFrak t)>0$,
\begin{multline}\label{sect5-eq3}
\int_{0}^\infty Z^\chi\, e^{-i \phi(\EuFrak t)\, Z^2} \Big(
\sum\limits_{j=0}^\infty b_j(\EuFrak t,Z) \Big(\frac{d}{dZ}\Big)^j (\cdot )\Big)\, dZ \\
= e^{-i(1+\chi) \pi/4}
\int_0^\infty
y^\chi\, e^{-\phi(\EuFrak t)\, y^2} \, \Big(
\sum\limits_{j=0}^\infty b_j(\EuFrak t, e^{-i\pi/4} Z)\,
\Big( e^{ij \pi/4} \, \Big(\frac{d}{dZ}\Big)^j \odot H_{e^{-i\pi/4}}\Big)\, (\cdot)\Big)(y)\, dy.
\end{multline}
\item
When $\phi(\EuFrak t)<0$,
\begin{multline}\label{sect5-eq4}
\int_{0}^\infty Z^\chi\, e^{-i \phi(\EuFrak t)\, Z^2}
\Big(\sum\limits_{j=0}^\infty b_j(\EuFrak t,Z) \Big(\frac{d}{dZ}\Big)^j (\cdot )\Big)\, dZ \\
= e^{i(1+\chi)\pi/4}
\int_0^\infty y^\chi\,
e^{\phi(\EuFrak t)\, y^2}
\, \Big(\sum\limits_{j=0}^\infty b_j(\EuFrak t, e^{i\pi/4}\, Z)\,
\Big( e^{-ij \pi/4} \, \Big(\frac{d}{dZ}\Big)^j \odot  H_{e^{i\pi/4}}\Big)\, (\cdot)\Big)(y)\, dy.
\end{multline}
\end{itemize}

\begin{theorem}\label{sect5-thm1}
Suppose that the parameter space $\EuFrak T$ is a topological space and that $\phi$ is continuous. Consider functions
$B_j~: \EuFrak T \times \C \times \C\rightarrow \C$ ($j\in \N$) which are entire in the two complex entries and such that
\begin{multline}\label{sect5-eq5}
\forall\, \varepsilon >0,\ \exists\, A^{(\varepsilon)}, B^{(\varepsilon)} \geq 0\ {\it such\ that}\ \forall\, \EuFrak t\in \EuFrak T,\
\forall Z\in \C,\forall\, \check Z\in \C,\ \forall\, W\in \C,\\
\sum\limits_{j=0}^\infty |B_j(\EuFrak t,Z,\check Z)|\ |W|^j \leq
A^{(\varepsilon)} \, e^{\varepsilon\, |Z|^{\check p} + B^{(\varepsilon)} |\check Z|^{\check p} + B\, |W|^{p}}
\end{multline}
for some $p\geq 1$, $\check p\in ]1,2]$, and $B\geq 0$.
Then the operator
$$
\int_{0}^\infty Z^\chi\, e^{-i \phi(\EuFrak t)\, Z^2} \Big(
\sum\limits_{j=0}^\infty B_j(\EuFrak t,Z,\check Z) \Big(\frac{d}{dZ}\Big)^j (\cdot )\Big)\, dZ
$$
(understood through the process of regularization as described above) acts continuously locally uniformly in $\EuFrak t$ from $A_1(\C)$ into $A_{\check p}(\C)$.
\end{theorem}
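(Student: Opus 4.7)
\medskip
\noindent\textbf{Proof plan.}
The plan is to graft the regularization procedure of Section \ref{sect4} onto the mechanism already developed in Lemma \ref{sect2-lem3}, with the extra output variable $\check Z$ playing a role analogous to the parameter $Z$ there. First, I would localize on a compact $K\subset \EuFrak T$. By continuity and non-vanishing of $\phi$, the sign of $\phi$ is constant on each connected component of $K$, and there is a uniform lower bound $|\phi(\EuFrak t)|\geq c_K>0$. On such a component, apply formula \eqref{sect5-eq3} or \eqref{sect5-eq4} according to the sign of $\phi$: after the contour rotation by $u=e^{\mp i\pi/4}$, the operator assumes the form
\begin{equation*}
(T_{\EuFrak t}f)(\check Z)
= u^{\chi+1}\int_0^{\infty} y^\chi\, e^{-|\phi(\EuFrak t)|\, y^2}\,
\sum_{j=0}^\infty B_j(\EuFrak t, uy,\check Z)\, u^{-j}\, f^{(j)}(uy)\, dy .
\end{equation*}
The validity of this rewriting, as well as the independence of the chosen contour, will be justified by Lemma \ref{sect4-lem1}, provided the next estimates go through.

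\medskip
For $f\in A_1^{\gamma,\beta}(\C)$, Lemma \ref{sect2-lem1} yields the pointwise bound $|f^{(j)}(uy)|\leq \gamma\,\beta^j\,e^{\beta y}$. Fixing some small $\varepsilon>0$ (to be chosen), the hypothesis \eqref{sect5-eq5} evaluated at $|W|=\beta$ gives
\begin{equation*}
\sum_{j\geq 0} |B_j(\EuFrak t,uy,\check Z)|\,\beta^j
\;\leq\; A^{(\varepsilon)}\,\exp\bigl(\varepsilon\, y^{\check p} + B^{(\varepsilon)}|\check Z|^{\check p}+B\beta^p\bigr),
\end{equation*}
so that, combining the two,
\begin{equation*}
\sum_{j\geq 0} |B_j(\EuFrak t,uy,\check Z)|\,|f^{(j)}(uy)|
\;\leq\; C_{f,\varepsilon}\,\exp\bigl(\beta\, y + \varepsilon\, y^{\check p}+B^{(\varepsilon)}|\check Z|^{\check p}\bigr).
\end{equation*}
Since $\chi>-1$ controls integrability near $0$, and since $\check p\leq 2$ with $|\phi(\EuFrak t)|\geq c_K$, one can pick $\varepsilon$ small enough (e.g.\ $\varepsilon<c_K/2$) so that the Gaussian $e^{-|\phi(\EuFrak t)|y^2}$ dominates $e^{\beta y+\varepsilon y^{\check p}}$ on $[R_K,+\infty)$. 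Fubini then gives absolute convergence of the double sum-integral and the bound
\begin{equation*}
|(T_{\EuFrak t}f)(\check Z)|\;\leq\; C(K,\gamma,\beta)\,e^{B^{(\varepsilon)}|\check Z|^{\check p}},
\qquad \EuFrak t\in K,\ \check Z\in\C,
\end{equation*}
which is precisely membership in $A_{\check p}(\C)$ with a norm estimate uniform on $K$.

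\medskip
Entirety of $\check Z\mapsto (T_\EuFrak t f)(\check Z)$ will follow, by a Morera argument, from the entirety of the integrand in $\check Z$ combined with the uniform-in-compacta bounds just obtained. For the continuity part, I would take a sequence $\{f_N\}$ converging to $0$ in $A_1(\C)$: by Proposition \ref{sect2-prop1}, the $f_N$ lie in a common $A_1^{\gamma,\beta}(\C)$ while converging to $0$ in $H(\C)$. Applying the same estimates to $f_N$, splitting the $y$-integral at some large $R$, controlling the tail by the uniform Gaussian bound and the piece on $[0,R]$ by Cauchy-type estimates of $f_N^{(j)}$ (which tend to $0$ uniformly on $\{|uy|\leq R\}$ and can be summed against the absolutely convergent series), one deduces that $\|T_\EuFrak t f_N\|_{B^{(\varepsilon)}}\to 0$ uniformly for $\EuFrak t\in K$. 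This is the desired continuity of $T_\EuFrak t$, locally uniformly in $\EuFrak t$.

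\medskip
\noindent\textbf{Main obstacle.} The delicate point is the simultaneous calibration of $\varepsilon$: small enough to let the Gaussian absorb the $y^{\check p}$-term (this is where $\check p\leq 2$ is crucial), but not so small that the resulting constant $B^{(\varepsilon)}$ controlling $\check Z$-growth becomes unusable. The fact that the target is $A_{\check p}(\C)$ rather than $A_{\check p,0}(\C)$ is what saves the day: we only need a single admissible $\varepsilon$, not an arbitrarily small one, so a sharp trade-off exists.
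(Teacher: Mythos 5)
Your proposal is correct and follows essentially the same route as the paper's proof: localize so the sign of $\phi$ is fixed with $|\phi|\geq c_K>0$, rotate the contour as in Section \ref{sect4}, bound the kernel using the Taylor-coefficient estimate of Lemma \ref{sect2-lem1} together with the growth hypothesis \eqref{sect5-eq5}, exploit the Gaussian dominance with $\varepsilon<c_K$ (this is where $\check p\le 2$ enters), and conclude continuity via Proposition \ref{sect2-prop1} by splitting the $y$-integral. The only difference is cosmetic: the paper packages the kernel estimate by invoking Lemma \ref{sect2-lem3}, whereas you unroll the same computation by hand.
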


\begin{proof}
It is enough to consider $\EuFrak T$ as a neighborhood of a point
$\EuFrak t_0$ in which $\phi(\EuFrak t)\geq \varepsilon_0>0$ (since $\phi$ is continuous).
Let $\EuFrak f = \{Z\mapsto f_N(Z)\}_{N\geq 1}$ be a sequence of elements
in $A_1(\C)$ that converges towards $0$ in $A_1(\C)$, which means (see Proposition \ref{sect2-prop1})
that all $f_N$ belong to some $A_1^{C,b}(\C)$ for some constants $C,b>0$ independent on $N$ (namely $f_N=\sum_\ell a_{N,\ell} Z^\ell$ with $|a_{N,\ell}|\leq C\, b^\ell/\ell!$ for any $\ell\in \N$).
It is clear that the operator
$$
\sum\limits_{j=0}^\infty B_j(\EuFrak t, e^{-i\pi/4} Z,\check Z)\,
\Big( e^{ij \pi/4} \, \Big(\frac{d}{dZ}\Big)^j \odot H_{e^{-i\pi/4}}\Big)
$$
involved in the integrand of \eqref{sect5-eq3}
is governed by estimates of the form \eqref{sect5-eq5}. It follows then from Lemma \ref{sect2-lem3}, taking into account estimates \eqref{sect5-eq5}, that for each $N\in \N^*$ the function
\begin{multline*}
H(f_N)~: (\EuFrak t, Z,\check Z) \in \EuFrak T \times \C \times \C  \\ \longmapsto \sum\limits_{j=0}^\infty B_j(\EuFrak t, e^{-i\pi/4} Z,\check Z)\,
\Big( e^{ij \pi/4} \, \Big(\frac{d}{dZ}\Big)^j \odot H_{e^{-i\pi/4}}\Big)(f_N)(Z)
\end{multline*}
is such that for each $\varepsilon>0$, there exists $\widetilde A^{(\varepsilon)}\geq 0$ (depending on $\EuFrak T$, $A^{(\varepsilon)}$, the $B_j$, $b$ and $C$, but not on the $N$) such that
$$
\forall\, (\EuFrak t,Z,\check Z)\in \EuFrak T \times \C \times \C,\quad
|H(f_N)(\EuFrak t,Z,\check Z)|\leq
\widetilde A^{(\varepsilon)}\, e^{\epsilon\, |Z|^{\check p} + B^{(\varepsilon)} |\check Z|^{\check p}}.
$$
Take in particular $\varepsilon < \varepsilon_0$. Then the function
$$
\check Z \in \C \longmapsto
\int_0^\infty y^\chi\,
e^{-\phi(t) y^2}
H(f_N) (\EuFrak t,y,\check Z)\, dy
$$
is in $A_{\check p,0}(\C)$ since it is estimated as
$$
\Big|
\int_0^\infty y^\chi\,
e^{-\phi(t) y^2}
H(f_N) (\EuFrak t,y,\check Z)\, dy\Big|
\leq
\widetilde A_{(\varepsilon)}\, \Big(\int_0^\infty
y^\chi\, e^{-\varepsilon_0 y^2} e^{\varepsilon y^{\check p}}\, dy\Big)\, e^{B^{(\varepsilon)} |\check Z|^{\check p}}\quad \forall\, \check Z \in \C
$$
(remember that $\check p \in ]1,2]$). It remains to show that the sequence
$$
\Big\{\check Z \longmapsto \int_0^\infty y^\chi\, e^{-\phi(t) y^2}
H(f_N) (\EuFrak t,y,\check Z)\, dy\Big\}_{N\geq 1}
$$
converges to $0$ in $A_{\check p,0}(\C)$. It is enough (see Proposition \ref{sect2-prop1}) to prove that it converges to $0$ uniformly on any closed disk
$\overline{D(0,r)}$ in $\C$.
Fix $\varepsilon < \varepsilon_0$ and $\eta>0$. Choose then
$R_\eta >>1 $ such that
\begin{multline*}
\forall\, N\in \N,\quad
\Big|\int_{R_\eta}^\infty
 y^\chi\, e^{-\phi(t) y^2}
H(f_N) (\EuFrak t,y,\check Z)\, dy\Big| \\
\leq \widetilde A^{(\varepsilon)}\, \Big(\int_0^\infty y^\chi\,
e^{-\varepsilon_0 y^2} e^{\varepsilon y^{\check p}}\, dy\Big)\,
e^{B^{(\varepsilon)}\, |\check Z|^{\check p}} \leq \eta\, e^{-
B^{(\varepsilon)} r^{\check p}}\,
e^{B^{(\varepsilon)} |\check Z|^{\check p}} \leq \eta.
\end{multline*}
On $[0,R_\eta]$, one uses the uniform convergence of $\EuFrak f$ towards
$0$ on any compact set, hence of $H[\EuFrak f]$ on any compact set, to conclude that for $N\geq N_\eta >>1$, one has
$$
\Big|\int_{0}^{R_\eta}
 y^\chi\, e^{-\phi(t) y^2}
H(f_N) (\EuFrak t,y,\check Z)\, dy\Big|
\leq \eta \quad \forall\, \check Z\in \overline{D(0,r)}.
$$
Note that our estimates show that the convergence towards $0$ in $A_{\check p,0}(\C)$ thus obtained is uniform in $\EuFrak t\in \EuFrak T$.
\end{proof}

\subsection{Superoscillations and supershifts}

Consider the Schr\"odinger equation
\begin{equation}\label{sect5-eq6}
i\, \frac{\partial \psi}{\partial t} (t,x) = \big(\mathscr H(x) (\psi)\big)(t,x)
\end{equation}
where $\mathscr H$ denotes the Hamiltonian operator attached to the physical system which is under consideration. Suppose that
$\boldsymbol Y = \{x\mapsto Y_N(x)\}_{N\geq 1}$ is a superoscillating sequence.
Since
$$
\Big(i\,
\frac{\partial}{\partial t} - \mathscr H(x)\Big)(\psi)(t,x) =0,\quad
\big[\psi(t,x)]_{t=0} = Y(x)
$$
is a Cauchy-Kowalevski problem (assuming that $x$ lies in some open set
$U\subset \R$ where the Hamiltonian operator is regular),
any entry $x\in U\mapsto Y_N(x)$ evolves in a unique way from $t=0$ towards $t>0$ as $(t,x)\mapsto \psi_N(t,x)$. Assume in addition that $x$ lies in the maximal superoscillation domain $U_{\rm max}^{\rm suposc}$~; the limit function $x\in U \cap U_{\rm max}^{\rm sosc} \mapsto
Y_\infty(x)$ then also evolves from $U\cap U_{\rm max}^{\rm sosc}$ into some function $(t,x) \mapsto \psi_\infty(t,x)$.
\vskip 2mm
\noindent
A natural question then occurs. As long as the evolution persists (let say for $t\in [0,T]$), is it true that the sequence
$\{x\in U \mapsto \psi_N(t,x)\}_{N\geq 1}$ is such that its restriction to $U\cap U_{\rm max}^{\rm sosc}$ converges (uniformly on any compact subset of $U\cap U_{\rm max}^{\rm sosc}$) to $x\mapsto \psi_\infty(t,x)$? If this is the case, one will say that the superoscillating character of the sequence $\boldsymbol Y$ persists in time through the Schr\"odinger evolution operator $\partial/\partial t - \mathscr H$ which is here considered.
\vskip 2mm
\noindent
In order to formulate such question in a different way, let us now consider the $(t,x)$ domain
$[0,T] \times (U\cap U_{\rm max}^{\rm sosc}) =
\EuFrak T \times (U\cap U_{\rm max}^{\rm sosc}) = \mathscr T$ as a parameter set and focus on the map $\lambda \in \R \longmapsto \varphi_\lambda$, where $\varphi_\lambda~: \mathscr T \rightarrow \R$ is evolved to $[0,T] \times U$ (through the Schr\"odinger operator) from the initial datum $x\in U \mapsto e^{i\lambda x}$, then restricted to the parameter set $\mathscr T$. Previous considerations lead to the following definition, which is inspired by Definition \ref{sect3-def2}.

\begin{definition}\label{sect5-def1} Let $\mathscr T$ be a locally compact topological space and $\mathscr F = \{\varphi_\lambda\,;\, \lambda \in \R\}$ be a family of $\C$-valued functions on $\mathscr T$ indexed by $\R$. A sequence $\boldsymbol \psi = \{\tau \in \mathscr T \mapsto \psi_N(\tau)\}_{N\geq 1}$ of
$\C$-valued functions on $\mathscr T$ is called a {\rm supershift for the family $\mathscr F$} (or {\rm $\mathscr F$ admits $\boldsymbol \psi$ as a supershift}) if
\begin{itemize}
\item
any entry $\psi_N$ is of the form
$\psi_N = \sum_{j=0}^N C_j(N) \, \varphi_{k_j(N)}$
with $|k_j(N)|\leq 1$ for any $N\in \N^*$ and $0\leq j\leq N$;
\item there exists an open subset $\mathscr U^{\rm ssh}$ of $\mathscr T$
called a {\rm $\mathscr F$-supershift domain} such that the sequence
$\{\tau \in \mathscr U^{\rm ssh} \mapsto \psi_N(\tau)\}$ converges
locally uniformly towards the restriction to $\mathscr U^{\rm ssh}$
of a function $\psi_\infty$ which is a $\C$-finite linear combination of
elements in $\mathscr F$ of the form $\varphi_{\nu k(\infty)}$ with $\nu\in \Z^*$, where $k(\infty) \in \R \setminus [-1,1]$.
\end{itemize}
\end{definition}

\vskip 2mm
\noindent 
\begin{example}\label{sect5-expl1}{\rm
\vskip 1mm
\noindent
\begin{enumerate}
\item If $\mathscr T =\R$ and $\mathscr F$ denotes the family of
characters $x\in \R \mapsto e^{i\lambda x}$ indexed by the dual copy $\R_\lambda^\star$ of $\R_x$,
$\mathscr F$-supershifts are the complex superoscillating sequences (see Definition \ref{sect3-def2}).
\item Let $a\in \R\setminus [-1,1]$, $\mathscr T=\R^2_{t,x}$ and
$\mathscr F = \{\varphi_\lambda\,;\, \lambda \in \R^\star\}$ as defined
in Theorem \ref{sect3-thm2} or Theorem \ref{sect3-thm3}. For any
$a\in \R\setminus [-1,1]$, the sequence $\{(t,x) \mapsto \psi_{P,N}(t,x,a)\}_{N\geq 1}$ is a $\mathscr F$-supershift which admits $\R^2_{t,x} = \mathscr T$ as $\mathscr F$-supershift domain.
\end{enumerate}
}
\end{example}
\vskip 2mm
\noindent
When $\mathscr T$ is of the form $[0,T[\times U$, where $U$ is an open subset in $\R^{m-1}_x$ ($m\geq 2$) and $T\in ]0,+\infty]$, one can consider as well families $\mathscr F = \{\varphi_\lambda\,;\, \lambda \in \R\}$ of {\it $\C$-valued distributions in $\R \times U$ with support in $[0,T[\times U$}.
In order to define in this new context the notion of $\mathscr F$-supershift, one needs to keep the first clause in Definition \ref{sect5-def1} as it is and modify the second clause as follows~: ``{\it there exists an open subset $\mathscr U^{\rm ssh}= \mathscr V^{\rm ssh} \cap \mathscr T$ (where $\mathscr V$ is an open subset in $\R \times U$), called a {\rm $\mathscr F$-supershift domain},  such that the sequence
$\{(\psi_N(\tau))_{|\mathscr V^{\rm ssh}}\}_{N\geq 1}$ converges weakly
{\it in the sense of distributions} in $\mathscr V$ to the restriction to $\mathscr V$ of a {\it distribution}
$\psi_\infty\in \mathscr D'(\R \times U,\C)$ which is a $\C$-finite linear combination of
elements in $\mathscr F$ of the form $\varphi_{\nu k(\infty)}$ with $\nu\in \Z^*$, where $k(\infty) \in \R \setminus [-1,1]$}''.
\vskip 2mm
\noindent
One will need in section \ref{sect6} a further extension of this concept of $\mathscr F$-supershift to the case where $\mathscr T = [0,T[ \times U$, $U\subset \R^{m-1}$ with $m\geq 2$ as above, but elements $\varphi_\lambda\in \mathscr F$ are now
{\it hyperfunctions in $\R \times U$ with support in $\mathscr T$}.
The sequence $\{(\psi_N(\tau))_{|\mathscr V^{\rm ssh}}\}_{N\geq 1}$ needs in this case to converge still in the weak sense, but this time {\it in the sense of hyperfunctions} in $\mathscr V$, towards the restriction to $\mathscr V$ of the {\it hyperfunction} $\psi_\infty$. The notion of $\mathscr F$-supershift can thus be enlarged to families $\mathscr F =\{\varphi_\lambda\,;\, \lambda \in \R\}$ of {\it hyperfunctions} in $\R\times U$ with support in $\mathscr T$.

\subsection{The Schr\"odinger Cauchy problem with centrifugal potential}\label{sect5-2}

We will consider in this subsection the case where $U=\{x\in \R\,;\, x>0\}$ and the hamiltonian in \eqref{sect5-eq6} is $x\in U \mapsto \mathscr H(x) = - (\partial^2/\partial x^2)/2 + u/(2x^2)$, where $u$ denotes a real strictly positive physical constant. The corresponding Cauchy-Kowalevski problem (with $[0,+\infty[ \times U$ as phase space) is the {\it Schr\"odinger Cauchy problem with centrifugal potential}, see
\cite{centrifugal} for more references. For this Cauchy-Kowalevski problem, the analysis of the evolution $t\mapsto \psi(t,\cdot)$ of the solution $(t,x) \in [0,\infty[ \times U \mapsto \psi(t,x)$ from
an initial datum $x\in U\mapsto \psi(0,x)$ can be carried through thanks to the explicit form of the Green function $(t,x,x') \mapsto G(t,x,t'=0,x')$.
\vskip 2mm
\noindent
Let $\nu = \sqrt{1+4u}/2$ and the Bessel function $J_\nu$ defined in $\Omega := \C\setminus ]-\infty,0]$ as
\begin{multline}\label{sect5-eq7}
J_\nu~: z \in \Omega \longmapsto
\Big(\frac{z}{2}\Big)^\nu \sum\limits_{k=0}^\infty
\frac{(-1)^k}{\Gamma (k+1)\, \Gamma(\nu+k+1)}\, \Big(\frac{z}{2}\Big)^{2k} \\
= \Big(\frac{|z|}{2}\Big)^\nu
e^{i\, \nu\, {\arg}_{]-\pi,\pi[}(z)}\,
\sum\limits_{k=0}^\infty
\frac{(-1)^k}{\Gamma (k+1)\, \Gamma(\nu+k+1)}\, \Big(\frac{z}{2}\Big)^{2k} \\
= \Big(\frac{|z|}{2}\Big)^\nu
e^{i\, \nu\, {\arg}_{]-\pi,\pi[}(z)}\, E_\nu(z).
\end{multline}
Then the Green function $(t,x,x')\mapsto G(t,x,0,x')$ can be explicited in this case as
\begin{equation}\label{sect5-eq8}
G(t,x,0,x') = (-i)^{\nu+1}\,
\frac{\sqrt {xx'}}{t}\, \exp \Big(i \frac{x^2 + (x')^2}{2t}\Big)\, J_\nu \Big(\frac{xx'}{t}\Big)\quad (t>0,x,x'\in U)
\end{equation}
(see \cite{kempf1,SchulmanBOOK,Green}).

\begin{proposition}\label{sect5-prop1}
Let $\mathscr T = ]0,+\infty[\times ]0,+\infty[$, $\mathscr H:
x\in ]0,+\infty[ \mapsto -(\partial^2/\partial x^2 - u/x^2)/2$ for some physical constant $u>0$.
For any $\lambda \in \R$, the initial datum
$x\in ]0,+\infty[ \mapsto e^{i\lambda x}$ evolves through the
Cauchy-Kovalewski Schr\"odinger equation \eqref{sect5-eq6}
to a function $(t,x) \mapsto \varphi_\lambda (t,x)$ which is
$C^\infty$ in $\mathscr T$. For any $a\in \R \setminus [-1,1]$,
the family $\{\varphi_\lambda\,;\, \lambda \in \R\}$ admits as a
$\mathscr F$-supershift (in the sense of Definition \ref{sect5-def1}) the sequence
$$
\{(t,x)\in \mathscr T \mapsto \psi_N(t,x,a)\}_{N\geq 1} =
\Big\{\sum\limits_{j=0}^N C_j(N,a)\, \varphi_{1-2j/N}\Big\}
$$
with $\mathscr F$-supershift domain equal to $\mathscr T$.
Moreover, for any $(\mu,\nu)\in \N^2$, the sequence of functions
$$
\frac{\partial^{\mu+\nu}}{\partial t^\mu
\partial x^\nu} (\psi_N(t,x,a)) = \frac{1}{(2i)^{\mu}}
\Big(\Big(-\frac{\partial^2}{\partial x^2} + \frac{u}{x^2}\Big)^{\odot^\mu}
\odot \frac{\partial^{\nu}}{\partial x^\nu}\Big) (\psi_N(t,x,a))
$$
converges uniformly on any compact $K\subset\subset \mathscr T$ to the function
\begin{equation*}
(t,x) \in \mathscr T \longmapsto \frac{1}{(2i)^{\mu}}
\Big(\Big(-\frac{\partial^2}{\partial x^2} + \frac{u}{x^2}\Big)^{\odot^\mu}
\odot \frac{\partial^{\nu}}{\partial x^\nu}\Big)(\varphi_a(t,x)).
\end{equation*}
\end{proposition}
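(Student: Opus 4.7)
The plan is to realize $\varphi_\lambda(t,x)$ as the image of $x'\mapsto e^{i\lambda x'}$ under a regularized Fresnel-type operator of the kind handled by Theorem \ref{sect5-thm1}, and then to transfer the $A_1(\C)$-convergence $F_N(\cdot,a)\to e^{ia(\cdot)}$ from Theorem \ref{sect2-thm1} through this operator. Inserting the explicit Green function \eqref{sect5-eq8} into $\varphi_\lambda(t,x)=\int_0^\infty G(t,x,0,x')\,e^{i\lambda x'}\,dx'$ and factoring out the $x$-dependent terms yields, on $\mathscr T=\,]0,+\infty[\,\times\,]0,+\infty[$,
\[
\varphi_\lambda(t,x) \;=\; K(t,x)\,\int_0^\infty (x')^{\nu+1/2}\,e^{i(x')^2/(2t)}\,E_\nu\!\Big(\tfrac{xx'}{t}\Big)\,e^{i\lambda x'}\,dx',
\]
where $K(t,x)=(-i)^{\nu+1}(2t)^{-\nu}\,t^{-1}\,x^{\nu+1/2}\,e^{ix^2/(2t)}$ is smooth and non-vanishing on $\mathscr T$, and $E_\nu$ is the entire function of order one appearing in \eqref{sect5-eq7}.

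I then recognize this integral as a regularized Fresnel integral in the sense of Section \ref{sect4}, with $\EuFrak t=t$, $Z=x'$, $\check Z=x$, phase $\phi(t)=-1/(2t)<0$, weight $\chi=\nu+1/2>-1$, and amplitude $B_0(\EuFrak t,Z,\check Z)=E_\nu(\check Z Z/\EuFrak t)$ (all other $B_j\equiv 0$). Since $E_\nu$ is entire of order one with $|E_\nu(z)|\leq C_\nu\,e^{|z|}$, Young's inequality $|\check Z||Z|/t\leq \varepsilon|Z|^2+|\check Z|^2/(4\varepsilon t^2)$ produces, locally uniformly for $t$ in compact subsets of $]0,+\infty[$, a bound of the form $A^{(\varepsilon)}\exp(\varepsilon|Z|^2+B^{(\varepsilon)}_t|\check Z|^2)$, which is exactly the hypothesis \eqref{sect5-eq5} with $\check p=2$, $p=1$, $B=0$. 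Theorem \ref{sect5-thm1} (the case $\phi<0$ being handled by rotating the integration contour to $e^{-i\pi/4}\,\R^{+*}$ as in Section \ref{sect4}) then yields that
\[
f\in A_1(\C)\;\longmapsto\;T(t;f)(x):=\int_0^\infty (x')^{\nu+1/2}\,e^{i(x')^2/(2t)}\,E_\nu\!\Big(\tfrac{xx'}{t}\Big)\,f(x')\,dx'
\]
is a continuous operator from $A_1(\C)$ into $A_{2,0}(\C_x)$, locally uniformly in $t$. The $C^\infty$-regularity of $\varphi_\lambda$ on $\mathscr T$ and the fact that it solves $i\partial_t\psi=\mathscr H(x)\psi$ follow from differentiating under the integral sign, since the Fresnel-type class of Section \ref{sect4} is stable under the action of $\partial_t$ and $\partial_x$.

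By linearity, $\psi_N(t,x,a)=K(t,x)\,T(t;F_N(\cdot,a))(x)$ and $\varphi_a(t,x)=K(t,x)\,T(t;e^{ia\cdot})(x)$. Theorem \ref{sect2-thm1} supplies $F_N(\cdot,a)\to e^{ia(\cdot)}$ in $A_1(\C)$, and the continuity of $T$ just established then gives $T(t;F_N(\cdot,a))\to T(t;e^{ia\cdot})$ in $A_{2,0}(\C_x)$, uniformly for $t$ in any compact subset of $]0,+\infty[$. Convergence in $A_{2,0}(\C_x)$ automatically entails locally uniform convergence on $\C_x$ of all $(d/dx)^k$-derivatives by Cauchy's estimates; combining this with the uniformity in $t$ and Leibniz applied to the smooth factor $K(t,x)$ gives $\psi_N(\cdot,\cdot,a)\to \varphi_a$ locally uniformly on $\mathscr T$, which is the supershift property in the sense of Definition \ref{sect5-def1} (with $\mathscr T$ itself as $\mathscr F$-supershift domain). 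For the mixed derivatives, since both $\psi_N(\cdot,\cdot,a)$ and $\varphi_a$ solve the Schr\"odinger equation on $\mathscr T$, the identity $\partial_t^\mu\partial_x^\nu=(2i)^{-\mu}(-\partial_x^2+u/x^2)^{\odot\mu}\odot \partial_x^\nu$ holds on both sides; because $u/x^2$ is smooth and bounded on any compact $K\subset\mathscr T$, the resulting differential operator preserves the locally uniform convergence on $K$. The main delicate point is verifying that the Fresnel regularization of Section \ref{sect4} indeed renders $\varphi_\lambda$ a bona fide $C^\infty$ solution of the Cauchy-Kowalevski problem on $\mathscr T$ --- i.e.\ that arbitrary $(\partial_t,\partial_x)$ differentiations remain legitimate inside the regularized integral; once this is settled, all convergence statements follow essentially automatically from Theorems \ref{sect5-thm1} and \ref{sect2-thm1}.
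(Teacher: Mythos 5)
Your proof follows essentially the same route as the paper's: extract the Green function kernel, factor out the smooth prefactor $K(t,x)$, recast the remaining integral against the entire factor $E_\nu(xx'/t)$ as a Fresnel-type integral operator in the framework of Section~\ref{sect4}, bound the amplitude via Young's inequality to place it under the hypotheses of Theorem~\ref{sect5-thm1} with $p=1$, $\check p=2$, and transfer the $A_1(\C)$-convergence from Theorem~\ref{sect2-thm1}. The treatment of mixed $(\mu,\nu)$-derivatives via the Schr\"odinger identity is also the same.

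The one point you flag as ``the main delicate point'' but leave unaddressed is in fact treated explicitly in the paper: to justify that the regularized Fresnel integral in your display for $\varphi_\lambda$ coincides with the genuine (semi-convergent) integral and therefore gives the bona fide evolution, the paper invokes the large-argument asymptotic expansion of $E_\nu$ (the expansion of the Bessel function $J_\nu$ from Watson, with explicit remainder bounds $|R_{2M}|$, $|R_{2M+1}|$), establishing that $\int_0^\infty (x')^{\nu+1/2} e^{i(x')^2/(2t)} E_\nu(xx'/t)\,e^{i\lambda x'}\,dx'$ is semi-convergent, and then applies Proposition~\ref{sect4-prop1} to identify that value with the regularized integral of Section~\ref{sect4}. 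Without this step the object $\varphi_\lambda$ you manipulate through the convolution machinery has no a priori link to the actual solution of the Cauchy problem, so your argument genuinely needs this filled in; the Young-inequality estimate alone controls $E_\nu$ near infinity in $\C$ but does not by itself yield semi-convergence along the real ray nor the identification with the Abel-type regularization. Once that is supplied, the rest of your argument --- continuity of the integral operator, transfer of $A_1$-convergence, derivative propagation via the PDE and Cauchy estimates --- matches the paper's reasoning.
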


\begin{proof}
Let $\lambda \in \R$. The evolution of the initial datum
$x\in U \mapsto e^{i\lambda x}$ through the Schr\"odinger equation
\eqref{sect5-eq6} is explicited (for the moment formally)  thanks to the expression
\eqref{sect5-eq8} of the Green function as
\begin{equation}\label{sect5-eq9}
(t,x) \in \mathscr T
\longmapsto
\frac{(-i)^{\nu +1}}{2^\nu}\, e^{ix^2/(2t)}\,
\frac{x^{\nu+1/2}}{t^{\nu+1}}\,
\int_{0}^\infty
(x')^{\nu+1/2}\, e^{i (x')^2/(2t)}\, E_\nu \Big(\frac{xx'}{t}\Big) e^{i\lambda x'}\, dx'.
\end{equation}
For any $M\in \N$ such that $2M>\nu-1/2$ and any $y>0$, one has
\begin{multline*}
E_\nu(y) = \frac{1}{\sqrt \pi}
\, \Big(\frac{2}{y}\Big)^{\nu+1/2}\,
\Big(\cos \big(y -\nu \pi/4 -\pi/2\big) \Big( \sum\limits_{\kappa =0}^{M-1}
(-1)^\kappa \frac{a_{2\kappa}(\nu)}{y^{2\kappa}} + R_{2M}(\nu,y)\Big)
\\
+ \sin \big(y -\nu \pi/4 - \pi/2\big) \Big( \sum\limits_{\kappa =0}^{M-1}
(-1)^\kappa \frac{a_{2\kappa+1}(\nu)}{y^{2\kappa+1}} + R_{2M+1}(\nu,y)\Big)\Big)
\end{multline*}
with
$$
|R_{2M}(\nu,y)| < \frac{|a_{2M}(\nu)|}{y^{2M}},\quad
|R_{2M+1}(\nu,y)| < \frac{|a_{2M+1}(\nu)|}{y^{2M+1}},
$$
where
\begin{equation*}
a_k(\nu) = (-1)^k \frac{\cos (\pi \nu)}{\pi}
\, \frac{\Gamma (k+1/2 +\nu) \Gamma (k+1/2-\nu)}{2^k \Gamma(k+1)}\quad
\forall\, k\in \N
\end{equation*}
(see \cite[pp. 207-209]{Watson}). It follows from such developments, together with Proposition \ref{sect4-prop1}, that the integral in \eqref{sect5-eq9} exists for any $(t,x)\in \mathscr T$ as a semi-convergent integral (of the Fresnel-type), whose value coincides with the regularized integral described in section \ref{sect4}.
Set now
\begin{eqnarray*}
\EuFrak T = ]0,+\infty[, && \phi~: t\in \EuFrak T
\longmapsto -\frac{1}{2t} \in ]-\infty,0[\\
B_j~: (t,Z,\check Z) \in \EuFrak T \times \C^2
&& \longmapsto \begin{cases} E_\nu \Big(\displaystyle{\frac{Z \check Z}{t}}\Big)\
{\rm if}\ j=0 \\
0 \ {\rm if}\ j\in \N^*.
\end{cases}, \quad \chi := \nu + \frac{1}{2},
\end{eqnarray*}
in order to fit with the setting described in Theorem \ref{sect5-thm1}.
Since $E_\nu\in A_1(\C)$ and
$$
\frac{|Z\, \check Z|}{t}  =
\frac{1}{t}\times \varepsilon |Z| \times
\frac{|\check Z|}{\varepsilon} \leq \frac{1}{2t} \Big( \varepsilon^2\, |Z|^2 +
\frac{|\check Z|^2}{\varepsilon^2}\Big)\, \forall t>0,\ \forall\,
(Z,\check Z)\in \C^2
$$
the operator with order $0$ given as
$t \mapsto B_0(t,Z,\check Z) \, (d/dZ)^0$ satisfies the hypothesis of this theorem with $p=1$ and $\check p = 2$.
Then the operator
$$
\D(t) = \int_0^\infty Z^{\nu+1/2}\, e^{-i\, Z^2/(2t)}\, E_\nu\Big(
\frac{Z\cdot \check Z}{t}\Big)(\cdot )\, dZ
$$
acts continuously locally uniformly in $t \in ]0,+\infty[$ from $A_1(\C)$ into $A_2(\C)$. For any $\lambda \in \R$ and $t >0$, the function
$x\in ]0,+\infty[ \mapsto \varphi_\lambda (t,x)$ is $C^\infty$ because of its expression \eqref{sect5-eq9}. Moreover, when $a\in \R \setminus [-1,1]$, it follows from Theorem \ref{sect2-thm1} that the sequence
$$
\Big\{z\in \C \mapsto \D(t)\Big(\sum\limits_{j=0}^N C_j(N,a) e^{i(1-2j/N)(\cdot)}\Big)(z)\Big\}_{N\geq 1}
$$
converges in $A_2(\C)$ (locally uniformly with respect to $t>0$) to $z\mapsto \D(t)(e^{ia(\cdot)})$. One concludes then to the second assertion in the statement of the theorem. As for the last assertion, it follows from the fact that the action of $i\partial/\partial t$ and
$\mathscr H(x)$ coincide on solutions of \eqref{sect5-eq6}, together with the continuity of the operator $d/dz$ from $A_2(\C)$ into itself.
\end{proof}

\subsection{The Schr\"odinger Cauchy problem for the quantum harmonic oscillator}\label{sect5-3}

Let now $U=\R$ and the hamiltonian in \eqref{sect5-eq6} be $x\in \R \mapsto \mathscr H(x) = - (\partial^2/\partial x^2)/2 + x^2/2$. The corresponding Cauchy-Kowalevski problem (with $[0,+\infty[ \times \R$ as phase space) is the {\it Schr\"odinger Cauchy problem for the quantum harmonic oscillator}, see
\cite[\S 5.3, \S. 6.4]{acsst5} or \cite{harmonic} for more references.
In this case again, the Green function can be explicited and is therefore handable. It is the locally integrable function in
$]0,+\infty[\times \R \times \R$ defined as
\begin{multline}\label{sect5-eq10}
G(t,x,t'=0,x') =
\sqrt{\frac{1}{2i\pi \sin t}} \, e^{\displaystyle{
i \, \Big(\frac{(x^2+(x')^2)\cos t - 2 xx'}{2 \sin t}}\Big)} \\
= \Big(\sqrt{\frac{1}{2i\pi \sin t}}\,
e^{\displaystyle{i\, \frac{{\rm cotan}\, t}{2}\, x^2}}\Big)\,
e^{\displaystyle{i\, \frac{{\rm cotan}\, t}{2}\, (x')^2}}\, e^{ \displaystyle{- i \frac{xx'}{\sin t}}}\quad (t>0, x, x'\in \R).
\end{multline}

\begin{proposition}\label{sect5-prop2}
Let $\mathscr T = ]0,+\infty[\times \R$ and $\mathscr H~: x\in \R \mapsto -(\partial^2/\partial x^2 - x^2)/2$.
For any $\lambda \in \R$, the initial datum
$x\in \R \mapsto e^{i\lambda x}$ evolves through the
Cauchy-Kovalewski Schr\"odinger equation \eqref{sect5-eq6}
to a $\C$-valued distribution $\varphi_\lambda
\in \mathscr D'(\mathscr T,\C)$ with singular support
$\pi (2\N+1)/2 \times \R$. Let
$\mathscr U = \mathscr T \setminus (\pi (2\N+1)/2 \times \R)$. For any $a\in \R \setminus [-1,1]$, the family $\{(\varphi_\lambda)_{|\mathscr U}\,;\, \lambda \in \R\}$, considered as a family of functions, admits as a
$\mathscr F$-supershift (in the sense of Definition \ref{sect5-def1}) the sequence
$$
\{(t,x)\in \mathscr U \mapsto \psi_N(t,x,a)\}_{N\geq 1} =
\Big\{\sum\limits_{j=0}^N C_j(N,a)\, (\varphi_{1-2j/N})_{|\mathscr U}\Big\}_{N\geq 1}
$$
with $\mathscr F$-supershift domain equal to $\mathscr U$.
Moreover, for any $(\mu,\nu)\in \N^2$, the sequence of functions
from $\mathscr U$ to $\C$
$$
\frac{\partial^{\mu+\nu}}{\partial t^\mu
\partial x^\nu} (\psi_N(t,x,a)) = \frac{1}{(2i)^{\mu}}
\Big(\Big(-\frac{\partial^2}{\partial x^2} + x^2 \Big)^{\odot^\mu}
\odot \frac{\partial^{\nu}}{\partial x^\nu}\Big) (\psi_N(t,x,a))
$$
converges uniformly on any compact $K\subset\subset \mathscr U$ to the function
\begin{equation*}
(t,x) \in \mathscr T \longmapsto \frac{1}{(2i)^{\mu}}
\Big(\Big(-\frac{\partial^2}{\partial x^2} + x^2 \Big)^{\odot^\mu}
\odot \frac{\partial^{\nu}}{\partial x^\nu}\Big)(\varphi_a(t,x)).
\end{equation*}
\end{proposition}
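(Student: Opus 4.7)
The proof proceeds along the same lines as that of Proposition~\ref{sect5-prop1}, substituting Mehler's kernel \eqref{sect5-eq10} for the centrifugal Green function. First, for any $\lambda\in\R$ and $(t,x)\in\mathscr U$, I would write formally
\begin{equation*}
\varphi_\lambda(t,x)\;=\;\sqrt{\frac{1}{2i\pi\sin t}}\;e^{i\cot(t)\,x^2/2}\int_{\R}e^{i\cot(t)\,(x')^2/2}\,e^{-ixx'/\sin t}\,e^{i\lambda x'}\,dx',
\end{equation*}
and interpret the integral via the regularization of Section~\ref{sect4}, splitting the integration over $\R$ into two half-line integrations as in Remark~\ref{sect4-rem1}. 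The resulting phase $\phi(t)=-\cot(t)/2$ is finite and of constant sign on each component of $\mathscr U\setminus(\pi\Z\times\R)$. To treat the remaining points $t=k\pi\in\mathscr U$ (at which $\sin t=0$ and Mehler's kernel degenerates formally although the evolved datum is in fact smooth), I would complement this picture by the Hermite expansion $e^{i\lambda x}=\sum_n c_n(\lambda)\,h_n(x)$ with $(c_n(\lambda))_n$ rapidly decreasing, giving
\begin{equation*}
\varphi_\lambda(t,x)\;=\;\sum_{n\geq 0}c_n(\lambda)\,e^{-i(n+1/2)t}\,h_n(x),
\end{equation*}
which realizes $\varphi_\lambda$ as a tempered distribution globally on $\R_t\times\R_x$, manifestly $C^\infty$ on $\mathscr U$ and agreeing with the Fresnel representation wherever both make sense. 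Since at $t=(2k+1)\pi/2$ the propagator $e^{-it\mathscr H}$ coincides (up to a unit phase) with a rescaled Fourier transform, the Fourier image of $e^{i\lambda x'}$ being a Dirac mass $\delta(x\mp\lambda)$, one reads off that the singular support of $\varphi_\lambda$ equals $\pi(2\N+1)/2\times\R$.

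Second, I would apply Theorem~\ref{sect5-thm1} on a relatively compact open $\EuFrak T\subset\mathscr U\setminus(\pi\Z\times\R)$, with parameter $\EuFrak t=(t,x)$, exponent $\chi=0$, phase $\phi$ as above, and coefficients
\begin{equation*}
B_0(\EuFrak t,Z,\check Z)\;=\;e^{-i\,x\,Z/\sin t},\qquad B_j\equiv 0\ \text{for}\ j\geq 1.
\end{equation*}
The pointwise bound $|e^{-ixZ/\sin t}|\leq e^{M|Z|}\leq C_\varepsilon e^{\varepsilon|Z|^2}$ (valid uniformly in $\EuFrak T$, with $M$ depending only on $\EuFrak T$) shows that \eqref{sect5-eq5} holds with $p=1$, $\check p=2$, so the Fresnel integral operator
\begin{equation*}
\D(t,x)\,:\,f\longmapsto\sqrt{\frac{1}{2i\pi\sin t}}\,e^{i\cot(t)x^2/2}\int_{\R}f(x')\,e^{i\cot(t)(x')^2/2}\,e^{-ixx'/\sin t}\,dx'
\end{equation*}
acts continuously from $A_1(\C)$ into $A_2(\C)$, locally uniformly in $(t,x)\in\EuFrak T$. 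Applying $\D(t,x)$ to $F_N(\cdot,a)\in A_1(\C)$ and using that $F_N(\cdot,a)\to e^{ia\,\cdot}$ in $A_1(\C)$ (Theorem~\ref{sect2-thm1}) yields
\begin{equation*}
\sum_{j=0}^N C_j(N,a)\,\varphi_{1-2j/N}(t,x)\ \underset{N\to\infty}{\longrightarrow}\ \varphi_a(t,x)
\end{equation*}
locally uniformly in $\EuFrak T$; combining with the Hermite formula to bridge across $t=k\pi$, the convergence extends locally uniformly to all of $\mathscr U$, whence the supershift assertion with supershift domain~$\mathscr U$.

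For the last assertion on derivatives, I would argue exactly as in Proposition~\ref{sect5-prop1}: since every $\varphi_\lambda$ (and hence every finite linear combination thereof) solves the Schr\"odinger equation \eqref{sect5-eq6} on $\mathscr U$, each occurrence of $\partial/\partial t$ can be replaced by $(1/i)\mathscr H(x)=(1/2i)(-\partial^2/\partial x^2+x^2)$. Locally uniform convergence on compacts of $\mathscr U$ is then preserved under differentiation $\partial/\partial x$ (continuous on $A_2(\C)$ via Lemma~\ref{sect2-lem2}) and under multiplication by the smooth function $x^2$, yielding the desired identity \eqref{sect5-eq10} for arbitrary $(\mu,\nu)$.

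The main obstacle, as anticipated, is the compatibility of the Fresnel regularization with the Hermite-expansion picture at the points $t=k\pi\in\mathscr U$, where the formula \eqref{sect5-eq10} degenerates: the Fresnel operator $\D(t,x)$ is not directly defined there, whereas the Hermite series converges smoothly across these times. The resolution is that the Hermite series provides a regularization-free, globally defined representation of $\varphi_\lambda$ on~$\mathscr U$ that agrees with $\D(t,x)(e^{i\lambda\cdot})$ on the natural domain of the Fresnel operator, so the two pictures glue consistently and the supershift conclusion propagates across $t=k\pi$.
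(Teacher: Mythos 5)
Your proposal correctly identifies the Fresnel picture and the application of Theorem~\ref{sect5-thm1} on the region $t\notin\pi\Z$, but the ``Hermite bridge'' you propose to carry the convergence across the lines $t=k\pi$ is not a proof. First, the claim that $(c_n(\lambda))_n$ is rapidly decreasing is unjustified: with $h_n$ the $L^2$-normalized Hermite functions, $c_n(\lambda)\propto i^n h_n(\lambda)$ and Cram\'er's bound gives only polynomial decay $O(n^{-1/12})$ for fixed $\lambda$, so the series $\sum_n c_n(\lambda)e^{-i(n+1/2)t}h_n(x)$ converges merely in $\mathscr S'$, not uniformly, and the asserted smoothness on $\mathscr U$ is not ``manifest'' from it. Second, and more fundamentally, even granting smoothness of each $\varphi_\lambda$ across $t=k\pi$, nothing in your argument yields the \emph{locally uniform} convergence of $\sum_j C_j(N,a)\varphi_{1-2j/N}$ to $\varphi_a$ there: you would have to prove uniform estimates on $\sum_j C_j(N,a)c_n(1-2j/N)-c_n(a)$ that are summable against the $e^{-i(n+1/2)t}h_n(x)$ side with locally uniform control, and ``gluing'' the Hermite and Fresnel pictures does not supply them. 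The sentence ``combining with the Hermite formula to bridge across $t=k\pi$, the convergence extends locally uniformly'' is the gap.

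The paper's route closes this by avoiding the bridge entirely. Rather than treating the Fresnel operator abstractly near $t=k\pi$, it evaluates the regularized Mehler integral on $e^{i\lambda(\cdot)}$ in closed form (citing \cite[Proposition 5.3.1]{acsst5}), obtaining
\[
\varphi_\lambda(t,\check Z)=(\cos t)^{-1/2}\,e^{-i\check Z^2\tan(t)/2}\,e^{-i\lambda^2\tan(t)/2}\,e^{i\lambda\check Z/\cos t},
\]
which is visibly smooth on all of $\mathscr U$ (only $\cos t$, not $\sin t$, appears in denominators). It then factors the $\lambda$-dependence as a smooth prefactor times an infinite-order differential operator
$\D(t)=\sum_{j\geq 0}\frac{1}{j!}\bigl(i\tfrac{\sin 2t}{4}\bigr)^j(d/dW)^{2j}$
applied to $e^{i\lambda(\cdot)}$, whose symbol $e^{i\sin(2t)W^2/4}$ satisfies the hypothesis of Lemma~\ref{sect2-lem2} with $p=2$ locally uniformly in $t$ over compacts of $\mathscr U$. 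Theorem~\ref{sect2-thm1} then immediately gives locally uniform convergence of $\sum_j C_j(N,a)\varphi_{1-2j/N}$ on all of $\mathscr U$, including across $t=k\pi$, and the singular support $\pi(2\N+1)/2\times\R$ is read off from the explicit formula rather than inferred heuristically from the Fourier-transform interpretation of the propagator. This closed-form/operator factorization, which your proposal does not contain, is the essential ingredient that makes the argument go through at $t=k\pi$.
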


\begin{proof}
Consider the two (for the moment formal) operators
\begin{equation}\label{sect5-eq11}
t\in\, ]0,+\infty[\,\setminus\, \pi \N^*/2\, \longmapsto\, |\sin t|\, \int_0^\infty
e^{\displaystyle{i\, \frac{\sin 2t}{4}\, Z^2}}
e^{ \displaystyle{- i \varpi\, \,{\rm sign}\, (\sin (t))\, \check Z Z }}\circ H_{\varpi |\sin t|}\, (\cdot)\, dZ
\end{equation}
($\varpi=\pm 1$) which appear (after performing the change of variables
$Z\leftrightarrow |\sin t|\, Z$ on $[0,+\infty[$) in the splitting of
$$
t\in\, ]0,+\infty[\,\setminus\, \pi \N^*/2 \, \longmapsto \int_\R e^{\displaystyle{i\, \frac{{\rm cotan}\, t}{2}\, Z^2}}\,
e^{ \displaystyle{- i\, \check Z Z/\sin t}}\, (\cdot)\, dZ
$$
(see Remark \ref{sect4-rem1}).
Set now
\begin{eqnarray*}
\EuFrak T = ]0,+\infty[\, \setminus\, \pi \N^*/2, && \phi~: t\in \EuFrak T
\longmapsto - \frac{\sin (2t)}{4}\\
B_j~: (t,Z,\check Z) \in \EuFrak T \times \C^2
&& \longmapsto \begin{cases} \exp (-i\, \varpi\, {\rm sign}\, (\sin (t))\, Z \check Z)\odot
H_{\varpi |\sin t|}
\
{\rm if}\ j=0 \\
0 \ {\rm if}\ j\in \N^*.
\end{cases},\quad \chi=0
\end{eqnarray*}
($\varpi =\pm 1$) in order to fit with the setting described in Theorem \ref{sect5-thm1}.
As in the proof of Proposition \ref{sect5-prop1}, this theorem applies here and the two operators \eqref{sect5-eq11} act continuously from $A_1(\C)$ to $A_2(\C)$ (locally uniformly with respect to the parameter $t\in \EuFrak T$). Note again that the Fresnel-type integrals \eqref{sect5-eq11}, where $Z \mapsto e^{i\lambda Z}$ ($\lambda \in \R)$ is taken inside the bracket and $\check Z \in \R$, are semi-convergent and their values as semi-convergent integrals coincide with the values that are obtained by regularization as in section \ref{sect4}. In fact, in the case where $\check Z = x\in \R$
and $t\in \EuFrak T$, the value of
$$
\Big(\sqrt{\frac{1}{2i\pi \sin t}}\,
e^{\displaystyle{i\, \frac{{\rm cotan}\, t}{2}\, \check Z^2}}\Big)
\int_\R e^{\displaystyle{i\, \frac{{\rm cotan}\, t}{2}\, Z^2}}\,
e^{ \displaystyle{- i\, \check Z Z/\sin t}}\, (\cdot)\, dZ
$$
(understood as a regularized integral, see section \ref{sect4}, in particular Remark \ref{sect4-rem1}) equals
$$
(\cos t)^{-1/2} e^{- i \check Z^2\, \frac{{\rm tan}(t)}{2}}\,
e^{- i \lambda^2\, \frac{{\rm tan}(t)}{2}} \odot H_{1/\cos t}(e^{i\lambda (\cdot)})(\check Z)
$$
(see \cite[Proposition 5.3.1]{acsst5}).
Since $(t,x) \in ]0,+\infty[\times \R
\longmapsto (\cos t)^{-1/2} e^{- i x^2\, {\rm tan}(t)}\,
e^{- i \lambda^2\, {\rm tan}(t)}$ is a locally integrable function, the
initial datum $x\in \R \mapsto e^{i\lambda x}$ evolves through the Schr\"odinger equation \eqref{sect5-eq6} as a distribution $\varphi_\lambda$
(in fact defined by a locally integrable function).
Let $\D(t)$ the differential operator
$$
\D~: t\in ]0,\infty[\, \setminus\,
\pi\, \frac{2\N+1}{2} \longmapsto \sum\limits_{j=0}^\infty
\frac{1}{j!} \Big( i \frac{\sin 2t}{4}\Big)^j
(d/dW)^{2j}.
$$
Since
$$
(\cos t)^{-1/2} e^{- i \check Z^2\, \frac{{\rm tan}(t)}{2}}\,
e^{- i \lambda^2\, \frac{{\rm tan}(t)}{2}} \odot H_{1/\cos t}(e^{i\lambda (\cdot)})(\check Z) = (\cos t)^{-1/2} e^{- i \check Z^2\, \frac{{\rm tan}(t)}{2}}\, \D(t) (e^{i\lambda (\cdot)})(\check Z),
$$
and $\D$ acts continuously locally uniformly in $t$ from $A_1(\C)$ to $A_2(\C)$
thanks to Lemma \ref{sect2-lem2}, the sequence
$$
\Big\{\sum\limits_{j=0}^N C_j(N,a)\, (\varphi_{1-2j/N})_{|\mathscr U}\Big\}_{N\geq 1}
$$
is, for any $a\in \R\setminus [-1,1]$, a supershift for the family $\mathscr F = \{(\varphi_\lambda)_{|\mathscr U}\,;\, \lambda \in \R\}$ (with $\mathscr F$-supershift domain $\mathscr U$). The last assertion follows from the same argument than that used for the last assertion in Proposition \ref{sect5-prop1}.
\end{proof}

\section{Singularities in the quantum harmonic oscillator evolution}\label{sect6}

This section is the natural continuation of subsection \ref{sect5-3}.
We continue to investigate with respect to the notion of supershift the evolution of initial data $x\in \R \mapsto e^{i\lambda x}$, when $\lambda \in \R$, through the Cauchy-Schr\"odinger problem for the quantum harmonic oscillator and focus now on singularities.
In this section we keep the same notations as in Proposition \ref{sect5-prop2} and fix a point $(t_0,x_0)$ in $\mathscr T
\setminus \mathscr U$. We will just consider the case $t_0=\pi/2$ since
the situation is essentially identical at any point $((2k+1)\pi/2,x_0)$ with $k\in \N$ and $x_0\in \R$.
\vskip 2mm
\noindent
Let, for $\lambda \in \R$, $\varphi_\lambda \in \mathscr D'(\mathscr T,\C)$ be the distribution evolved from the initial datum $x\mapsto e^{i\lambda x}$ through the Schr\"odinger operator for the quantum oscillator problem \eqref{sect5-eq6} (with $\mathscr H~: x\in \R \mapsto (-\partial^2/\partial x^2 + x^2)/2$).
\vskip 2mm
\noindent
Let $\theta \in \mathscr D(\mathscr T,\C)$ be a test-function with support in a small neighborhood of $(\pi/2,x_0)$ and
$(t,x) \mapsto \xi(t,x) := \theta (t,x) \exp ((i x^2 {\rm cotan}\, t)/2)/\sqrt{2i\pi}$. One has (formally) for any $\lambda \in \R$,
\begin{multline}\label{sect6-eq1}
\langle \varphi_\lambda,\theta \rangle
= - \int_{\R^2}
\Big[
\int_\R e^{i\frac{\sin u}{2}\, Z^2} e^{-i \check Z Z/\sqrt{\cos u}} (e^{i\lambda (\cdot)})\, dZ\Big]_{\check Z = x}\, \xi(\pi/2-u,x)\, du\, dx \\
=  \int_{\R^2}
\Big[
\int_\R e^{i \frac{\sin u}{2}\, Z^2} e^{-i \check Z Z} (e^{i\lambda (\cdot)})\, dZ\Big]_{\check Z = x}\, \widetilde \xi(u,x)\, du\, dx,
\end{multline}
where $\widetilde \xi(u,x) = - \sqrt{\cos u}\, \xi(\pi/2-u,\sqrt{\cos u}\, x)$ is a test-function with support about $(0,x_0)$.
The regularized integral is then
\begin{multline*}
\lim\limits_{\varepsilon \rightarrow 0_+}
\int_{\R^2}
\Big[
\int_\R e^{-\varepsilon Z^2}\, e^{i \frac{\sin u}{2}\, Z^2} e^{-i \check Z Z} (e^{i\lambda (\cdot)})\, dZ\Big]_{\check Z = x}\, \widetilde \xi(u,x)\, du\, dx \\
= \lim\limits_{\varepsilon \rightarrow 0_+}
\int_{\R^2}
\Big[
\int_\R e^{-\varepsilon Z^2}
e^{i \frac{\sin u}{2}\, \big(Z^2 -2(\check Z -\lambda)/\sin u\big)} dZ\Big]_{\check Z=x} \, \widetilde \xi(u,x)\, du\, dx \\
= \int_{\R^2}
\Big[\exp \Big( \frac{2i}{\sin u}(\check Z -\lambda)^2\Big))\Big]_{\check Z=x}\,
\sqrt{\frac{2 i\pi}{\sin u}}\, \widetilde \xi(u,x)\, du\, dx \\
=
\int_{\R^2}
\Big[\exp \Big( \frac{i}{v}(\check Z -\lambda)^2\Big))\Big]_{\check Z=x}
\sqrt{\frac{1}{v}}\, \widetilde \theta(v,x)\, dv\, dx
\end{multline*}
for some test-function $(v,x)\mapsto \widetilde \theta(v,x)$ with support about $(0,x_0)$
(one uses here Lebesgue domination theorem and the change of variables $(\sin u)/2\longleftrightarrow v$ about $u=0$).
Though such expression makes sense when $\lambda \in \R$
(since $|\exp \big( i (x- \lambda)^2/v)\big)| =1$ for any point $(v,x)\in {\rm Supp}(\widetilde \xi)$), it does not make sense anymore when $\lambda \in \C$.
In order to overcome this difficulty, one needs to formulate the following lemma.

\begin{lemma}\label{sect6-lem1}
Let $\D(\check Z)$ ($\check Z\in \C$) be a differential operator of the form
\begin{equation}\label{sect6-eq2}
\sum\limits_{\kappa = 0}^\infty \Big[\frac{A_\kappa(\check Z,(d/dZ))}{\kappa!}(\cdot)\Big]_{Z=0}\, (d/dv)^\kappa,
\end{equation}
(where $A_\kappa \in \C[[\check Z,d/dZ]]$ for any $\kappa \in \N$),
considered as acting from the space of entire functions of the variable $Z$ to the space $\C[\check Z][[d/dv]]$.
Suppose that there exist $p\geq 1$ and $\check p\geq 1$ and
$B,\check B\geq 0$ such that
\begin{equation}\label{sect6-eq3}
\sup\limits_{\kappa \in \N,\check Z\in \C}
\big(|A_\kappa(\check Z,W)|\, \exp( - B\, |W|^p - \check B\, |W|^{\check p})\big)<+\infty.
\end{equation}
Then, for any $b\geq 0$, there exists $A^{(b)}\geq 0$ such that
$$
\forall\, C\geq 0,\quad
\forall\, f\in  A_1^{C,b}(\C), \quad
\sup\limits_{\kappa\in \N}
\big|A_\kappa (\check Z,(d/dZ))(f)(0)\big|
\leq C\, A^{(b)}\, e^{\check B\, |\check Z|^{\check p}}.
$$
In particular, for any $f\in A_1^{C,b}(\C)$,
$\D(\check Z)(f)$ remains an infinite order differential operator
$\sum_{\kappa\geq 0} \alpha_\kappa(\check Z)(f)\, (d/dv)^\kappa$ with coefficients satisfying (independently of $f\in A_1^{C,b}(\C)$)
$$
\sum\limits_{\kappa \in \N}
k!\, |\alpha_\kappa (\check Z)(f)|\,
\exp(-B |\check Z|^{\check p}) = C\, A^{(b)} < +\infty.
$$
\end{lemma}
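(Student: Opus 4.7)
The plan is to imitate the proof of Lemma \ref{sect2-lem3}, reducing the whole question to two-variable Cauchy estimates followed by a Mittag-Leffler-type resummation. First I would expand each $A_\kappa(\check Z, W) = \sum_{j, n \geq 0} \alpha_{j, n}(\kappa)\, \check Z^j W^n$ as a formal bivariate series and apply Cauchy formulae on a bidisk $\{|\check Z| = \check r\} \times \{|W| = r\}$ to the growth hypothesis \eqref{sect6-eq3}. Optimizing in $\check r$ and $r$ exactly as in the proof of Lemma \ref{sect2-lem3}, I would deduce uniform coefficient bounds of the form
$$|\alpha_{j, n}(\kappa)| \leq K\, \frac{\check b^{\,j}\, b_0^{\,n}}{\Gamma(j/\check p + 1)\, \Gamma(n/p + 1)}$$
with constants $K, \check b, b_0$ depending only on $B, \check B, p, \check p$ and the supremum in \eqref{sect6-eq3}, but crucially \emph{independent of $\kappa$}.

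Next, given any $f(Z) = \sum_\ell f_\ell Z^\ell \in A_1^{C, b}(\C)$ (so $|f_\ell| \leq C b^\ell/\ell!$), term-by-term differentiation followed by evaluation at $Z = 0$ gives
$$A_\kappa(\check Z, d/dZ)(f)(0) = \sum_{j, n \geq 0} \alpha_{j, n}(\kappa)\, \check Z^j\, n!\, f_n,$$
and the cancellation $|n!\, f_n| \leq C b^n$ decouples the double sum into a product of two Mittag-Leffler-type series:
$$|A_\kappa(\check Z, d/dZ)(f)(0)| \leq C\, K\, E_{1/p, 1}(b_0\, b)\, E_{1/\check p, 1}(\check b\, |\check Z|).$$
The first factor is a finite constant depending only on $b$, which I would fold into the constant $A^{(b)}$ of the statement. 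The second is the Mittag-Leffler function of order $\check p$ and type governed by $\check b^{\check p}$, so it is dominated by a constant multiple of $\exp(\check B\, |\check Z|^{\check p})$, provided the Cauchy optimization is tuned so that $\check b^{\check p} \leq \check B$. This yields the first bound uniformly in $\kappa$, which is the content of the first assertion.

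The second assertion is then pure bookkeeping: by construction $\kappa!\, \alpha_\kappa(\check Z)(f) = A_\kappa(\check Z, d/dZ)(f)(0)$, so the previous estimate is already the announced bound on $\kappa!\, |\alpha_\kappa(\check Z)(f)|\, \exp(-\check B\, |\check Z|^{\check p})$.

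The main delicate point, and the step I expect to require the most care, is matching \emph{exactly} the constant $\check B$ in the final exponential, rather than losing an $\varepsilon$ and ending up with $\check B + \varepsilon$. This is handled by inserting the sharp Stirling asymptotic $\Gamma(j/\check p + 1) \sim \sqrt{2\pi j/\check p}\,(j/(e\check p))^{j/\check p}$ directly into the Cauchy estimate: the optimizing choice $\check r^{\check p} = j/(\check B \check p)$ then produces a clean factor $\check B^{j/\check p}$ together with an unavoidable polynomial $\sqrt{j}$, and the resummation $\sum_j \sqrt{j}\,(\check B^{1/\check p}|\check Z|)^j/\Gamma(j/\check p + 1)$ still has order $\check p$ and type $\check B$ on the nose, the polynomial factor only modifying the multiplicative prefactor which is then absorbed into $A^{(b)}$.
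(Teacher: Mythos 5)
Your approach is genuinely different from the paper's, and the difference is exactly where the gap is. The paper never expands $A_\kappa$ in $\check Z$: it fixes $\check Z$ as a parameter, notes that for each fixed $\check Z$ the hypothesis \eqref{sect6-eq3} gives $|A_\kappa(\check Z,W)|\leq A\,e^{\check B|\check Z|^{\check p}}\,e^{B|W|^p}$, and then applies Lemma~\ref{sect2-lem1} (Cauchy estimate) \emph{only in the $W$-variable} with the $\check Z$-independent part of the bound. This yields $|a_{\kappa,j}(\check Z)|\leq C_0\,e^{\check B|\check Z|^{\check p}}\,b_0^j/\Gamma(j/p+1)$ with the factor $e^{\check B|\check Z|^{\check p}}$ appearing verbatim — no Cauchy optimization, hence no Stirling polynomial, is ever performed in $\check Z$. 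The resummation over $j$ against $|f_{j+\ell}|\leq Cb^{j+\ell}/(j+\ell)!$ then produces the finite constant $E_{1/p,1}(b_0b)$ in $W$ alone, and evaluating at $Z=0$ gives the claimed bound with $\check B$ exactly.

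Your plan, by contrast, performs a full bivariate expansion $A_\kappa=\sum_{j,n}\alpha_{j,n}(\kappa)\check Z^jW^n$ and applies Cauchy estimates in \emph{both} disks, which is precisely the move you cannot afford here. As you yourself note, the optimizing Cauchy estimate in $\check Z$ carries an unavoidable Stirling factor $\sqrt{j}$, and the resummation becomes $\sum_j\sqrt{j}\,\big(\check B^{1/\check p}|\check Z|\big)^j/\Gamma(j/\check p+1)$. Your final assertion that this series has ``type $\check B$ on the nose'' so that the $\sqrt{j}$ only contributes to the prefactor $A^{(b)}$ is where the argument breaks. Having type exactly $\check B$ is a statement about $\limsup_{|\check Z|\to\infty}\ln(\cdot)/|\check Z|^{\check p}$; it does \emph{not} give a bound of the form $C\,e^{\check B|\check Z|^{\check p}}$ with $C$ independent of $\check Z$. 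Concretely, compare $\sum_j j\zeta^j/\Gamma(j/\check p+1)=\zeta\,E_{1/\check p,1}'(\zeta)\sim \check p^2\zeta^{\check p}e^{\zeta^{\check p}}$: weighting the coefficients of a Mittag--Leffler series by a power of $j$ multiplies the asymptotic by a power of $\zeta^{\check p}$, not by a constant, and your $\sqrt{j}$ weight behaves the same way (roughly $\zeta^{\check p/2}e^{\zeta^{\check p}}$). So your estimate delivers at best $C(1+|\check Z|)^{c}e^{\check B|\check Z|^{\check p}}$, or alternatively $C_\varepsilon e^{(\check B+\varepsilon)|\check Z|^{\check p}}$, not the stated $C\,A^{(b)}e^{\check B|\check Z|^{\check p}}$. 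The remedy is exactly what the paper does: do not Cauchy-estimate in $\check Z$ at all; the hypothesis already hands you the sharp exponential in $\check Z$, and Cauchy in $W$ alone is all you need before you set $Z=0$.
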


\begin{proof}
The coefficients of $A_\kappa$ as a polynomial in $d/dZ$ satisfy
$$
\sum\limits_{\kappa,j\in \N,\ \check Z\in \C}
|a_{\kappa,j}(\check Z)| \leq C_0 \frac{b_0^j}{\Gamma(j/p)+1} e^{\check B |\check Z|^{\check p}}
$$
for some absolute constants $C_0$ and $b_0$ (Lemma \ref{sect2-lem1}).
As in the proof of Lemma \ref{sect2-lem2}, one concludes that for any $f\in A_1^{C,b}(\C)$ and any $\kappa \in \N$, one has uniform estimates
$|A_\kappa(\check Z,d/dZ)(f)|\leq C\, A^{(b)}\exp (b_0 b|Z|+ \check B
|\check Z|^{\check p})$ for some positive constant $A^{(b)}$.
One gets the required estimates when evaluating at $Z=0$.
\end{proof}

\noindent
One can then complete Proposition \ref{sect5-prop2} into the following companion proposition.

\begin{proposition}\label{sect6-prop1}
Let $\mathscr T = ]0,+\infty[\times \R$ and $\mathscr H~: x\in \R \mapsto -(\partial^2/\partial x^2 + x^2)/2$.
For any $\lambda \in \R$, let $\varphi_\lambda
\in \mathscr D'(\mathscr T,\C)$ be the evolved distribution
from the initial datum $x\in \R \mapsto e^{i\lambda x}$ through the
Cauchy-Kovalewski Schr\"odinger equation \eqref{sect5-eq6}.
Let $\mathscr F=\{\varphi_\lambda\,;\, \lambda \in \R\}$, where
each $\varphi_\lambda$ is considered as a hyperfunction in $\mathscr T$.
Then, for any $a\in \R\setminus [-1,1]$, the sequence
$\big\{\sum_{j=0}^N C_j(N)\, \varphi_{1-2j/N}\big\}_{N\geq 1}$
is a $\mathscr F$-supershift of hyperfunctions over the $\mathscr F$-supershift domain $\mathscr T$.
\end{proposition}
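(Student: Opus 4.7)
The plan is to reduce the proposition to a purely local question at each singular point of $\mathscr T$. On the open set $\mathscr U = \mathscr T\setminus(\pi(2\N+1)/2\times\R)$, Proposition~\ref{sect5-prop2} already yields uniform convergence on compacts of the linear combinations $\sum_{j=0}^N C_j(N,a)\,\varphi_{1-2j/N}$ to $\varphi_a$, hence convergence as hyperfunctions on $\mathscr U$. It therefore suffices to prove the $\mathscr F$-supershift property in a neighborhood of each point of $\mathscr T\setminus\mathscr U$, and by invariance of the Green function \eqref{sect5-eq10} under $t\mapsto t+k\pi$ it is enough to work in an arbitrary neighborhood $\mathscr V$ of a point $(\pi/2,x_0)$ with $x_0\in\R$.

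For a test-function $\theta\in\mathscr D(\mathscr V,\C)$, I would start from the pairing formula derived in the lines preceding Lemma~\ref{sect6-lem1}: after regularization in the sense of Section~\ref{sect4} and the change of variables $v=(\sin u)/2$, one has
$$\langle\varphi_\lambda,\theta\rangle=\int_{\R^2}\exp\!\Big(\frac{i}{v}(x-\lambda)^2\Big)\sqrt{1/v}\,\widetilde\theta(v,x)\,dv\,dx,\qquad \lambda\in\R,$$
for a compactly supported $\widetilde\theta$ about $(0,x_0)$. The central step is to express this pairing as the action of an infinite-order differential operator $\D(\check Z)$ of the form \eqref{sect6-eq2} on the entire function $Z\mapsto e^{i\lambda Z}$, with $\check Z=x$ playing the role of the spatial variable. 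Expanding
$$\exp\!\Big(\frac{i}{v}(x-\lambda)^2\Big)=e^{ix^2/v}\sum_{n=0}^\infty\frac{1}{n!}\Big(\frac{i}{v}\Big)^n(x-\lambda)^{2n},$$
integrating each term against $\widetilde\theta$ (regularizing the $v^{-n-1/2}$ Fresnel singularities as in Section~\ref{sect4}) and recognizing $(x-\lambda)^{2n}$ as the result of applying $(\check Z-(1/i)d/dZ)^{2n}$ to $e^{i\lambda Z}$ and evaluating at $Z=0$ should produce the operators $A_\kappa(\check Z,d/dZ)$ together with the coefficients of $(d/dv)^\kappa$. A direct combinatorial estimation would then verify the bi-exponential bound \eqref{sect6-eq3} with $p=1$ and $\check p=2$, the latter coming from the Gaussian factor $e^{ix^2/v}$ absorbed into the coefficients.

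Granted this construction, Lemma~\ref{sect6-lem1} ensures that for every $f\in A_1^{C,b}(\C)$ the coefficients of $\D(\check Z)(f)$ satisfy estimates uniform in $\check Z$ on compacts. Combining this with Theorem~\ref{sect2-thm1}, which gives $F_N(\cdot,a)\to e^{ia(\cdot)}$ in $A_1(\C)$, and with the characterization of $A_1(\C)$-convergence in Proposition~\ref{sect2-prop1}, one would obtain
$$\sum_{j=0}^N C_j(N,a)\,\langle\varphi_{1-2j/N},\theta\rangle=\bigl\langle\D(\check Z)(F_N(\cdot,a)),\widetilde\theta\bigr\rangle\longrightarrow\bigl\langle\D(\check Z)(e^{ia(\cdot)}),\widetilde\theta\bigr\rangle=\langle\varphi_a,\theta\rangle,$$
which is the hyperfunction convergence needed in $\mathscr V$. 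The main obstacle lies in the second step: constructing $\D(\check Z)$ so that it both reproduces the regularized Fresnel kernel and satisfies the bi-exponential estimate \eqref{sect6-eq3}. The Fresnel-type singularity at $v=0$ is precisely what prevented the distributional supershift on $\mathscr U$ from extending across $t=(2k+1)\pi/2$; redistributing this singularity between powers $(d/dv)^\kappa$ and polynomial growth in $\check Z$ of the resulting coefficients is where the combinatorial bookkeeping concentrates.
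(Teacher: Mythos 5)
Your outline follows the same route as the paper's proof: reduce to the singular lines $t=(2k+1)\pi/2$, rewrite $\langle\varphi_\lambda,\theta\rangle$ as the evaluation of an infinite-order differential operator $\D(\check Z)$ of the form \eqref{sect6-eq2} on $Z\mapsto e^{i\lambda Z}$, then apply Lemma \ref{sect6-lem1} together with Theorem \ref{sect2-thm1} and Proposition \ref{sect2-prop1}. Two technical details are nevertheless off. The identity you display is incorrect: $\exp(i(x-\lambda)^2/v)=\sum_{n\geq 0}\frac{1}{n!}(i/v)^n(x-\lambda)^{2n}$ with no extra prefactor $e^{ix^2/v}$; the Gaussian factor $e^{ix^2(\mathrm{cotan}\,t)/2}$ from \eqref{sect5-eq10} was already absorbed into the test function $\xi$ before the change of variables, so it cannot reappear as a factor to be ``absorbed into the coefficients.'' This also invalidates your claim $p=1$: with $A_\kappa(\check Z,W)=\frac{(2i)^\kappa(\check Z+iW)^{2\kappa}}{\prod_{\ell=1}^\kappa(2(\kappa-\ell)+1)}$ and $\prod_{\ell=1}^\kappa(2(\kappa-\ell)+1)=(2\kappa)!/(2^\kappa\kappa!)$, one finds $\sup_{\kappa}|A_\kappa(\check Z,W)|\asymp\exp\big(O(|\check Z|^2+|W|^2)\big)$, so the estimate \eqref{sect6-eq3} holds with $p=\check p=2$, not $p=1$. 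Moreover the regularization of the Fresnel integral over $v\in\R$ (Remark \ref{sect4-rem1}) splits it into two half-line contributions, and your single integral $\int_{\R^2}\cdots dv\,dx$ suppresses this; the paper correspondingly needs the pair of operators $\D(\check Z)$ and $\widetilde\D(\check Z)$, both acting against the distribution $v_+^{-1/2}$ on $[0,+\infty[$.

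The more serious gap is exactly the one you flag at the end but leave unresolved: you never say in what sense the object $\sum_\kappa \alpha_\kappa(\check Z)(f)\,(d/dv)^\kappa\cdot v_+^{-1/2}$ exists, nor why convergence of $\{F_N(\cdot,a)\}_N$ in $A_1(\C)$ forces convergence of these objects. This is the entire point of the proposition and the reason the statement must be phrased for hyperfunctions rather than distributions, given that Proposition \ref{sect5-prop2} already handles $\mathscr U$. The resolution in the paper is that the coefficient bounds furnished by Lemma \ref{sect6-lem1} make $\D(\check Z)(f)$ a local operator (infinite-order differential operator) in the sense of hyperfunction theory, that $v_+^{-1/2}$ is a Fourier hyperfunction on $\R$, and that the composition of a local operator with a Fourier hyperfunction is a well-defined hyperfunction by \cite[Proposition 8.4.8 and Exercise 8.4.5]{kanbook}, depending continuously on the entry $f\in A_1(\C)$. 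Without this identification of the target space there is nothing across which the pairings $\langle\varphi_{1-2j/N},\theta\rangle$ can be summed and passed to the limit near $t=\pi/2$, and the convergence claim is unsupported.
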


\begin{proof} Let $\theta \in \mathscr D(\R^2_{t,x},\C)$ with support
a small neighborhood $V$ of the point $(\pi/2,x_0)$ ($x_0\in \R$) and
$\widetilde \theta$ the test-function with support $V-(\pi/2,0) \ni (0,x_0)$ that corresponds to it through the successive transformations explicited previously. One has for any $\lambda \in \R$,
\begin{multline}\label{sect6-eq4}
\langle \varphi_\lambda,\theta
\rangle = \int_\R \int_0^\infty
\Big[\Big(\exp \Big( \frac{i}{v}(\check Z -\lambda)^2\Big)\Big]_{\check Z = x}\, \frac{\widetilde \theta(v,x)}{\sqrt v}\, dv\, dx \\
- i \int_\R \int_0^\infty  \Big[\Big(\exp \Big( -\frac{i}{v}(\check Z -\lambda)^2\Big)\Big]_{\check Z = x}\, \frac{\widetilde \theta(-v,x)}{\sqrt v}\, dv\, dx \\
 \\
= \int_\R \int_0^\infty
\Big(
\Big[\sum\limits_{\kappa =0}^\infty
\frac{i^\kappa}{\kappa!}\, \frac{(\check Z- \lambda)^{2\kappa}}{v^{1/2 + \kappa}}\Big]_{\check Z =x}
\, \widetilde \theta(v,x) - i
\Big[\sum\limits_{\kappa =0}^\infty
\frac{(-i)^\kappa}{\kappa!}\, \frac{(\check Z- \lambda)^{2\kappa}}{v^{1/2 + \kappa}}\Big]_{\check Z =x}
\, \widetilde \theta(-v,x)\Big)\, dv\, dx.
\end{multline}
For any $\kappa \in \N$, the distribution $v_+^{-1/2-\kappa}
\in \mathscr D'([0,+\infty[,\R)$ can be expressed as
$$
v_+^{-1/2-\kappa} = \frac{2^\kappa}{\prod_{\ell=1}^\kappa
\big( 2(\kappa -\ell) +1\big)}\, (-d/dv)^\kappa (v_+^{-1/2})
$$
in the sense of distributions in $\mathscr D'([0,+\infty[,\R)$.
Then, one can reformulate  formally \eqref{sect6-eq2} as
\begin{multline}\label{sect6-eq5}
\langle \varphi_\lambda,\theta \rangle
=
\sum\limits_{\kappa = 0}^\infty
\frac{(2i)^\kappa}{\kappa! \prod_{\ell=1}^\kappa
\big( 2(\kappa -\ell) +1\big)} \\
\int_{\R}
\Big\langle
\Big[\Big(\check Z + i\frac{d}{dZ}\Big)^{2\kappa}(e^{i\lambda (\cdot)})\Big]_{\check Z =x}(0)\,
\Big(\frac{d}{dv}\Big)^{\kappa} (
v_+^{-1/2}),\, \widetilde \theta(\cdot,x) - i(-1)^\kappa
\widetilde \theta(-\cdot,x) \Big\rangle\, dx.
\end{multline}
Lemma \ref{sect6-lem1} applies to the two operators
\begin{equation}
\begin{split}
& \D(\check Z) = \sum\limits_{\kappa =0}^\infty
\frac{1}{\kappa!}
\, \Big[
\frac{(2i)^\kappa \, (\check Z + i d/dZ)^{2\kappa}}
{\prod_{\ell=1}^\kappa
\big( 2(\kappa -\ell) +1\big)}(\cdot)\Big]_{Z=0}\, (d/dv)^\kappa \\
& \widetilde \D(\check Z) =
\sum\limits_{\kappa =0}^\infty
\frac{1}{\kappa!}
\, \Big[
\frac{(-i)^{\kappa+1} 2^\kappa \, (\check Z + i d/dZ)^{2\kappa}}
{2 \prod_{\ell=1}^\kappa
\big( (\kappa -\ell) +1\big)}(\cdot)\Big]_{Z=0}\, (d/dv)^\kappa
\end{split}
\end{equation}
with $p=\check p =2$.
These two operators act then continuously (locally uniformly with respect to the parameter $\check Z$) from $A_1(\C)$ into the space of infinite order differential operators in $d/dv$ (depending on the parameter $\check Z\in \C$). Such differential operators can be considered as hyperfunctions on $\R_v$ (elements of $\mathcal H(\R_v)$). Since $v_+^{-1/2}$ is a Fourier hyperfunction in the real line $\R$, the two $\mathcal H(\R)$-valued operators $f\in A_1(\C)
\longmapsto \D(\check Z)(f) \odot v_+^{-1/2}$ and
$f\in A_1(\C) \longmapsto \widetilde \D(\check Z)(f) \odot v_+^{-1/2}$
are well defined (see \cite[Proposition 8.4.8 and Exercise 8.4.5]{kanbook}) and
depend continuously (locally uniformly with respect to $\check Z$)
on the entry $f$ in $A_1(\C)$. Proposition \ref{sect6-prop1} follows then from Theorem \ref{sect2-thm1} and from the expression \eqref{sect6-eq4} (together with its formal reformulation \eqref{sect6-eq5}) for the evaluations $\langle \varphi_\lambda,\theta\rangle$ when $\lambda \in \R$ and $\varphi_\lambda$ is considered as an element in $\mathscr D'(\mathscr T,\C)$ (acting on $\theta \in \mathscr D(\mathscr T,\C)$) which can be also interpreted an a hyperfunction on $\mathscr T$.

\end{proof}

\end{document}